\newtheorem{defin}{}
\newtheorem{saetze}[defin]{}
\newtheorem{conjec}[defin]{}
\newtheorem{lemmas}[defin]{}
\newtheorem{remarc}[defin]{}
\newtheorem{folger}[defin]{}
\newenvironment{theorem}  {\begin{saetze}\it {\bf Theorem:}}{\end{saetze}}
\newenvironment{conjecture}{\begin{conjec}\it {\bf Conjecture:}}{\end{conjec}}
\newenvironment{lemma}    {\begin{lemmas}\it {\bf Lemma:}}{\end{lemmas}}
\newenvironment{corollary}{\begin{folger}\it {\bf Corollary:}}{\end{folger}}
\newenvironment{remark}   {\begin{remarc}\it {\bf Remark:}}{\end{remarc}}
\newenvironment{proof}    {{\it Proof}:}{{\hfill \fillbox \bigskip}}
\newenvironment{items}{\begin{list}{$\alph{item})$}
{\labelwidth18pt \leftmargin18pt \topsep3pt \itemsep1pt \parsep0pt}}
{\end{list}}
\newcommand{\fillbox}{\mbox{$\bullet$}}
\newcommand{\A}{\mathcal A}    
\renewcommand{\L}{\mathcal{L}} 
\newcommand{\C}{\mathcal{C}}   
\renewcommand{\O}{\mathcal{O}} 
\newcommand{\cT}{\mathcal{T}}  
\newcommand{\N}{\mathbb{N}}
\newcommand{\Z}{\mathbb{Z}}
\newcommand{\ra}{\rightarrow}
\newcommand{\Ra}{\Rightarrow}
\newcommand{\ms}{\mapsto}
\newcommand{\ol}{\overline}
\newcommand{\ul}{\underline}
\newcommand{\Ext}{\mathrm{Ext}}
\newcommand{\Zeta}{\hat{\zeta}}
\begin{document}

\title {The Quillen category of \\ finite $p$-groups and coclass theory}
\author{Bettina Eick and David J. Green}
\date{2 October 2013}
\maketitle

\begin{abstract}
Coclass theory can be used to define infinite families of finite $p$-groups 
of a fixed coclass. It is conjectured that the groups in one of these 
infinite families all have isomorphic mod-$p$ cohomology rings. Here we 
prove that almost all groups in one of these infinite families have equivalent 
Quillen categories. We also show how the Quillen categories of the groups
in an infinite family are connected to the Quillen category of their
associated infinite pro-$p$-group of finite coclass. 
\end{abstract}

\section{Introduction}

The coclass of a finite $p$-group of order $p^n$ and nilpotency class
$c$ is defined as $n-c$. Leedham-Green and Newman \cite{LGN} proposed the
use of coclass a primary invariant for the classification and 
investigation of finite $p$-groups. This suggestion was highly successful
and has led to various new insights into the structure of finite $p$-groups.
We refer to the book by Leedham-Green and McKay \cite{LGM} for details.

Eick and Leedham-Green \cite{ELG08} introduced the {\em coclass families}:
these are certain infinite families of finite $p$-groups of fixed coclass;
see Section \ref{coclass} for the explicit definition. The groups in a 
coclass family have a similar structure and hence can be treated 
simultaneously in various structural investigations of finite $p$-groups. 
The significance of the coclass families is underlined by the result that 
the infinitely many finite $2$-groups of fixed coclass fall into finitely 
many coclass families and finitely many other groups.

The Quillen category $\A_p(G)$ of a finite group $G$ is the category
whose objects are the elementary abelian $p$-subgroups of $G$ and whose
morphisms are the injective group homomorphisms induced by conjugation 
with elements of $G$. The following is the main result of this paper, 
see Section \ref{equiv} for a proof.

\begin{theorem}
\label{thm:main}
Let $(G_x \mid x \geq 0)$ be a coclass family of finite $p$-groups.
Then there exists $x_0 \in \N$ so that the Quillen categories $\A_p(G_x)$ 
and $\A_p(G_y)$ are equivalent for all $x,y \geq x_0$.
\end{theorem}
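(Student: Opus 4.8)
The plan is to show that the \emph{skeleton} of $\A_p(G_x)$ — the finite list of $G_x$-conjugacy classes of elementary abelian subgroups, together with all Hom-sets between a chosen set of representatives — is independent of $x$ once $x$ is large; this clearly gives the desired equivalence. Since a coclass family has bounded coclass, the $p$-rank of the $G_x$ is bounded, so $\A_p(G_x)$ has a bounded number of objects and morphisms and only finitely many such categories can occur; the real content is to rule out oscillation. By Section \ref{coclass} I may assume that, for $x$ beyond some $x_1$, the group $G_x$ has a normal abelian subgroup $A_x$ with $G_x/A_x\cong Q$ a \emph{fixed} finite $p$-group, where $A_x=T/W_x$ for a fixed uniform group $T\cong\Z_p^d$ carrying the $Q$-action, $(W_x)$ a descending, cofinal chain of $Q$-invariant open subgroups of $T$, and the extension class of $G_x$ over $Q$ the image of one fixed class of $H^2(Q;T)$ (possibly after a twist by a cochain that itself stabilises with $x$; I suppress this). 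Here $T$ is the translation subgroup of the infinite pro-$p$-group $\S$ of finite coclass associated with the family, $Q=\S/T$, and $G_x\cong\S/W_x$ via a map $\pi_x$. Cofinality of $(W_x)$ gives, for each fixed $N$ and all large $x$, $W_x\subseteq p^NT$, hence $G_x/p^NA_x\cong\S/p^NT$; in particular the $\mathbb{F}_p[Q]$-modules $V:=A_x[p]$ and $A_x/p^NA_x$ are eventually constant.

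Next I would classify the elementary abelian subgroups. Given elementary abelian $\bar E\le G_x$, put $D:=\bar E\cap A_x\le V$ and let $C\le Q$ be its image; since $\bar E$ is abelian, $C$ fixes $D$ pointwise, so $D\le V^C$, and $\bar E=D\times\bar E_1$ where $\bar E_1\cong C$ is an elementary abelian complement to $A_x$ in the preimage of $C$ in $G_x$. Such a complement exists exactly when the extension $1\to A_x\to(\text{preimage of }C)\to C\to 1$ splits, and the crucial point is that this is a \emph{stable} condition: its class is the image under $H^2(C;T)\to H^2(C;T/W_x)$ of the fixed class of $1\to T\to\S_C\to C\to 1$, where $\S_C$ is the preimage of $C$ in $\S$, and the standard $p$-adic continuity of the cohomology of the finite group $C$, together with finiteness of the $\Z_p$-module $H^2(C;T)$, shows that this map — and likewise $H^i(C;T)\to H^i(C;T/W_x)$ for $i\le 3$ and for the other relevant subquotients of $A_x$ — is an isomorphism once $x$ is large, because its kernel is the image of multiplication by $p^N$ on a finite group. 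Running this over all (finitely many) subgroups $C\le Q$, and using the stabilised $Q$-action on $V$, produces for all large $x,y$ a bijection between the elementary abelian subgroups of $G_x$ and those of $G_y$ that preserves the pair $(C,D)$ and matches complement data, and is $G$-equivariant, hence descends to conjugacy classes.

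Finally I would check that this bijection carries Hom-sets to Hom-sets compatibly with composition. Writing a conjugating element as $g=a\gamma$ with $a\in A_x$ and $\gamma$ a lift of $\bar\gamma\in Q$, the induced map $\bar E\to\bar E'$ restricts on $D$ to the $Q$-action of $\bar\gamma$, and on the complement $\bar E_1$ to a map governed by the $Q$-action on $A_x$, by the subgroups $(\sigma-1)A_x$ for $\sigma\in Q$, and by the structure constants $\tilde c^{\,p},[\tilde c_1,\tilde c_2]\in A_x$ of lifts of elements of $Q$ — all of which are determined by the stabilised data $\S/p^NT$ and $H^{\le 2}(C;\,\cdot\,)$. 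Hence the transporter sets and centralisers controlling $\mathrm{Hom}_{\A_p(G_x)}(\bar E,\bar E')$, taken modulo the part of $A_x$ invisible to the finite groups involved, agree for $G_x$ and $G_y$, giving the required bijections on morphisms and an isomorphism of skeleta. I expect the main obstacle to be exactly this analysis of the ``mixed'' subgroups (those with $D\neq 1\neq C$) and their fusion: one must verify that the finitely many cohomological obstructions, the finitely many cocycle classes parametrising complements, and the $G_x$-action on them are genuinely \emph{eventually constant} and not merely bounded — which is where the $p$-adic continuity of group cohomology and the cofinality of $(W_x)$ do the work, and where the suppressed twist of the extension class must be shown to stabilise as well.
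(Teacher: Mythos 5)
Your overall strategy --- classify the elementary abelian subgroups of $G_x$ by the pair (image $C$ in the fixed quotient, intersection $D$ with the abelian normal subgroup) together with a complement datum, and then argue that these data and the relevant transporters become independent of $x$ --- is the same skeleton-comparison strategy the paper follows, but two of your key claims are false as stated, and they sit exactly where the real work lies. First, a coclass family is \emph{not} in general a family of quotients $S/W_x$ of the associated pro-$p$-group with extension class pulled back from one fixed class in $H^2(P;T)$: by construction the class of $G_x$ over $P$ is $\rho_x+\eta_x$, where $\eta\in H^3(P,T)$ is an additional twist, and this twist is not a nuisance term that ``stabilises and can be suppressed'' --- it changes which subgroups admit elementary abelian lifts at all. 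In the paper only those $L$ with both $[\rho_L]=0$ and $\eta_L=0$ (the set $\L_\eta$) contribute objects (Lemma \ref{step0}), and in the semidihedral and quaternion families of Section \ref{examp} one has $\L_\eta\subsetneq\L$; a proof that ignores the twist produces the wrong object set. Second, your stabilisation claim that $H^i(C;T)\to H^i(C;T/W_x)$ is an isomorphism for large $x$ is wrong: for $p^r\geq |C|$ this map is injective, but its cokernel is $H^{i+1}(C;T)$ --- this is precisely the paper's splitting theorem, $H^i(C,T/p^rT)\cong H^i(C,T)\oplus H^{i+1}(C,T)$ (Theorem \ref{thm18}). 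In particular, complements to $A_x$ in the preimage of $C$ are parametrised up to conjugacy by $H^1(C;A_x)\cong H^1(C;T)\oplus H^2(C;T)$, not by $H^1(C;T)$, so the bijection you build between subgroup data for $G_x$ and $G_y$ would miscount conjugacy classes (compare the object count $\sum_{L\in\L_\eta}|H^1(L,T)|\,|H^2(L,T)|\,|\O_x(L)|$ in Section \ref{skels}). Making the extra summand and the twist contributions move coherently with $x$ is exactly what the paper's cocycle-level splitting $Z^1=I^1\oplus K^1$, with the complements $K^1$ chosen compatibly under multiplication by $p$ (Lemma \ref{choice}), and the correction maps $\omega_{L,x}$, $\varphi_{L,H,w,x}$ are for.

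Finally, the assertion that the transporter sets and centralisers controlling $\mathrm{Hom}(\bar E,\bar E')$ ``agree for $G_x$ and $G_y$'' is precisely the step you yourself flag as the main obstacle, and no argument is offered for it. In the paper this is where most of the effort goes: the cocycle criterion for conjugation (Theorem \ref{morphx}), the threshold $x_0$ of Lemma \ref{prozeta}, the lifting of conjugating elements of $G_x$ to the pro-$p$-group $S$ (Theorem \ref{shift}), and then an explicit functor $\ol{\A}_p(G_x)\to\ol{\A}_p(G_{x+1})$ built from pushouts of group elements, shown to be bijective on objects and on Hom-sets, which is what actually rules out the oscillation you correctly identify as the danger. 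As it stands, your proposal is a reasonable plan whose object-level claims need the splitting theorem and the $\eta$-twist built in from the start, and whose morphism-level claims are not yet proved; so there is a genuine gap rather than a complete alternative argument.
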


For the proof of Theorem \ref{thm:main} we consider $x \in \N$ large enough
and define a special skeleton $\ol{\A}_p(G_x)$ for the Quillen category of
$G_x$ together with an explicit functor $F : \ol{\A}_p(G_x) \ra
\ol{\A}_p(G_{x+1})$. We then show that this induces an isomorphism of
categories. Hence the categories $\A_p(G_x)$ and $\A_p(G_{x+1})$ are
equivalent.

As a by-product we also define a special skeleton $\ol{\A}_p(S)$ for the
infinite pro-$p$-group $S$ associated to the coclass family $(G_x \mid x
\geq 0)$. For each large enough $x$ we then introduce a functor $F_S :
\ol{\A}_p(G_x) \ra \ol{\A}_p(S)$. This functor $F_S$ is not necessarily
injective or surjective on objects, but nonetheless it exhibits an
interesting link between the Quillen category of an infinite pro-$p$-group
of finite coclass and its associated coclass families.

Given the Quillen category $\A_p(G)$ of a finite $p$-group $G$ and given 
a field $k$ of characteristic $p$, we denote 
\[ \ol{H}^*(G,k) = \lim_{E \in \A_p(G)} H^*(E,k). \]
Restriction induces a natural homomorphism
\[ \phi_G : H^*(G,k) \rightarrow \ol{H}^*(G,k). \]
Quillen \cite[Th. 6.2]{Quillen} proved
that~$\phi_G$ is an \emph{inseparable isogeny}\@. That is,
every homogeneous element of the kernel of 
$\phi_G$ is nilpotent and for every element $s$ in the range of $\phi_G$ 
there exists $n \in \N$ so that $s^{p^n}$ is an element of the image of 
$\phi_G$. Thus $\phi_G$ induces a homeomorphism of the prime ideal spectra of
$H^*(G, k)$ and $\ol{H}^*(G,k)$ and it shows how the Quillen category of 
a finite $p$-group influences its mod-$p$ cohomology ring. Theorem 
\ref{thm:main} has the following immediate corollary.

\begin{corollary}
\label{cor:main}
Let $(G_i \mid i \geq 0)$ be a coclass family of finite $p$-groups.
Then there exists $l \in \N$ so that $\ol{H}^*(G_i,k) \cong \ol{H}^*(G_j,k)$ 
for all $i,j \geq l$.
\end{corollary}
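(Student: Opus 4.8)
The plan is to deduce Corollary~\ref{cor:main} from Theorem~\ref{thm:main} by showing that the graded $k$-algebra $\ol{H}^*(G,k)$ depends, up to isomorphism, only on the Quillen category $\A_p(G)$ — and does so compatibly with equivalences. The underlying mechanism is the elementary fact that a limit is unchanged, up to canonical isomorphism, when the index category is replaced by an equivalent one or the functor by a naturally isomorphic one: if $\Phi\colon\C\to\mathcal{D}$ is an equivalence and $F\colon\mathcal{D}\to\mathcal{E}$ a functor into a category with the relevant limits, then $\lim_{\mathcal{D}}F$ and $\lim_{\C}(F\circ\Phi)$ are canonically isomorphic. Since $\ol{H}^*(G,k)=\lim_{E\in\A_p(G)}H^*(E,k)$, it suffices to identify the two cohomology functors under an equivalence $\A_p(G_x)\simeq\A_p(G_y)$.

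So the first step is to pin down the sense in which $E\mapsto H^*(E,k)$ is a functor of the Quillen category. Each object $E$ of $\A_p(G)$ is an $\mathbb{F}_p$-vector space and each morphism an injective $\mathbb{F}_p$-linear map, so there is a \emph{tautological functor} $\iota_G\colon\A_p(G)\to\mathcal{V}$ into the category $\mathcal{V}$ of finite-dimensional $\mathbb{F}_p$-vector spaces with injective linear maps. The cohomology functor, together with its restriction maps, factors as $\iota_G$ followed by $V\mapsto H^*(V,k)$ on $\mathcal{V}$: indeed $H^*(V,k)$ is generated in degrees $\le 2$, and that part is built functorially from the dual space $\mathrm{Hom}(V,\mathbb{F}_p)$ (Bocksteins included when $p$ is odd), so a restriction map is just the algebra homomorphism determined by the underlying linear map. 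Hence the whole diagram whose limit is $\ol{H}^*(G,k)$ is recovered from the pair $(\A_p(G),\iota_G)$.

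It remains to check that the equivalence furnished by the proof of Theorem~\ref{thm:main} is compatible with the tautological functors, i.e.\ that $\iota_{G_y}\circ\Phi\cong\iota_{G_x}$. That equivalence is obtained in Section~\ref{equiv} from explicit isomorphisms of distinguished skeletons, each assembled from concrete maps between the relevant elementary abelian subgroups of consecutive groups; these maps are group isomorphisms, hence $\mathbb{F}_p$-linear, so $\Phi$ carries the vector-space structure along correctly, and the desired natural isomorphism $\iota_{G_y}\circ\Phi\cong\iota_{G_x}$ can be built from the (rank-preserving) object bijection together with these linear identifications. Feeding this into the limit-invariance statement of the first paragraph yields $\ol{H}^*(G_x,k)\cong\ol{H}^*(G_y,k)$ for all $x,y\ge x_0$, so the corollary holds with $l=x_0$.

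The categorical generalities and the reduction of $H^*(V,k)$ to linear data are routine. The one point requiring real care — and the reason one must consult the proof of Theorem~\ref{thm:main} rather than only its statement — is the compatibility $\iota_{G_y}\circ\Phi\cong\iota_{G_x}$: an abstract equivalence of Quillen categories preserves the rank of each object (the length of a longest chain of non-invertible monomorphisms ending at it) but need not, a priori, retain enough of the $\mathbb{F}_p$-linear structure to match up the cohomology diagrams, so one leans here on the concrete group-theoretic nature of the functor $F$.
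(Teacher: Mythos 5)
Your argument is correct and is essentially the paper's own (implicit) one: the paper states Corollary \ref{cor:main} as an immediate consequence of Theorem \ref{thm:main}, the point being exactly that $\ol{H}^*(G,k)$ is a limit over the Quillen category and limits are invariant under equivalence of the index category once the coefficient functors are identified. Your extra step — checking that the explicit functor of Section \ref{equiv}, via the evident identifications $C_{L,x}(\gamma)\times O \cong C_{L,x+1}(iso(\gamma))\times mul(O)$, gives a natural isomorphism identifying the two cohomology diagrams — is the right (routine, and verifiable from equations (1), (2) and the compatible choices of $\eta_x$, $\omega_{L,x}$, $K^n$) way to fill in what the paper leaves unsaid.
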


Carlson \cite{Car05} proved that the mod-$2$ cohomology rings of the 
finite $2$-groups of a fixed coclass fall into finitely many isomorphism 
types. The following conjecture expands this result to odd primes $p$
and it also strengthens it for prime $2$. Corollary \ref{cor:main} 
can be considered as an indication that supports the following conjecture.

\begin{conjecture}
\label{con:main}
Let $(G_i \mid i \geq 0)$ be a coclass family of finite $p$-groups.
Then there exists $l \in \N$ so that $H^*(G_i,k) \cong H^*(G_j,k)$
holds for all $i,j \geq l$.
\end{conjecture}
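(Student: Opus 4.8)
The plan is to attack the conjecture directly at the level of the full cohomology ring, because it cannot be reduced to Theorem \ref{thm:main}. Quillen's result only tells us that $\phi_G$ is an inseparable isogeny, so $H^*(G,k)$ is recovered from $\A_p(G)$, and hence from $\ol{H}^*(G,k)$, only up to nilpotents and up to $p$-th powers (that is, up to $F$-isomorphism). The data that the Quillen category cannot see --- the nilpotent radical and the fine multiplicative structure --- is exactly what must be pinned down to obtain an honest ring isomorphism. So the equivalence of Quillen categories provided by Theorem \ref{thm:main} can serve only as a consistency check; the argument must instead exploit the uniform group-theoretic structure of a coclass family.

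First I would extract from the construction of coclass families \cite{ELG08} that each member sits in an extension $1 \to A_x \to G_x \to P \to 1$, where $P$ is a fixed finite $p$-group independent of $x$ (the point group of the associated infinite pro-$p$-group $S$, with translation subgroup $T \cong \Z_p^d$), where $A_x$ is a finite abelian $p$-group whose cyclic factors have exponents growing with $x$, and where both the $P$-action on $A_x$ and the extension class of $G_x$ are reductions of fixed $p$-adic data on $T$. To this extension I would attach the Lyndon--Hochschild--Serre spectral sequence $E_2^{s,t} = H^s\!\big(P, H^t(A_x,k)\big) \Ra H^{s+t}(G_x,k)$ and try to prove that, for $x$ large, every ingredient of this spectral sequence is independent of $x$. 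The $E_2$-page is the tractable step: over a field of characteristic $p$ the graded ring $H^*(\Z/p^m,k)$ does not depend on $m$ once $m \geq 2$, so $H^*(A_x,k)$ stabilizes as a graded ring; and the induced $P$-action on $H^1(A_x,k)$ and on the polynomial part of $H^2(A_x,k)$ is the mod-$p$ reduction of the fixed action on $T$, hence also eventually independent of $x$. Combining these, the bigraded $k$-algebra $E_2^{*,*}$ is constant for all sufficiently large $x$.

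The hard part will be everything below the $E_2$-page. The differentials, beginning with the transgressions $H^1(A_x,k) \to H^2(P,k)$ and $H^2(A_x,k) \to H^3(P,k)$, are governed not by the mod-$p$ ring $H^*(A_x,k)$ but by the integral extension data of $G_x$ --- in particular by the higher Bocksteins attached to the cyclic factors of $A_x$, whose behaviour reflects the precise $p$-power exponents. These are exactly the quantities that vary with $x$, and the crux is to show that, once the exponents exceed the range of cohomological degrees under consideration, the transgressions and all higher differentials become $x$-independent; I would try to establish this by expressing each differential through the fixed class of the $p$-adic extension $1 \to T \to S \to P \to 1$ together with a Bockstein operation that eventually ceases to feel the exponent. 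Even granting a stable $E_\infty$-page, one further obstacle remains: $E_\infty$ determines $H^*(G_x,k)$ only as an associated graded ring, so I would still have to show that the multiplicative extension problem resolving the filtration is eventually $x$-independent.

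It is precisely this last layer of structure, invisible to $\A_p(G_x)$, that makes the conjecture strictly stronger than Corollary \ref{cor:main}, and controlling it uniformly in $x$ is where I expect the real difficulty to lie. For $p=2$ one extra tool is available: Carlson's theorem \cite{Car05} bounds the number of isomorphism types of mod-$2$ cohomology ring within a fixed coclass, so there the task reduces to ruling out oscillation among finitely many types as $x$ grows --- a rigidity argument rather than a classification. For odd $p$ no such finiteness is yet known, which is an additional hurdle that the spectral-sequence approach above would first have to supply.
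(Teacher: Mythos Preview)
The statement is Conjecture~\ref{con:main}, and the paper does not prove it: it is stated explicitly as an open conjecture, with Corollary~\ref{cor:main} offered only as supporting evidence. There is therefore no proof in the paper to compare your proposal against.

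Your proposal is not a proof either, and you are candid about this. You correctly identify the two substantive obstacles --- controlling the differentials of the Lyndon--Hochschild--Serre spectral sequence beyond the $E_2$-page, and resolving the multiplicative extension problem at $E_\infty$ --- and you do not claim to have overcome either one. As a diagnosis of where the difficulty lies your analysis is sound: the stabilization of $E_2^{*,*}$ is indeed routine for the reasons you give, whereas making the transgressions, the higher differentials, and the filtration extensions uniformly $x$-independent is precisely the content of the conjecture and remains open. Your observation that Theorem~\ref{thm:main} cannot be upgraded to the conjecture, because Quillen's inseparable isogeny determines $H^*(G,k)$ from $\A_p(G)$ only up to $F$-isomorphism, is also correct and is exactly why the paper stops at Corollary~\ref{cor:main}. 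In short, what you have written is a reasonable research plan for attacking an open problem, not a proof; and since the paper contains no proof of this conjecture, there is nothing further to compare.
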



\section{Cohomology, extensions and complements}
\label{cohext}

In this section we recall some of the well-known results about
cohomology groups and their applications in group theory. We refer
to \cite{Benson:I} and \cite{Benson:II} for background and proofs.

Let $G$ be a group acting on the module $M$. A normalised $n$-cochain of
$G$ to $M$ is a map $\gamma : G \times \ldots \times G \ra M$ satisfying
that $\gamma(g_1, \ldots, g_n) = 0$ if $g_i = 1$ for some $i \in \{1, \ldots,
n\}$. Let $C^n(G,M)$ denote the set of all normalised $n$-cochains and
define

\[ \Delta_n : C^n(G,M) \ra C^{n+1}(G,M) : \gamma \ms \Delta_n(\gamma) 
   \;\;\; \mbox{ with } \]
\begin{eqnarray*}
&& \Delta_n(\gamma)(g_1, \ldots, g_{n+1}) := \\
&& \hspace{1cm} \gamma(g_2, \ldots, g_{n+1}) +
   \sum_{i=1}^n (-1)^i \gamma(g_1, \ldots, g_i g_{i+1}, \ldots, g_n) 
   + (-1)^{n+1} \gamma(g_1, \ldots, g_n)^{g_{n+1}}. 
\end{eqnarray*}

Let $Z^n(G,M)$ denote the kernel of $\Delta_n$ and let $B^n(G,M)$ be the
image of $\Delta_{n-1}$. An element of $Z^n(G,M)$ is called an $n$-th
cocycle and an element of $B^n(G,M)$ is an $n$-th coboundary. Further,
the set $Z^n(G,M)$ is an abelian group with subgroup $B^n(G,M)$. Their
quotient $H^n(G,M)$ is the $n$-th cohomology group. The coset $\gamma + 
B^n(G,M)$ for $\gamma \in Z^n(G,M)$ is also denoted by $[\gamma]$.

\subsection{Extensions}

We recall the well-known connection between the 2-dimensional cohomology
of groups and extensions. Let $G$ be a finite group acting on a module $M$. 
An extension of $G$ by $M$ is a group $E$ having a normal subgroup $E_M
\cong M$ with $E/E_M \cong G$. Equivalently, the group $E$ satisfies a 
short exact sequence $0 \ra M \ra E \ra G \ra 1$.

For $\tau \in Z^2(G,M)$ let $\Ext(\tau)$ be the set $G \times M$ equipped
with the multiplication 
\[ (g,m)(h,n) = (gh, m^h + n + \tau(g,h)) \mbox{ for } g,h \in G 
\mbox{ and } m, n \in M. \]
Then $\Ext(\tau)$ is a group and their exists the natural epimorphism 
$\epsilon(\tau) : \Ext(\tau) \ra G : (g,m) \ms g$ with kernel $\{(1,m) 
\mid m \in M \} \cong M$. Thus $\Ext(\tau)$ is an extension of $G$ by $M$.
It is well-known that every extension of $G$ by $M$ is isomorphic to a 
group $\Ext(\tau)$ for some $\tau \in Z^2(G,M)$. 

Let $\tau, \sigma \in Z^2(G,M)$ with $\tau - \sigma = \gamma \in B^2(G,M)$.
Let the coboundary $\gamma$ be induced by the map $\zeta : G \ra M$ with
$\zeta(1) = 0$; that is, let $\gamma(g,h) = \zeta(g)^h + \zeta(h) - \zeta(gh)$
for $g, h \in G$. Then $\Ext(\tau)$ is isomorphic to $\Ext(\sigma)$ via
\[ \Ext(\tau) \ra \Ext(\sigma) : (g,m) \ms (g, m+\zeta(g)).\]

Hence we obtain that up to group isomorphism every extension of $G$ by $M$ 
is defined by some element $[\gamma] \in H^2(G,M)$. Note that different 
elements of $H^2(G,M)$ can define isomorphic extension groups so that the
isomorphism problem for extension groups is not completely solved by this
construction.

\subsection{Complements}
\label{compl}

We recall the well-known connection between the 1-dimensional cohomology
of groups and complements.
Let $G$ be a finite group acting on a module $M$, let $\tau \in Z^2(G,M)$
and denote $E = \Ext(\tau)$ and $\epsilon = \epsilon(\tau)$. Further, we
identify $M$ with $\{(1,m) \mid m \in M\}$. Then there exists a complement 
to $M$ in $E$ if and only if $\tau \in B^2(G,M)$. If a complement $C$ 
exists, then it is isomorphic to $G$ via $\epsilon$. Let 
$c(g) \in C$ with $\epsilon(c(g)) = g$. Then
\[ C = \{ c(g) \mid g \in G\} \mbox{ and } c(g) = (g,m_g) \mbox{ for some }
m_g \in M. \]

As $C$ is a subgroup of $E(\tau)$, it follows that $c(gh) = c(g) c(h)$ holds,
or, equivalently, $\tau(g,h) = m_{gh} - m_g^h - m_h$. Thus given $C$ and an
element $\delta \in Z^1(G,M)$, it follows that
\[ C(\delta) = \{ (g, m_g + \delta(g)) \mid g \in G \} \]
is another complement to $M$ in $E$. It is well-know that every complement 
to $M$ in $E$ is of the form $C(\delta)$ for some $\delta \in Z^1(G,M)$ and
hence there is a one-to-one correspondence between the elements of $Z^1(G,M)$ 
and the set of complements to $M$ in $E$. 

Let $\delta, \rho \in Z^1(G,M)$. Then $C(\delta)$ and $C(\rho)$ are 
conjugate via $m \in M$ if and only if $\delta - \rho = \gamma \in B^1(G,M)$
and $\gamma$ is induced by $m$ via $\gamma(g) = m^g - m$. In this case
we thus obtain that 
\[ (g, m_g + \delta(g))^{(1,m)} = (g, m_g + \rho(g)). \]

\subsection{An action}

For $g \in G$ and $\gamma \in Z^n(G, M)$ we denote
\[ \gamma^g (g_1, \ldots, g_n) = \gamma(g_1^g, \ldots, g_n^g)^{g^{-1}}.\]
It is technical, but straightforward to observe that $\gamma^g \in Z^n(G, M)$ 
for each $g \in G$ and each $\gamma \in Z^n(G, M)$. Further, this notation 
defines a {\em left} action of $G$ on $Z^n(G, M)$. Moreover, if $\gamma \in 
B^n(G, M)$, then $\gamma^g \in B^n(G, M)$ follows. More precisely, the group
$G$ acts on $Z^n(G, M)$ via $\gamma \ms \gamma^{g^{-1}}$. This action leaves 
$B^n(G, M)$ setwise invariant and hence induces an action of $G$ on 
$H^n(G, M)$.

\subsection{Mappings}

Throughout this paper we will consider various maps on groups of cocycles 
and in this section we want to establish some notation.

If $L \leq G$, then the restriction from $G$ to $L$ induces the homomorphism
\[ res_L : Z^n(G, M) \ra Z^n(L,M) : \gamma \ms \gamma_L.\]
This maps $B^n(G,M)$ onto $B^n(L,M)$ and hence induces a homomorphism 
$H^n(G,M) \ra H^n(L, M)$. Note that restriction is compatible with the
action of $G$ on cocycles.

If $G$ acts on two modules $N$ and $M$ and $hom : M \ra N$ is a $G$-module
homomorphism, then we denote with the same name $hom : Z^n(G, M) \ra 
Z^n(G, N)$ the induced homomorphism on cocycles. The induced $hom$ is 
compatible with the action of $G$ on cocycles and maps $B^n(G, M)$ into 
$B^n(G, N)$. Hence it induces a homomorphism $H^n(G, M) \ra H^n(G, N)$
which we also denote with $hom$. We use this general concept in 
various cases.

First, if $N \leq M$, then the projection $pro_{M/N} : M \ra M/N$ is a 
$G$-module homomorphism. Hence we obtain
\[ pro_{M/N} : Z^n(G, M) \ra Z^n(G, M/N).\]

Projection is not necessarily injective or surjective, but it maps 
the group of coboundaries $B^n(G,M)$ onto $B^n(G,M/N)$.

Next, if $N \leq M$, then the inclusion $inc_N : N \ra M$ is a
$G$-module homomorphism. It yields
\[ inc_N : Z^n(G, N) \ra Z^n(G, M). \]

Inclusion is injective, but not necessarily surjective. It maps $B^n(G,N)$
into $B^n(G, M)$, but this restriction to coboundaries is also not 
necessarily surjective.

Finally, consider $T \cong \Z_p^d$ where $\Z_p$ denotes the $p$-adic integers
and suppose that this a $G$-module. Then division by $p^l$ yields an 
$G$-module isomorphism $div_l : p^l T \ra T$ which induces an isomorphism 
\[ div_l : Z^n(G, p^l T) \ra Z^n(G, T).\]
Similarly, for every $k \geq l$ the mapping $div_l$ yields an isomorphism 
$div_l : p^l T / p^k T \ra T / p^{k-l} T$ and this induces an isomorphism 
\[ div_l : Z^n(G, p^lT/p^kT) \ra Z^n(G, T/p^{k-l}T).\]

As $div_l$ is an isomorphism, it maps the group of coboundaries to the
group of coboundaries of the image and thus also induces an isomorphism
of the corresponding cohomology groups.

\section{Infinite pro-$p$-groups of finite coclass}
\label{infinite}

Let $S$ be an infinite pro-$p$-group of finite coclass $r$. In this section
we investigate the Quillen category $\A_p(S)$. As a preliminary step, we
first recall the well-known structure of $S$. We refer to \cite{LGM} for
background and proofs. 

As shown in \cite[7.4.13]{LGM}, the group $S$ is a so-called uniserial
$p$-adic pre-space group of some dimension $d$. This asserts that there
exists a normal subgroup $T$ in $S$ with $T \cong \Z_p^d$, where $\Z_p$
denotes the $p$-adic integers, and $S/T$ is a finite $p$-group of coclass 
$r$. Further, the series defined by $T_0 := T$ and $T_{i+1} = [T_i, S]$ 
satisfies $[T_i : T_{i+1}] = p$ for $i \in \N_0$. This implies that if
$N$ is an arbitrary $S$-normal subgroup of $T$, then $N = T_i$ for some
$i \in N_0$. In particular, $p T_i = T_{i+d}$ for every $i \in \N_0$. 

A subgroup $T$ with these properties is called a {\em translation subgroup} 
of the infinite pro-$p$-group $S$. Note that  the translation subgroup of 
$S$ is not unique; In fact, if $T$ is a translation subgroup of $S$, then 
every of its subgroups $T_i$ is also a translation subgroup of $S$. Hence 
there are infinitely many translation subgroups for $S$. If $T$ is a 
translation subgroup of $S$ and $S/T$ has nilpotency class $\ell-1$, then 
the $\ell$-th lower central series subgroup $\gamma_\ell(S)$ satisfies 
$\gamma_\ell(S) \leq T$ and thus every subgroup $\gamma_{\ell+k}(S)$ for 
$k \in \N_0$ is a possible translation subgroup of $S$. We thus can choose 
the translation subgroup of $S$ as large enough subgroup of the lower 
central series of $S$.

We now fix $T$ as $\gamma_\ell(S)$ for some large enough $\ell$ so that
$T$ is a translation subgroup of $S$. Define $P = S/T$ and choose $\rho
\in Z^2(P,T)$ with $\Ext(\rho) \cong S$. In the following, we identify $S$ 
with $\Ext(\rho)$ and we denote with $\epsilon = \epsilon(\rho)$ the 
natural projection $S \ra S/T = P$ associated with the extension structure
of $S$.

We now use this setup to investigate the Quillen category $\A_p(S)$.

\subsection{The objects of $\A_p(S)$}

The objects of $\A_p(S)$ are the elementary abelian subgroups of $S$. In 
this section we aim for an alternative, more explicit description of these 
objects.

For $L \leq P$ let $\ol{L} \leq S$ be the full preimage under of $L$ under
$\epsilon$. We denote
\begin{eqnarray*}
\L 
&=& \{ L \leq P \mid L \mbox{ elementary abelian with } \rho_L \in B^2(L,T)\} \\
&=& \{ L \leq P \mid L \mbox{ elementary abelian with } [\rho_L] = 0 \} \\
&=& \{ L \leq P \mid L \mbox{ elementary abelian, and }  
       \ol{L} \mbox{ splits over } T \}
\end{eqnarray*}
For each $L \in \L$ we choose a fixed complement $C_L$ to $T$ in $\ol{L}$.
This has the form $C_L = \{ c_L(l) \mid l \in L \}$ with $c_L(l) = (l, 
t_L(l))$ for some $t_L(l) \in T$. Note that $c_L(lh) = c_L(l) c_L(h)$, or, 
equivalently, $\rho(l,h) = t_L(lh) - t_L(l)^h - t_L(h)$ holds. In other words,
$\rho = -\Delta_1(t_L)$. 

For $\delta \in Z^1(L,T)$ we then denote $C_L(\delta) = \{ (l, t_L(l) + 
\delta(l)) \mid l \in L \}$ and we recall that this is also a complement
to $T$ in $\ol{L}$. Further, let
\[\C(L) = \{ C_L(\delta) \mid \delta \in Z^1(L,T) \}. \]
Then $\C(L)$ is the set of all complements to $T$ in $\ol{L}$. We now
obtain the following description of the objects in $\A_p(S)$.

\begin{theorem}
The objects in the Quillen category $\A_p(S)$ are the disjoint union
of the sets $\C(L)$ for $L \in \L$.
\end{theorem}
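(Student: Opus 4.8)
The plan is to classify the elementary abelian subgroups of $S$ by pushing them down to $P$ via $\epsilon$ and then invoking the complement theory of Section \ref{compl}. The one structural fact that makes this work is that the translation subgroup $T \cong \Z_p^d$ is torsion-free, so it contains no element of order $p$.

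First I would take an arbitrary elementary abelian subgroup $E \leq S$ and observe that $E \cap T = 1$: every non-trivial element of $E$ has order $p$, whereas $T$ has no such elements. Hence $\epsilon$ restricts to an injective homomorphism on $E$, so $L := \epsilon(E)$ is isomorphic to $E$, in particular elementary abelian, and $E$ is contained in the full preimage $\ol{L}$. Since $\epsilon(\ol{L}) = L = \epsilon(E)$ and the kernel of $\epsilon$ restricted to $\ol{L}$ is exactly $T$, it follows that $\ol{L} = ET$ with $E \cap T = 1$; that is, $E$ is a complement to $T$ in $\ol{L}$. In particular $\ol{L}$ splits over $T$, so $L \in \L$, and by the description of all complements recalled in Section \ref{compl} we get $E \in \C(L)$. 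Conversely, for any $L \in \L$ and any $C \in \C(L)$, the map $\epsilon$ restricts to an isomorphism $C \to L$, so $C$ is elementary abelian and hence an object of $\A_p(S)$. This gives the two inclusions.

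It then remains to check that the union is disjoint, which is immediate: for $C = C_L(\delta) \in \C(L)$ one reads off directly from the formula $C = \{(l, t_L(l)+\delta(l)) \mid l \in L\}$ that $\epsilon(C) = L$, so the same subgroup $C$ cannot occur in $\C(L')$ for any $L' \neq L$. I do not expect a genuine obstacle in this argument; the only non-formal ingredient is the torsion-freeness of $T$, and once $E \cap T = 1$ is in hand the statement reduces entirely to the bijection between complements to $T$ in $\ol{L}$ and elements of $Z^1(L,T)$ established earlier.
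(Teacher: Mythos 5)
Your argument is correct and follows the paper's proof essentially verbatim: torsion-freeness of $T$ gives $E \cap T = 1$, so $E$ is a complement to $T$ in $\ol{L}$ for $L = ET/T \in \L$, and the complement theory of Section \ref{compl} identifies $E$ with some $C_L(\delta)$; the converse and the disjointness are handled just as the paper intends. No discrepancies to report.
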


\begin{proof}
Let $E$ be an object in $\A_p(S)$. Then $E$ is an elementary abelian
subgroup of $S$. Thus $E \cap T = \{1\}$, as $T$ is torsion free. Let
$L = ET/T$. Then $L \in \L$, as $L \cong E$ and $E$ is a complement
to $T$ in $\ol{L}$. Thus $E = C_L( \delta )$ for some $\delta \in 
Z^1(L,T)$, as every complement to $T$ in $\ol{L}$ has this form. 
The converse is obvious.
\end{proof}

\subsection{The morphisms of $\A_p(S)$}

Each morphism of $\A_p(S)$ is induced by conjugation with an element of $S$. 
More precisely, given two objects $C_L(\delta)$ and $C_H(\sigma)$ in 
$\A_p(S)$ and an element $g \in S$, then $g$ induces a morphism $\mu_g : 
C_L( \gamma) \ra C_H( \sigma)$ if and only if $C_L( \gamma)^g \leq 
C_H( \sigma)$ holds. Our next aim is to given an alternative characterisation
for this setting.

For $L, H \in \L$ and $g \in S$ with $L^g \leq H$ we define
\[ \zeta_{L,H,g} : L \ra S : l \ms c_H(l^g)^{-g^{-1}} \cdot c_L(l) \]
and for $\sigma \in Z^1(H, T)$ we denote
\[ \sigma^g_L : L \ra T : l \ms \sigma(l^g)^{g^{-1}} \in Z^1(L,T).\]

\begin{theorem} \label{morph}
Let $C_L( \gamma)$ and $C_H( \sigma)$ are two objects in $\A_p(S)$ and
let $g \in S$. Then $C_L( \gamma)^g \leq C_H( \sigma)$ if and only if 
$L^g \leq H$ and $\sigma^g_L -  \gamma = \zeta_{L,H,g}$.
\end{theorem}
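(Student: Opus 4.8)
The plan is to unwind the condition $C_L(\gamma)^g \le C_H(\sigma)$ elementwise using the explicit extension model $S = \Ext(\rho)$. First I would observe that, since $C_L(\gamma)$ is generated by the elements $(l, t_L(l)+\gamma(l))$ for $l \in L$, conjugating a typical such element by $g$ and projecting to $P$ gives $l^g$; hence $C_L(\gamma)^g \le C_H(\sigma)$ forces $L^g \le H$, which disposes of one half of the claimed equivalence and also shows it is a necessary condition in the other direction. So from now on assume $L^g \le H$, and the task reduces to comparing, for each $l \in L$, the conjugate $(l, t_L(l)+\gamma(l))^g \in S$ with the unique element of $C_H(\sigma)$ lying over $l^g$, namely $(l^g, t_H(l^g) + \sigma(l^g))$.

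The key computation is to express $(l, t_L(l)+\gamma(l))^g$ in the coordinates $(l^g, \,?\,)$ and read off the $T$-component. Writing $c_L(l) = (l, t_L(l))$ and recalling that $c_L(l)\cdot(1,\gamma(l)) = (l, t_L(l)+\gamma(l))$, I would conjugate $c_L(l)$ by $g$ first: by definition of $\zeta_{L,H,g}$ we have $c_H(l^g)^{-g^{-1}} \cdot c_L(l) = \zeta_{L,H,g}(l)$, equivalently $c_L(l)^g = \bigl(c_H(l^g)\bigr)^{?}\cdot(\text{correction})$ — more precisely $c_L(l)^{g^{-1}\cdot}$ unwinds so that $c_L(l)^{g}$ differs from $c_H(l^g)$ by the $T$-valued quantity $\zeta_{L,H,g}(l)$ transported appropriately. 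Meanwhile $(1,\gamma(l))^g = (1, \gamma(l)^{g^{-1}})$ in the extension (using that $T$ is normal and the conjugation action on $T \cong \{(1,t)\}$ is the module action twisted by $g$), which by the definition of $\gamma^g_L$ contributes the term governed by $\gamma^g_L$. Assembling these, the $T$-component of $(l,t_L(l)+\gamma(l))^g$ over $l^g$ equals $t_H(l^g) + \gamma^g_L(l) - \zeta_{L,H,g}(l)$ up to a coboundary bookkeeping term, and containment in $C_H(\sigma)$ is exactly the assertion that this equals $t_H(l^g) + \sigma(l^g)$ for all $l$. Cancelling $t_H(l^g)$ and reindexing $\sigma(l^g)$ as $\sigma^g_L(l)$ gives $\sigma^g_L - \gamma = \zeta_{L,H,g}$, as claimed. (I would double-check whether the displayed statement should read $\gamma^g_L$ rather than $\gamma$ on the left — the paper writes $\gamma$, so presumably $g$ acts trivially in the relevant sense or the convention is that $\gamma$ is already pushed forward; I would reconcile signs and the $g$-twist carefully here.)

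The main obstacle is the careful sign and conjugation-order bookkeeping: the action of $g \in S$ on elements of $S = \Ext(\rho)$ mixes the $P$-component shift $l \mapsto l^g$ with the $T$-module twist $t \mapsto t^{g^{-1}}$ and with the $\rho$-cocycle correction terms, and one must verify that the three "error" contributions — from moving $c_L(l)$ past $g$, from the twist on $\gamma(l)$, and from the multiplicativity defect of $l \mapsto (l, t_L(l)+\gamma(l))$ — combine cleanly into precisely $\zeta_{L,H,g}(l)$ with the stated sign. It is worth checking that $\zeta_{L,H,g}$ and $\sigma^g_L$, asserted in the setup to be a $1$-cochain into $T$ and an element of $Z^1(L,T)$ respectively, genuinely land where claimed; this is the "technical but straightforward" verification that the displayed definitions are well posed, and it is the natural place for an off-by-a-sign error to hide. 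Once the elementwise identity is established for all $l \in L$, the equivalence follows immediately since $C_L(\gamma)$ is generated by these elements and $C_H(\sigma)$ is determined by its $T$-components over $H$.
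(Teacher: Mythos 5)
Your overall strategy is exactly the paper's: identify $S$ with $\Ext(\rho)$, note that projecting to $P$ forces $L^g \leq H$, and then compare $(l, t_L(l)+\gamma(l))^g$ elementwise with the unique element $(l^g, t_H(l^g)+\sigma(l^g))$ of $C_H(\sigma)$ over $l^g$. But the execution has a genuine gap precisely at the step you flag as needing reconciliation, and as written it does not close. First, the twist goes the wrong way: in $\Ext(\rho)$, conjugation by $g=(w,t)$ acts on the fibre $\{(1,m)\mid m\in T\}$ as the module action of $w=\epsilon(g)$, so $(1,\gamma(l))^g=(1,\gamma(l)^g)$, not $(1,\gamma(l)^{g^{-1}})$. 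Second, your assembled identity, that the $T$-component over $l^g$ equals $t_H(l^g)+\gamma^g_L(l)-\zeta_{L,H,g}(l)$ ``up to a coboundary bookkeeping term,'' is not correct: $\gamma^g_L$ is not even meaningful here (the construction $\sigma \mapsto \sigma^g_L$ turns cochains on $H$ into cochains on $L$, while $\gamma$ already lives on $L$), the sign on $\zeta$ is off, and no coboundary correction occurs -- the identity is exact. The correct bookkeeping is: conjugating the defining equation $\zeta_{L,H,g}(l)=c_H(l^g)^{-g^{-1}}c_L(l)$ by $g$ gives $c_L(l)^g=c_H(l^g)\cdot\zeta_{L,H,g}(l)^g$, hence $(l,t_L(l)+\gamma(l))^g=c_L(l)^g\,(1,\gamma(l)^g)$ has $T$-component $t_H(l^g)+(\zeta_{L,H,g}(l)+\gamma(l))^g$ over $l^g$; membership in $C_H(\sigma)$ says this equals $t_H(l^g)+\sigma(l^g)$, and applying $g^{-1}$ yields $\zeta_{L,H,g}(l)=\sigma(l^g)^{g^{-1}}-\gamma(l)=\sigma^g_L(l)-\gamma(l)$. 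From the formula you actually wrote down, no amount of ``cancelling and reindexing'' produces the stated equation.

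This also settles the question you explicitly leave open: the untwisted $\gamma$ in the statement is correct, not because ``$g$ acts trivially in the relevant sense,'' but because the final conjugation by $g^{-1}$ -- which is built into the definitions of both $\zeta_{L,H,g}$ and $\sigma^g_L$ -- undoes the twist $\gamma(l)\mapsto\gamma(l)^g$ acquired when conjugating $(1,\gamma(l))$. That is precisely how the paper argues: it equates $c_L(l)^g(1,\gamma(l)^g)=c_H(l^g)(1,\sigma(l^g))$, rearranges to $c_H(l^g)^{-1}c_L(l)^g=(1,\sigma(l^g)-\gamma(l)^g)$, and then conjugates both sides with $g^{-1}$. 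Your identification of $L^g\leq H$ in the forward direction and the remark that the converse is the same computation read backwards are fine and match the paper; but until the direction of the fibre action and the role of the final $g^{-1}$ are pinned down, the proposal does not establish the theorem, and its central displayed identity is inconsistent with it.
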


\begin{proof}
We use the identification $S = \Ext(\rho)$ with the natural epimorphism 
$\epsilon : S \ra P : (w, t) \ms w$.

First suppose that $C_L( \gamma)^g \leq C_H( \sigma)$. Then this conjugation
is compatible with $\epsilon$ and thus $L^g \leq H$ follows directly. Further,
the elements of $C_L(\gamma)$ are 
$(l, t_L(l) + \gamma(l))$ for $l \in L$
and the elements of $C_H(\sigma)$ are 
$(h, t_H(h) + \sigma(h))$ for $h \in H$. 
The element $(l, t_L(l) + \gamma(l))^g$ maps onto $l^g$ under $\epsilon$ 
and is an element in $C_H(\sigma)$ by assumption. Hence
$(l, t_L(l) + \gamma(l))^g = (l^g, t_H(l^g) + \sigma(l^g))$. 
Recall that $c_L(l) = (l, t_L(l))$ and $c_H(h) = (h, t_H(h))$. Thus 
$c_L(l)^g (1, \gamma(l)^g) 
  = (c_L(l) (1, \gamma(l))^g 
  = (l, t_L(l)+\gamma(l))^g 
  = (l^g, t_H(l^g) + \sigma(l^g))
  = c_H(l^g) (1, \sigma(l^g))$. 
This translates to
$c_H(l^g)^{-1} c_L(l)^g = (1, \sigma(l^g) - \gamma(l)^g)$.
Conjugating the right and left hand side of this equation with $g^{-1}$ 
yields $\zeta_{L,H,g}(l) = \sigma(l^g)^{g^{-1}} - \gamma(l)$. 

The converse follows with similar arguments.
\end{proof}

We investigate $\zeta_{L,H,g}$ in more detail as preparation for our
later sections. For $L, H \in \L$ and $w \in P$ let
\[ \Zeta_{L,H,w} : L \ra T : l \ms 
      (t_L(l)^w -t_H(l^w) + \rho(l,w) - \rho(w, l^w))^{w^{-1}} \]
and for $t \in T$ let $\lambda_t$ be the coboundary in $B^1(L, T)$ 
induced by $t$ via $\lambda_t : L \ra T : l \ms [t, l]$. Then 
$\lambda_t^w = \lambda_{t^{w^{-1}}} \in B^1(L,T)$ as
\[ \lambda_t^w : L \ra T : l \ms [t, l^w]^{w^{-1}} = [t^{w^{-1}}, l]. \] 

\begin{lemma} \label{zeta}
Let $L, H \in \L$ and $g = (w,t) \in S$ with $L^g \leq H$. 
\begin{items}
\item[\rm (a)]
$\zeta_{L,H,g} = \Zeta_{L,H, w} - \lambda_t^w$.
\item[\rm (b)]
if $\zeta_{L, H, g} \in Z^1(L, T)$, then $\zeta_{L, H, h} \in Z^1(L, T)$
for all $h \in gT$.
\item[\rm (c)]
there exist $\gamma$ and $\sigma$ with $C_L(\gamma)^g \leq C_H(\sigma)$
if and only if $\zeta_{L,H,g} \in Z^1(L,T)$.
\item[\rm (d)]
$g$ centralises $C_L(\gamma)$ if and only if $w$ centralises $L$ and
$\zeta_{L,L,g} = \gamma^g - \gamma$.
\end{items}
\end{lemma}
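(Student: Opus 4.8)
The plan is to prove the four statements (a)--(d) by straightforward calculation within the group $\Ext(\rho)$, using the explicit multiplication formula $(g,m)(h,n) = (gh, m^h+n+\tau(g,h))$ and the definitions of $\zeta_{L,H,g}$, $\Zeta_{L,H,w}$ and $\lambda_t^w$. Statement (a) is the computational heart: one writes $g = (w,t)$, expands $c_H(l^g)^{-g^{-1}} \cdot c_L(l)$ using $c_L(l) = (l,t_L(l))$ and $c_H(h) = (h,t_H(h))$, and tracks the $T$-component. The key identities to invoke are $\rho = -\Delta_1(t_L) = -\Delta_1(t_H)$ on $L$, $H$ respectively (i.e. $\rho(l,h) = t_L(lh) - t_L(l)^h - t_L(h)$), the inverse formula in $\Ext(\rho)$, and the conjugation formula $(1,s)^{(w,t)} = (1, (s+[\text{stuff}])^{\cdots})$. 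After grinding through this, the $T$-valued map should split as the claimed difference $\Zeta_{L,H,w} - \lambda_t^w$; I would carry out this expansion carefully once and present it as a chain of equalities, but here I only flag that the bookkeeping of which element is conjugated by $g$ versus $g^{-1}$ is where sign and exponent errors are most likely.

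Statement (b) follows immediately from (a): since $\Zeta_{L,H,w}$ depends only on $w = \epsilon(g)$, and two elements of $gT$ differ only in their $T$-component $t$, the difference $\zeta_{L,H,g} - \zeta_{L,H,h}$ equals $\lambda_{t'}^w - \lambda_t^w = \lambda_{(t'-t)^{w^{-1}}}^{}$, which lies in $B^1(L,T) \subseteq Z^1(L,T)$ (using the remark preceding the lemma that $\lambda_t^w = \lambda_{t^{w^{-1}}}$). Hence if one element of the coset gives a cocycle, so do all of them, because $Z^1(L,T)$ is a subgroup and $B^1(L,T) \leq Z^1(L,T)$.

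For statement (c): the "only if" direction is immediate from Theorem \ref{morph}, since $C_L(\gamma)^g \leq C_H(\sigma)$ forces $\sigma^g_L - \gamma = \zeta_{L,H,g}$, and $\sigma^g_L, \gamma \in Z^1(L,T)$ give $\zeta_{L,H,g} \in Z^1(L,T)$. For "if", assuming $\zeta_{L,H,g} \in Z^1(L,T)$, I would set $\gamma = 0$ and need to produce $\sigma \in Z^1(H,T)$ with $\sigma^g_L = \zeta_{L,H,g}$; the point is that the map $\sigma \mapsto \sigma^g_L$ from $Z^1(H,T) \to Z^1(L,T)$ is given by $\sigma \mapsto (l \mapsto \sigma(l^g)^{g^{-1}})$, and since $L^g \leq H$ one can choose $\sigma$ extending the prescribed values on $L^g$ — here I would either argue directly that restriction $Z^1(H,T) \to Z^1(L^g,T)$ is surjective (cochains are determined freely on generators subject to the cocycle condition, and $L^g$ is a subgroup so extension is possible after possibly enlarging via a section), or more cleanly observe that one only needs the morphism to exist, so taking $H = L^g$ suffices if $L^g$ itself is the relevant object; in any case this is the one spot needing a small lemma about extending $1$-cocycles from a subgroup, which is where I expect the only real (though minor) obstacle to lie.

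Finally, statement (d) is the special case $L = H$, $g$ centralising. Here "$g$ centralises $C_L(\gamma)$" means $C_L(\gamma)^g = C_L(\gamma)$ elementwise, which by applying $\epsilon$ forces $w = \epsilon(g)$ to centralise $L = \epsilon(C_L(\gamma))$; and then by Theorem \ref{morph} applied with $H = L$, $\sigma = \gamma$, the condition $C_L(\gamma)^g \leq C_L(\gamma)$ (which for finite groups of equal order is equality) is equivalent to $\gamma^g_L - \gamma = \zeta_{L,L,g}$, where $\gamma^g_L(l) = \gamma(l^g)^{g^{-1}} = \gamma(l^w)^{w^{-1}}$ is exactly the action $\gamma \mapsto \gamma^g$ from Section on "An action" (noting the action factors through $w$ since $T$ acts trivially on $T$ by conjugation up to the $\lambda_t$ correction — actually one should be slightly careful that $\gamma^g$ here means $\gamma^w$, matching the earlier notation). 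So (d) reads off directly from (c) and Theorem \ref{morph} once centrality of $w$ on $L$ is extracted. The main obstacle overall is simply the careful sign/exponent tracking in part (a); everything else is a corollary of (a) together with the already-proven Theorem \ref{morph}.
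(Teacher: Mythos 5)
Your overall architecture matches the paper's: (a) by direct expansion in $\Ext(\rho)$, (b) as an immediate consequence of (a), and (c), (d) via Theorem \ref{morph}. However, there is a genuine gap in your converse direction of (c). You fix $\gamma = 0$ and then need $\sigma \in Z^1(H,T)$ with $\sigma^g_L = \zeta_{L,H,g}$, i.e.\ you must extend a prescribed $1$-cocycle on $L^g$ to all of $H$. Such an extension is not possible in general: restriction $Z^1(H,T) \ra Z^1(L^g,T)$ is surjective on coboundaries but not on cocycles (already restriction on $H^1$ need not be surjective), and your proposed justification --- that cochains are free on generators subject to the cocycle condition --- is exactly what fails once the cocycle condition is imposed. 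Your fallback of ``taking $H = L^g$'' is not available either, since $H$ is part of the data in the statement. The repair is to put the burden on $\gamma$ rather than $\sigma$: the hypothesis $\zeta_{L,H,g} \in Z^1(L,T)$ allows the choice $\gamma = -\zeta_{L,H,g}$ and $\sigma = 0$, for which $\sigma^g_L - \gamma = \zeta_{L,H,g}$ holds trivially, so Theorem \ref{morph} gives $C_L(\gamma)^g \leq C_H(\sigma)$. This is precisely the paper's argument.

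Two smaller points. For (a) you only outline the expansion; since (a) is the entire computational content of the lemma (the paper's proof is essentially one long chain of equalities, closed off by the identities $\rho(k,k^{-1})^k = \rho(k^{-1},k)$, the cocycle identity for $\rho$, and $t - t^k = -[t,k]$), a complete write-up must actually carry it out --- your plan is correct but nothing is verified. In (d), note that Theorem \ref{morph} with $H = L$ and $\sigma = \gamma$ only yields that $g$ stabilises $C_L(\gamma)$ setwise, which is weaker than centralising; you need the additional observation that once $w$ centralises $L$, the conjugation by $g$ followed by $\epsilon$ is the identity on $L$, and since $\epsilon$ is injective on $C_L(\gamma)$ this forces elementwise fixing. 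With that remark your route through Theorem \ref{morph} is a legitimate alternative to the paper's direct expansion of $(l, t_L(l)+\gamma(l))g = g(l, t_L(l)+\gamma(l))$.
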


\begin{proof}
(a) 
Let $l \in L$ and denote $k = l^w$. Note that $g$ acts on $P$ and on $T$
as $w$. Then
\begin{eqnarray*}
   && \zeta_{L,H,g}(l)^g  \\
   &=& c_H(k)^{-1} \cdot c_L(l)^g \\
   &=& (k, t_H(k))^{-1} \cdot (l, t_L(l))^g \\
   &=& (k^{-1}, -t_H(k)^{k^{-1}} - \rho(k,k^{-1}))
        \cdot g^{-1} (l, t_x(l)) g \\
   &=& (k^{-1}, -t_H(k)^{k^{-1}} - \rho(k,k^{-1}))
        \cdot (w,t)^{-1} (l, t_L(l)) (w,t) \\
   &=& (k^{-1}, -t_H(k)^{k^{-1}} - \rho(k,k^{-1})) 
     \cdot (w^{-1}, -t^{w^{-1}} - \rho(w,w^{-1})) (l, t_L(l)) (w,t) \\
   &=& (k^{-1}, -t_H(k)^{k^{-1}} - \rho(k,k^{-1})) 
      \cdot (w^{-1}, -t^{w^{-1}} - \rho(w,w^{-1}))
        (lw, t_L(l)^w + t + \rho(l,w)) \\
   &=& (k^{-1}, -t_H(k)^{k^{-1}} - \rho(k,k^{-1})) \\
   &&  \;\;\;\;\; \cdot
        (w^{-1}lw, (-t^{w^{-1}} - \rho(w,w^{-1}))^{lw}
              + t_L(l)^w + t + \rho(l,w) + \rho(w^{-1}, lw)) \\
   &=& (k^{-1} w^{-1}lw, (-t_H(k)^{k^{-1}} - \rho(k,k^{-1}))^{w^{-1}lw}
         + (-t^{w^{-1}} - \rho(w,w^{-1}))^{lw}   \\
   && \;\;\;\;\; + t_L(l)^w + t + \rho(l,w) + \rho(w^{-1}, lw)
         + \rho(k^{-1}, w^{-1}lw)) \\
   &=& (1, -t_H(k) - \rho(k,k^{-1})^{k}
         -t^k - \rho(w,w^{-1})^{lw}  \\
   && \;\;\;\;\; + t_L(l)^w + t + \rho(l,w) + \rho(w^{-1}, lw)
         + \rho(k^{-1}, k)).
\end{eqnarray*}
Note that $\rho(k,k^{-1})^k = \rho(k^{-1},k)$ and $\rho(w^{-1}, w l^w) 
+ \rho(w, l^w) = \rho(w^{-1}w, l^w) + \rho(w^{-1}, w)^{l^w}$ hold. Further,
$t-t^k = -[t,k]$. Thus we proved (a).

(b) Follows directly from (a).

(c) If $C_L(\gamma)^g \leq C_H(\sigma)$, then $\zeta_{L,H,g} \in Z^1(L,T)$
by Theorem \ref{morph}. Conversely, let $\zeta_{L,H,g} \in Z^1(L,T)$. Then
$-\gamma = \zeta_{L,H,g}$ and $\sigma : H \ra T : h \ms 0$ satisfy the
conditions of Theorem \ref{morph} and hence $C_L(\gamma)^g \leq C_H(\sigma)$
follows with these choices.

(d) If $g$ centralises $C_L(\gamma)$, then $(l, t_L(l) + \gamma(l)) g 
= g (l, t_L(l) + \gamma(l))$ for each $l \in L$. Expanding both sides
of this equation yields $lw = wl$ and $\Zeta_{L,L,w}(l)^w - \lambda_t(l) 
= \gamma(l) - \gamma(l)^w$ for each $l \in L$. As $\zeta_{L,L,w} = 
\Zeta_{L,L,w} - \lambda_t^w$ by (a), this yields the desired result.
The converse follows with similar arguments.
\end{proof}

\section{A splitting theorem}
\label{split}

Theorem 18 in \cite{ELG08} is a splitting theorem for a certain second
cohomology group. In this section we first generalise this theorem to
cohomology groups in all non-trivial dimensions. For this purpose let 
$P$ be a $p$-group of order $p^m$ and $T \cong \Z_p^d$ a $P$-module. 
Denote the maps induced by projection and inclusion with 
\begin{eqnarray*}
pro_r &:& H^n(P, T) \;\; \ra \;\; H^n(P,T/p^rT), \;\;\; \mbox{ and } \\
inc_{r,m} &:& H^n(P, p^{r-m}T/p^rT) \;\; \ra \;\; H^n(P, T/p^rT).
\end{eqnarray*}

\begin{theorem}
\label{thm18}
Let $P$ be a $p$-group of order $p^m$ and $T \cong \Z_p^d$ a $P$-module. 
For $n \geq 1$ and $r \geq 2m$ it follows that
\begin{eqnarray*}
H^n(P, T/p^rT) \;\;\; = \;\;\; Image(pro_r) &\oplus& Image(inc_{r,m}) \\
               \cong \;\;\;\;\;\;  H^n(P, T) &\oplus& H^{n+1}(P, T),
\end{eqnarray*}
and this splitting is natural with respect to restriction to subgroups 
of $P$ and with respect to the action of $P$ on cohomology groups.
\end{theorem}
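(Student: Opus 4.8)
The plan is to use the long exact sequence in cohomology attached to the short exact sequence of $P$-modules
\[ 0 \ra p^{r-m}T/p^rT \xrightarrow{\;inc_{r,m}\;} T/p^rT \xrightarrow{\;pro_{r-m}'\;} T/p^{r-m}T \ra 0, \]
where the quotient $T/p^rT$ divided by the submodule $p^{r-m}T/p^rT$ is identified with $T/p^{r-m}T$ (this is the map $div_m$ applied after projection, up to the standing isomorphisms). Since $\N$ and $p^\bullet$-torsion are everywhere, all the modules are finite $p$-groups and all cohomology groups are finite. The connecting homomorphism $H^n(P,T/p^{r-m}T) \ra H^{n+1}(P, p^{r-m}T/p^rT)$ is the key object to control. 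First I would observe that, because $|P| = p^m$, multiplication by $p^m$ kills every cohomology group $H^i(P, -)$ in positive degree (standard transfer/restriction argument: $p^m = |P|$ annihilates $\tilde H^*(P,-)$). Using $r \geq 2m$, I would show that the connecting map is zero: the composite $p^{r-m}T/p^rT \hookrightarrow T/p^rT$ factors through multiplication by $p^{r-m}$ on $T/p^rT$ followed by $div$-type identifications, and since $r-m \geq m$ the induced map on positive-degree cohomology is zero. Dually, the map $H^n(P,T/p^rT) \ra H^n(P,T/p^{r-m}T)$ is surjective, because its cokernel injects into $H^{n+1}(P, p^{r-m}T/p^rT)$ via a connecting map that I will again argue is zero by the same $p^m$-annihilation estimate. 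Hence the long exact sequence breaks into short exact sequences
\[ 0 \ra H^n(P, p^{r-m}T/p^rT) \xrightarrow{inc_{r,m}} H^n(P,T/p^rT) \ra H^n(P,T/p^{r-m}T) \ra 0. \]

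Next I would identify the two outer terms with $H^{n+1}(P,T)$ and $H^n(P,T)$ respectively. For the quotient term, $div_m : p^{r-m}T/p^rT \ra T/p^{m}T$ wait — more carefully, $T/p^{r-m}T$: I want to let $r \to \infty$, or rather use that for $r-m \geq 2m$ the group $H^n(P,T/p^{r-m}T)$ is already in the stable range and there is a natural surjection $H^n(P,T) \to H^n(P,T/p^{r-m}T)$ whose kernel is $p^{r-m}H^n(P,T)$; since $p^m$ annihilates $H^n(P,T)$ and $r-m \geq m$, this kernel vanishes and $pro_{r-m} : H^n(P,T) \ra H^n(P,T/p^{r-m}T)$ is an isomorphism. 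Composing, $Image(pro_r)$ in $H^n(P,T/p^rT)$ is exactly a complement-section of the above short exact sequence mapping isomorphically to $H^n(P,T) \cong H^n(P,T/p^{r-m}T)$. For the submodule term, $div_{r-m} : p^{r-m}T/p^rT \ra T/p^mT$ is a $P$-module isomorphism (as recorded in the Mappings subsection), and again $r \geq 2m$ puts $T/p^mT$... hmm, this only gives $H^n(P,T/p^mT)$, not $H^{n+1}(P,T)$ directly — so instead I would use the same long exact sequence one step further, or simply note $p^{r-m}T/p^rT \cong T/p^mT$ and apply the identification $H^{n}(P,T/p^mT) \cong H^{n+1}(P,T)$, valid once $m$ is itself in the stable range, which it is not in general.

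So the cleaner route, which I would actually follow, is: from the short exact sequence $0 \ra p^{r-m}T/p^rT \ra T/p^rT \ra T/p^{r-m}T \ra 0$ together with $0 \ra T \xrightarrow{p^r} T \ra T/p^rT \ra 0$ and $0 \ra T \xrightarrow{p^{r-m}} T \ra T/p^{r-m}T \ra 0$, one gets $H^n(P,T/p^rT)$ sitting in an extension of $p^{r-m}H^{n+1}(P,T)\text{-torsion-killed} $ terms. Concretely: $H^n(P,T/p^rT) \cong H^n(P,T)/p^rH^n(P,T) \oplus {}_{p^r}H^{n+1}(P,T)$ from the Bockstein sequence for $\times p^r$, and since $p^m$ kills both $H^n(P,T)$ and $H^{n+1}(P,T)$ while $r \geq 2m \geq m$, these are $H^n(P,T) \oplus H^{n+1}(P,T)$. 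Then I would match up the two summands with $Image(pro_r)$ and $Image(inc_{r,m})$ by chasing the maps: $pro_r$ is exactly the Bockstein-sequence map $H^n(P,T) \to H^n(P,T/p^rT)$, whose image is the first summand; and $inc_{r,m}$ lands in the second summand because $p^{r-m}T/p^rT$ is precisely the image of multiplication by $p^{r-m}$, i.e. the $p^m$-torsion of $T/p^rT$ (using $r \geq 2m$), and the connecting map $H^n(P,T/p^rT) \to H^{n+1}(P,p^{r-m}T/p^rT)$... the identification $div_{r-m}$ converts $H^n(P,p^{r-m}T/p^rT) \cong H^n(P, T/p^mT)$, and the Bockstein for $\times p^m$ gives $H^n(P,T/p^mT) \twoheadleftarrow \cdots$, with the relevant piece ${}_{p^m}H^{n+1}(P,T) = H^{n+1}(P,T)$.

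Finally, naturality. Every map used — the Bockstein connecting homomorphisms, $pro_r$, $inc_{r,m}$, $div_l$, and $res_L$ — is natural in the module and compatible with restriction to subgroups $L \leq P$ (restriction commutes with all connecting maps in long exact sequences) and with the conjugation action of $P$ on cohomology described in the "An action" subsection (that action is induced by an inner automorphism of the pair $(P,T)$, hence commutes with every natural transformation of cohomology functors). So the direct sum decomposition, being built entirely out of images of natural maps, is automatically natural for both. The main obstacle I anticipate is purely bookkeeping: pinning down that $Image(inc_{r,m})$ is exactly the $p^m$-torsion summand and not some larger subgroup, which requires the inequality $r \geq 2m$ in the precise form "$p^{r-m}T/p^rT$ equals the full $p^m$-torsion of $T/p^rT$" — true because $r - m \geq m$ — together with the matching fact that $p^m$ annihilates $H^{*}(P,T)$ so that no torsion of order $> p^m$ appears in $H^{n+1}(P,T)$ to begin with. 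Everything else is a diagram chase through Bockstein sequences.
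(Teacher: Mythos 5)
Your first route breaks down at the vanishing claims. For the coefficient sequence $0 \ra p^{r-m}T/p^rT \ra T/p^rT \ra T/p^{r-m}T \ra 0$ the connecting homomorphism is not zero in general, and the short exact sequences $0 \ra H^n(P,p^{r-m}T/p^rT) \ra H^n(P,T/p^rT) \ra H^n(P,T/p^{r-m}T) \ra 0$ you write down are false: take $P = \Z/p$ acting trivially on $T = \Z_p$; then all three cohomology groups have order $p$ in every positive degree, so the sequence cannot be exact (and in odd degrees the middle map $H^n(P,T/p^rT) \ra H^n(P,T/p^{r-m}T)$ is in fact zero, so the connecting map there is injective, not zero). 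Your factorization argument only shows that the composite of $inc_{r,m}$ with the projection, i.e.\ multiplication by $p^{r-m}$ on $H^n(P,T/p^rT)$, vanishes — not that either factor vanishes; indeed $inc_{r,m}$ cannot vanish on cohomology, since its image is supposed to be a copy of $H^{n+1}(P,T)$. Similarly, your claim that $pro_{r-m}\colon H^n(P,T) \ra H^n(P,T/p^{r-m}T)$ is an isomorphism is wrong: it is injective once $r-m \geq m$ (this is what the paper uses), but its cokernel is $H^{n+1}(P,p^{r-m}T) \cong H^{n+1}(P,T)$, nonzero in the same example for odd $n$.

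Your fallback route has the same essential gap. The Bockstein/transfer argument for $\times p^r$ only yields the short exact sequence $0 \ra H^n(P,T) \ra H^n(P,T/p^rT) \ra H^{n+1}(P,T) \ra 0$ (this is also the paper's first step); writing $\oplus$ at that point assumes precisely what is to be proved, namely a splitting — and moreover a splitting natural for restriction to subgroups and for the $P$-action, which is what the theorem asserts and what the later sections rely on. Abstract universal-coefficient splittings are not natural, and this is exactly where the hypothesis $r \geq 2m$ enters: the paper retracts $H^n(P,T/p^rT)$ onto $Image(pro_r)$ using the further projection to $H^n(P,T/p^{r-m}T)$ together with the injectivity of $pro_{r-m}$ (valid because $r-m\geq m$), and then identifies the kernel of this natural retraction with $Image(inc_{r,m})$ by a diagram chase in which the maps $H^n(P,p^{r-m}T) \ra H^n(P,T)$ and $H^{n+1}(P,p^rT) \ra H^{n+1}(P,p^{r-m}T)$ vanish by transfer. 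You never establish that $Image(pro_r) \cap Image(inc_{r,m}) = 0$, nor that the two images span $H^n(P,T/p^rT)$, and your closing remark that the decomposition is automatically natural "because it is built from natural maps" presupposes the decomposition you have not constructed; the module-level observation that $p^{r-m}T/p^rT$ is the $p^m$-torsion of $T/p^rT$ does not substitute for this cohomological argument.
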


\begin{proof}
We prove the claim using essentially the same proof as for Theorem 18
in \cite{ELG08}. The short exact sequence $0 \ra p^r T \ra T \ra T/p^rT
\ra 0$ induces the long exact sequence of cohomology groups
\[ \ldots \ra H^n(P, p^r T) \stackrel{inc_r}{\ra} H^n(P, T)
                            \stackrel{pro_r}{\ra} H^n(P, T/p^rT)
                            \stackrel{con_r}{\ra} H^{n+1}(P, p^rT)
                            \stackrel{inc_r}{\ra} \ldots \]
As $p^r \geq |P|$ and $n \geq 1$, transfer theory implies that $inc_r$  
vanishes, see for example \cite[Proposition 3.6.17]{Benson:I}. As $r-m 
\geq m$, there exists the following commutative diagram with exact rows
\[
\begin{CD}
0 @>>> H^n(P,p^{r-m} T) @>{pro_r}>> H^n (P, p^{r-m} T/p^rT) @>{con_r}>>
                                              H^{n+1}(P,p^rT) @>>> 0 \\
& & @V{0}VV @V{inc_{r,m}}VV @| \\
0 @>>> H^n(P,T) @>{pro_r}>> H^n (P, T/p^rT) @>{con_r}>>
                                              H^{n+1}(P,p^rT) @>>> 0 \\
& & @| @V{pro_{r,m}}VV @V{0}VV \\
0 @>>> H^n(P,T) @>{pro_{r-m}}>> H^n(P, T/p^{r-m}T) @>{con_{r-m}}>>
       H^{n+1}(P,p^{r-m}T) @>>> 0
\end{CD}
\]
Hence $pro_{r,m} \circ con_{r-m} = 0$ and $Image(pro_{r,m}) \subseteq 
Kernel(com_{r-m}) = pro_{r-m}(H^n(P,T))$ follows, since the rows of the 
diagram are
exact. As $pro_{r-m}$ is injective, we can define $pro_{r,m} \circ 
pro_{r-m}^{-1} : H^n(P,T) \ra H^n(P,T/p^rT)$. The kernel $K$ of this map is 
a complement to $pro_r(H^n(P,T))$ in $H^n(P,T/p^rT)$. Further, the map 
$inc_{r-m} : H^n(P,p^{r-m}T) \ra H^n(P,T)$ vanishes. Hence the image of 
$inc_{r,m} : H^n(P, p^{r-m}T/p^rT) \ra H^n(P, T/p^rT)$ coincides with $K$. 
Thus $K = Image(inc_{r,m}) \cong H^{n+1}(P, p^rT) \cong H^{n+1}(P, T)$. 

The splitting is natural with respect to restriction to subgroups,
as $|L| \leq |P|$ for $L \leq P$ and the splitting is natural with 
respect to the action of $P$ as all maps in the diagram are compatible 
with this action.
\end{proof}

Next we consider a variation of Theorem \ref{thm18} for groups of cocycles.
Again, let $P$ be a $p$-group of order $p^m$ and $T \cong \Z_p^d$ a $P$-module
and denote the maps induced by projection and inclusion with 
\begin{eqnarray*}
pro_r &:& Z^n(P, T) \;\; \ra \;\; Z^n(P, T/p^rT) \\
      && \;\;\;\;\; \mbox{ with } \;\; I^n(P, T/p^rT) := Image(pro_r), 
         \;\; \mbox{ and } \\
inc_{r,m} &:& Z^n(P, p^{r-m}T/p^rT) \;\; \ra \;\; Z^n(P, T/p^rT) \\
      && \;\;\;\;\; \mbox{ with } \;\; J^n(P, T/p^rT) := Image(inc_{r,m}),
         \;\; \mbox{ and } \\
inc^*_{r,m} &:& Z^n(P, p^mT/p^rT) \;\; \ra \;\; Z^n(P, T/p^rT) \\
      && \;\;\;\;\; \mbox{ with } \;\; J^{n,*}(P, T/p^rT) 
          := Image(inc^*_{r,m}).
\end{eqnarray*}

\begin{lemma}
\label{lem18}
Let $P$ be a $p$-group with $|P| = p^m$ and let $T \cong \Z_p^d$ a 
$P$-module. Let $n \geq 1$ and $r \geq 2m$ and $p^k = exp(H^n(P, T))$.
\begin{items}
\item[\rm (a)]
There exists a subgroup $K^n(P,T/p^rT) \leq J^n(P, T/p^rT)$ so that
$K^n(P,T/p^rT)$ is a complement to $I^n(P, T/p^rT)$ in $Z^n(P, T/p^rT)$
\item[\rm (b)]
$p^{r-m} B^n(P, T/p^rT) \leq I^n(P, T/p^rT) \cap J^n(P, T/p^rT) \leq 
p^{r-m-k} B^n(P, T/p^rT)$ and the latter is contained in $B^n(P, T/p^rT) 
\leq I^n(P, T/p^rT)$.
\item[\rm (c)]
$B^n(P, T/p^rT) + J^{n,*}(P, T/p^rT) = B^n(P, T/p^rT) + J^n(P, T/p_rT)
= B^n(P, T/p^rT) + K^n(P, T/p^rT)$.
\end{items}
We visualise these results in Figure \ref{fig18}. \begin{figure}[htb]
\begin{center}
\setlength{\unitlength}{3144sp}%
\begingroup\makeatletter\ifx\SetFigFont\undefined%
\gdef\SetFigFont#1#2#3#4#5{%
  \reset@font\fontsize{#1}{#2pt}%
  \fontfamily{#3}\fontseries{#4}\fontshape{#5}%
  \selectfont}%
\fi\endgroup%

\begin{picture}(2730,4121)(2866,-5885)
\put(4051,-2351){\circle*{90}}
\put(4051,-3051){\circle*{90}}
\put(4051,-3701){\circle*{90}}
\put(4051,-4331){\circle*{90}}
\put(4051,-5021){\circle*{90}}
\put(4051,-5731){\circle*{90}}

\put(5401,-5251){\circle*{90}}
\put(5401,-4551){\circle*{90}}
\put(5401,-3901){\circle*{90}}
\put(5401,-3251){\circle*{90}}
\put(5401,-2581){\circle*{90}}
\put(5401,-1861){\circle*{90}}

\put(4051,-2311){\line( 0,-1){3420}}
\put(5401,-1861){\line( 0,-1){3420}}
\put(4051,-5731){\line( 3, 1){1350}}
\put(4087,-5029){\line( 3, 1){1350}}
\put(4087,-4309){\line( 3, 1){1350}}
\put(4087,-3679){\line( 3, 1){1350}}
\put(4087,-3049){\line( 3, 1){1350}}
\put(4087,-2329){\line( 3, 1){1350}}

\put(3601,-5821){\makebox(0,0)[lb]{$\{0\}$}}
\put(3051,-5191){\makebox(0,0)[lb]{$p^{r-m}B^n$}}
\put(3081,-4501){\makebox(0,0)[lb]{$I^n \cap J^n$}}
\put(3601,-3231){\makebox(0,0)[lb]{$B^n$}}
\put(2881,-3931){\makebox(0,0)[lb]{$p^{r-m-k} B^n$}}
\put(3691,-2531){\makebox(0,0)[lb]{$I^n$}}
\put(5541,-4071){\makebox(0,0)[lb]{$J^n$}}
\put(5541,-5401){\makebox(0,0)[lb]{$K^n$}}
\put(5541,-2001){\makebox(0,0)[lb]{$Z^n$}}
\end{picture}%

\end{center}
\caption{Subgroups of $Z^n(P, T/p^rT)$}
\label{fig18}
\end{figure}

\end{lemma}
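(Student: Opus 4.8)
The plan is to recognise $J^n(P,T/p^rT)$ and $J^{n,*}(P,T/p^rT)$ as torsion subgroups of $Z^n(P,T/p^rT)$ and then to reduce everything to Theorem~\ref{thm18}. Abbreviate $Z^n=Z^n(P,T/p^rT)$, $B^n=B^n(P,T/p^rT)$, $I^n=I^n(P,T/p^rT)$, and likewise $J^n$, $J^{n,*}$. First the preliminaries. Since $T\cong\Z_p^d$ is $\Z_p$-free, $p^mx\in p^rT$ is equivalent to $x\in p^{r-m}T$, so the submodule $p^{r-m}T/p^rT$ of $T/p^rT$ is precisely its $p^m$-torsion and $p^mT/p^rT$ is its $p^{r-m}$-torsion. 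As a cochain valued in a submodule is a cocycle there if and only if it is a cocycle upstairs, this gives $J^n=Z^n[p^m]$ and $J^{n,*}=Z^n[p^{r-m}]$; in particular $J^n\subseteq J^{n,*}$ because $r\ge 2m$. By Section~\ref{cohext}, $pro_r$ maps $B^n(P,T)$ onto $B^n$, so $B^n\subseteq I^n$; and $k\le m$ gives $r-m-k\ge 0$, so the tail $p^{r-m-k}B^n\subseteq B^n\subseteq I^n$ of~(b) already holds. Moreover $C^n(P,T)$ is a finitely generated free $\Z_p$-module and $C^n(P,T)/Z^n(P,T)\cong B^{n+1}(P,T)$ is torsion-free, so $Z^n(P,T)$ is a direct summand of $C^n(P,T)$; hence $I^n=pro_r(Z^n(P,T))\cong Z^n(P,T)/p^rZ^n(P,T)$ is a free $\Z/p^r$-module. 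Finally $n\ge 1$ and $p^r\ge|P|$ make $inc_r$ vanish on $H^n$ by transfer theory (as in the proof of Theorem~\ref{thm18}), so $pro_r$ is injective on $H^n$; thus $I^n/B^n\cong H^n(P,T)$ and in particular $p^kI^n\subseteq B^n$.

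Now reduce via Theorem~\ref{thm18}. Let $q:Z^n\to H^n(P,T/p^rT)$ be the quotient map; it has kernel $B^n$, and the images of $I^n$ and $J^n$ under $q$ are $Image(pro_r)$ and $Image(inc_{r,m})$ in cohomology, so Theorem~\ref{thm18} reads $H^n(P,T/p^rT)=q(I^n)\oplus q(J^n)$ with $q(J^n)\cong H^{n+1}(P,T)$. Since $B^n\subseteq I^n$ this yields $Z^n=I^n+J^n$ and $I^n\cap J^n=B^n\cap J^n=B^n[p^m]$, the middle term of~(b). For~(a): being a free $\Z/p^r$-module, $I^n$ is a pure subgroup of $Z^n$, hence a direct summand, say $Z^n=I^n\oplus M$. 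Then $M\cong Z^n/I^n\cong q(J^n)\cong H^{n+1}(P,T)$, whose exponent divides $|P|=p^m$, so $M\subseteq Z^n[p^m]=J^n$; thus $K^n:=M$ is a complement to $I^n$ inside $J^n$. Comparing orders (using $|q(I^n)|=|H^n(P,T)|$) shows $|q(M)|=|q(J^n)|$, and since $M\subseteq J^n$ we conclude $q(M)=q(J^n)$; this will be used for~(c).

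It remains to finish~(b) and to prove~(c). The left inclusion of~(b) is immediate: $I^n\cap J^n=B^n[p^m]$ and $p^m\cdot p^{r-m}B^n=p^rB^n=0$. For the right inclusion, take $\beta\in B^n[p^m]$ and write $\beta=pro_r(\Delta_{n-1}(\eta))$ with $\eta\in C^{n-1}(P,T)$; the condition $p^m\beta=0$ forces $\Delta_{n-1}(\eta)\in p^{r-m}C^n(P,T)$ by $\Z_p$-freeness, so $\gamma:=p^{-(r-m)}\Delta_{n-1}(\eta)$ lies in $Z^n(P,T)$; since $exp(H^n(P,T))=p^k$ we have $p^k\gamma=\Delta_{n-1}(\eta')$ for some $\eta'$, whence $\beta=p^{r-m}pro_r(\gamma)=p^{r-m-k}pro_r(\Delta_{n-1}(\eta'))\in p^{r-m-k}B^n$. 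For~(c): from $q(M)=q(J^n)$ and the fact that $B^n$ is the kernel of $q$ we get $B^n+K^n=B^n+J^n$. Also $J^{n,*}=Z^n[p^{r-m}]=I^n[p^{r-m}]\oplus M$ since $exp(M)\le p^m\le p^{r-m}$; because $I^n$ is free over $\Z/p^r$ we have $I^n[p^{r-m}]=p^mI^n\subseteq p^kI^n\subseteq B^n$, so $q(J^{n,*})=q(M)=q(J^n)$ and therefore $B^n+J^{n,*}=B^n+J^n$.

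The one step that is not routine is~(a): promoting the cohomology-level splitting of Theorem~\ref{thm18} to a splitting of cocycle groups whose complementary summand is forced to lie inside $J^n$. Both ingredients seem essential — that $I^n$ is a free, hence pure, $\Z/p^r$-submodule of $Z^n$ (so that $I^n$ splits off at all), and that $Z^n/I^n\cong H^{n+1}(P,T)$ has exponent dividing $|P|$ (so that every complement automatically sits in $Z^n[p^m]=J^n$).
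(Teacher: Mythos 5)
Your argument is correct, but it reaches part (a) by a genuinely different route than the paper. The paper constructs the complement via Smith normal form: it realises $Z^n(P,T)$ as the solution set of a linear system over $\Z_p$, diagonalises, obtains $Z^n(P,T/p^rT) \cong pro_r(Z^n(P,T)) \oplus W_r$, and then asserts that $W_r$ can be chosen inside $J^n(P,T/p^rT)$. You instead characterise $J^n$ and $J^{n,*}$ as the $p^m$- and $p^{r-m}$-torsion subgroups of $Z^n(P,T/p^rT)$, show $I^n \cong Z^n(P,T)/p^rZ^n(P,T)$ is a free $\Z/p^r$-module --- hence pure, hence a direct summand (you are implicitly invoking the classical fact that a bounded pure subgroup splits off, or equivalently self-injectivity of $\Z/p^r$; that is standard and fine) --- and then force any complement $M$ into $J^n$ via $M \cong Z^n/I^n \cong H^{n+1}(P,T)$, whose exponent divides $p^m$. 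This gives a cleaner justification of exactly the point the paper treats tersely (``$W_r$ can be chosen so that it is contained in $J^n$''). For (b) and (c) the two proofs share the same skeleton: both pass through the quotient map to cohomology and the splitting of Theorem \ref{thm18} to get $I^n \cap J^n \leq B^n$ and to compare images modulo $B^n$. The variations are minor: for $I^n\cap J^n\leq p^{r-m-k}B^n$ the paper uses $I^n\cap J^n = pro_r(p^{r-m}Z^n(P,T))$ together with $p^kZ^n(P,T)\leq B^n(P,T)$, while you lift a coboundary to $T$ and divide by $p^{r-m}$ at the cochain level; for (c) the paper computes the image of $J^{n,*}$ in cohomology directly, while you use $J^{n,*} = I^n[p^{r-m}]\oplus K^n$ and $I^n[p^{r-m}] = p^mI^n \leq B^n$. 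Both are sound; your torsion-theoretic reading of $J^n$ and $J^{n,*}$ makes the inclusions in the lemma more transparent, whereas the paper's normal-form argument is more explicitly computational and is what it later reuses when choosing the complements compatibly.
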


\begin{proof}
(a) By definition, the group $Z^n(P,T)$ can be considered as the set
of solutions of a system of linear equations over $\Z_p$. (As $T \cong
\Z_p^d$, there are $d |P|^{n+1}$ equations in $d |P|^n$ unknowns.) Let 
$M$ be the matrix corresponding to this system so that $Z^n(P,T)$ is the 
set of solutions of $Mx = 0$. Then there exist invertible matrices $P$ 
and $Q$ so that $P M Q = D$ is a diagonal matrix with the $t$ diagonal 
entries $(p^{e_1}, \ldots, p^{e_s}, 0, \ldots, 0)$, say. The equation
$Mx = 0$ is equivalent to $Dy = 0$ with $x = Qy$. Hence $Z^n(P,T)$ is 
isomorphic to $\Z_p^{t-s}$. The group $Z^n(P,T/p^rT)$ corresponds to the 
set of solutions of $Mx \equiv 0 \bmod p^r$. This equation is equivalent 
to $Dy \equiv 0 \bmod p^rT$ and thus $Z^n(P,T/p^rT) \cong pro_r(Z^n(P,T)) 
\oplus W_r$ with $W_r$ the finite abelian group with abelian invariants 
$(p^{l_1}, \ldots, p^{l_s})$, where $l_i = \min\{e_i, r\}$. As $p^m 
H^n(P, T/p^rT) = 0$ and $r \geq 2m$, it follows that $e_i \leq r$ and
thus $l_i = e_i$ for $1 \leq i \leq s$. Further, $W_r$ can be chosen
so that it is contained in $J^n(P, T/p^rT)$. Choosing $K^n(P, T/p^rT) 
= W_r$ yields the desired result.

(b) Projection maps $B^n(P,T)$ onto $B^n(P, T/p^rT)$. Thus
$B^n(P, T/p^rT) \leq I^n(P, T/p^rT)$ holds. Next, let $\varphi : 
Z^n(P, T/p^rT) \ra H^n(P, T/p^rT)$ the natural map onto a quotient. 
By Theorem \ref{thm18}, there is a splitting $H^n(P,T/p^rT) = H^n(P,T) 
\oplus H^{n+1}(P,T)$ and $I^n(P,T/p^rT)$ maps onto $H^n(P,T)$ under 
$\varphi$ and $J^n(P,T/p^rT)$ maps onto $H^{n+1}(P,T)$.  This yields that 
$I^n(P,T/p^rT) \cap J^n(P, T/p^rT) \leq ker(\varphi) = B^n(P,T/p^rT)$.
As $p^{r-m} B^n(P, T/p^rT) = B^n(P, p^{r-m} T/p^rT) \leq J^n(P, T/p^rT)$
by construction, it follows that $p^{r-m} B^n(P, T/p^rT) \leq I^n(P, T/p^rT) 
\cap J^n(P, T/p^rT)$. Finally, from the definition of $J^n(P, T/p^rT)$ 
follows that $I^n(P, T/p^rT) \cap J^n(P, T/p^rT) = pro_r( Z^n(P, p^{r-m}T))$ 
and $Z^n(P, p^{r-m}T) = p^{r-m} Z^n(P, T)$. As
$p^k Z^n(P, T) \leq B^n(P, T)$, this yields $p^{r-m} Z^n(P, T) \leq 
p^{r-m-k} B^n(P, T)$. It follows now that $I^n(P, T/p^rT) \cap 
J^n(P, T/p^rT) = pro_r(p^{r-m} Z^n(P, T)) \leq pro_r( p^{r-m-k} B^n(P, T)) 
= p^{r-m-k} pro_r(B^n(P, T)) = p^{r-m-k} B^n(P, T/p^rT)$.

(c) We first note that $m \leq r-m$ and thus $J^n(P, T/p^rT) = 
Z^n(P, p^{m-r}T/p^rT) \leq Z^n(P, p^mT/p^rT) = J^{n,*}(P, T/p^rT)$. 
As in (b), let $\varphi : Z^n(P, T/p^rT) \ra H^n(P, T/p^rT)$ the natural 
map onto a quotient and recall that $H^n(P,T/p^rT) = H^n(P,T) \oplus 
H^{n+1}(P,T)$ by Theorem \ref{thm18}. The the image of $J^{n,*}(P,T/p^rT)$
under $\varphi$ contains the direct summand $H^{n+1}(P, T)$, as this is the 
image of $J^n(P, T/p^rT)$. Let $\alpha \in I^n(P, T/p^rT) \cap J^{n,*}
(P, T/p^rT)$. Then $\alpha = pro_r(\beta)$ and $\beta \in Z^n(P, p^m T)
= p^m Z^n(P, T)$. As $p^m = |P|$, we note that $p^m H^n(P, T) = \{0\}$ and
thus $p^m Z^n(P, T) \leq B^n(P,T)$. Thus $\alpha \in B^2(P, T/p^rT)$.
Hence the image of $J^{n,*}(P, T/p^rT)$ under $\varphi$ is exactly 
$H^{n+1}(P,T)$. And this proves the desired result.
\end{proof}

The complement $K^n(P, T/p^rT)$ as obtained in Lemma \ref{lem18}(a) is
not necessarily unique. The next Lemma asserts that the complements 
$K^n(P,T/p^rT)$ can be chosen so that they are consistent with each 
other. For this purpose note that the multiplication $T/p^rT \ra T/p^{r+1}T : 
t + p^rT \ms pt + p^{r+1}T$ induces a map
\[ mul : Z^n(P, T/p^rT) \ra Z^n(P, T/p^{r+1}T). \]

\begin{lemma} \label{choice}
Let $n \geq 1$, let $P$ be a $p$-group of order $p^m$ and $T \cong \Z_p^d$ 
a $P$-module. Then for each $r \geq 2m$ and each $L \leq P$ we can choose
$K^n(L, T/p^rT)$ so that
\[ mul( K^n(L, T/p^rT)) = K^n(L, T/p^{r+1}T).\]
\end{lemma}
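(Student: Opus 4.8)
The plan is to go back into the proof of Lemma~\ref{lem18}(a) and re-examine how $K^n(P,T/p^rT)$ was constructed, then carry the Smith-normal-form data along as $r$ varies so that the complements at different levels are literally related by $mul$. Recall that in that proof we fixed invertible integral (indeed $\Z_p$-) matrices $P$ and $Q$ with $PMQ = D = \mathrm{diag}(p^{e_1},\ldots,p^{e_s},0,\ldots,0)$, where $M$ is the coefficient matrix of the cocycle equations for $Z^n(L,T)$. Crucially, $M$, and hence $D$, $P$, $Q$, and the exponents $e_i$, depend only on $L$ and the $\Z_p$-module $T$, \emph{not} on $r$. For $r \geq 2m$ we have $e_i \le m \le r$ for all $i$, so the finite part $W_r$ of the solution set of $Dy \equiv 0 \bmod p^r$ is, in the $y$-coordinates, exactly $\bigoplus_{i=1}^s p^{r-e_i}(\Z/p^r)$; transporting back through $x = Qy$ gives a concrete subgroup $W_r \le Z^n(L,T/p^rT)$, contained in $J^n(L,T/p^rT)$, which is the $K^n(L,T/p^rT)$ we want.

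The key observation is that the map $mul : Z^n(L,T/p^rT) \ra Z^n(L,T/p^{r+1}T)$ is, in the $y$-coordinates, just the componentwise multiplication-by-$p$ map $\Z/p^r \ra \Z/p^{r+1}$, because $Q$ is a $\Z_p$-module isomorphism and therefore commutes with $mul$. Under this map the cyclic summand $p^{r-e_i}(\Z/p^r)$ is carried isomorphically onto $p^{(r+1)-e_i}(\Z/p^{r+1})$: indeed $p \cdot p^{r-e_i}(\Z/p^r)$, viewed inside $\Z/p^{r+1}$ via $mul$, is generated by the image of $p^{r-e_i}$, which is $p^{r+1-e_i}$, and this has order $p^{e_i}$ in $\Z/p^{r+1}$ (using $e_i \le r$), matching the order of the original summand. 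Hence $mul(W_r) = W_{r+1}$ componentwise, and therefore $mul(K^n(L,T/p^rT)) = K^n(L,T/p^{r+1}T)$ after transporting back by $Q$. One should also check that $W_{r+1}$ so defined still lies in $J^n(L,T/p^{r+1}T)$; this is automatic since $mul$ sends $p^{r-m}T/p^rT$ into $p^{r+1-m}T/p^{r+1}T$ and hence $J^n$ at level $r$ into $J^n$ at level $r+1$, and the same holds for the image $W_r = p^{r-m}(\text{something})$ by the exponent bounds.

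The main obstacle, and the only point requiring genuine care, is making sure the choice of $Q$ (equivalently, of the Smith normal form data) is fixed \emph{once and for all for each $L$} before any $r$ enters, so that the recursion $K^n(L,T/p^{r+1}T) := mul(K^n(L,T/p^rT))$ is consistent with the direct description at every level simultaneously; a sloppy proof that re-diagonalises at each $r$ would not give compatible complements. A secondary subtlety is the base case: one fixes $K^n(L,T/p^{2m}T) = W_{2m}$ from the construction in Lemma~\ref{lem18}(a) and then \emph{defines} the higher ones by pushing forward along $mul$, checking at each step that the result genuinely is a complement to $I^n(L,T/p^{r+1}T)$ satisfying the conclusion of Lemma~\ref{lem18}(a) — which it is, since it equals the $W_{r+1}$ coming from the same $Q$. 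Everything else is the routine linear algebra over $\Z_p$ already used in the proof of Lemma~\ref{lem18}, so I would not spell it out in detail.
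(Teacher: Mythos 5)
Your proof is correct and takes essentially the same route as the paper, whose entire argument is to fix a complement $K^n(L,T/p^{2m}T)$ at the base level $r=2m$ and then define $K^n(L,T/p^{r+1}T) := mul(K^n(L,T/p^rT))$ recursively. Your fixed-once-and-for-all Smith normal form computation merely spells out the verification (that the $mul$-image is again a complement to $I^n$ lying in $J^n$) which the paper leaves implicit from its proof of Lemma~\ref{lem18}(a).
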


\begin{proof}
For each $n \geq 1$ and each $L \leq P$ choose a complement 
$K^n(L, T/p^{2m}T)$ and then define $K^n(L, T/p^{r+1}T)$ as
$mul( K^n(L, T/p^rT))$. This yields the desired result.
\end{proof}

We note that the subgroups $I^n(P, T/p^rT)$ and $J^n(P, T/p^rT)$ are 
invariant under the conjugation action of $P$ on $Z^n(P, T/p^rT)$, but 
it is in general not possible to find a complement $K^n(P, T/p^rT)$ that 
is also invariant under this conjugation action. The complements 
$K^n(P, T/p^rT)$ are also not necessarily compatible with restriction to 
subgroups $L \leq P$. 

Throughout the remainder of this paper we will assume that we have
chosen $K^n(P, T/p^rT)$ so that Corollary \ref{choice} holds. Then for 
$\gamma \in Z^n(P, T/p^rT)$ let $\ol{\gamma} \in Z^n(P, T)$ and 
$\ul{\gamma} \in K^n(P,T/p^rT)$ so that $\gamma = pro_r( \ol{\gamma} ) 
+ \ul{\gamma}$. Thus we can define an epimorphism
\begin{eqnarray*}
epi = epi_{n,r} &:& Z^n(P, T/p^rT) \ra Z^n(P, T/p^{r-1}T) \\
    &:& pro_r(\ol{\gamma}) + \ul{\gamma} \ms
        pro_{r-1}(\ol{\gamma}) + div(\ul{\gamma}).
\end{eqnarray*}

\section{Coclass families of finite $p$-groups}
\label{coclass}

We first recall the explicit construction of coclass families of 
finite $p$-groups of coclass $r$ from \cite{ELG08} and then 
investigate their Quillen categories. 

\subsection{Coclass families via extensions}

Every infinite coclass family is associated with an infinite pro-$p$-group
$S$ of coclass $r$. As in Section \ref{infinite}, we consider $S$ as an 
extension of $P$ by $T$ via $\rho \in Z^2(P,T)$ and we denote $|P| = p^m$.
Let $e = 3m$ and for $x \in \N_0$ let $M_x = T/p^{x+e}T$. Then $M_x$ is a 
$P$-module via the conjugation of $S$ on $T$. We denote the projection onto 
$M_x$ by $pro_x$ and division by $p^{x+e}$ by $div_x$ to simplify notation.
Theorem \ref{thm18} asserts that $Z^2(P, M_x) = I^2(P, M_x) \oplus 
K^2(P, M_x)$, where $I^2(P, M_x)$ is the image of $Z^2(P,T)$ in $Z^2(P,M_x)$ 
under $pro_x$ and $K^2(P, M_x) \cong H^3(P,T)$ via $div_x \circ con$. Let 
$\rho_x$ be the image of $\rho$ in $Z^2(P,M_x)$ under $pro_x$.
\medskip

{\it Definition:}
For $\eta \in H^3(P, T)$ let $\eta_x$ be the unique preimage in $K^2(P,M_x)$
under $div_x \circ con$. Let $G_x$ be the extension of $P$ by $M_x$ via the 
cocycle $\rho_x + \eta_x$. Then $(G_x \mid x \in \N_0)$ is the coclass 
family defined by $\eta$.
\medskip

The group $G_x$ is the set $P \times M_x$ equipped with the multiplication
\[ (g,m)(h,n) = (gh, m^h + n + \rho_x(g,h) + \eta_x(g,h)).\]
Let $G_x \ra P : (g,m) \ms g$ be the natural epimorphism of $G_x$ onto
$P$ defined by the extension structure of $G_x$. For a subgroup $L \leq P$ 
we denote with $\ol{L}_x$ be the full preimage of $L$ in $G_x$ under this 
natural epimorphism. 

\subsection{The objects of $\A_p(G_x)$}
\label{objectsx}

Let $(G_x \mid x \in \N_0)$ be the coclass family defined by $\eta \in 
H^3(P, T)$. Recall from Section \ref{infinite} that $\L = \{ L \leq P \mid L 
\mbox{ is elementary abelian with } [\rho_L] = 0\}$. Here we define
\[ \L_\eta = \{ L \in \L \mid \eta_L = 0 \}.\]

\begin{lemma} \label{step0}
Let $L$ be an elementary abelian subgroup of $P$ and $x \in \N_0$.
\begin{items}
\item[\rm (a)]
$\ol{L}_x$ splits over $M_x$ if and only if $L \in \L_\eta$.
\item[\rm (b)]
If $\ol{L}_x$ splits over $M_x$, then $\ol{L}$ splits over $T$.
\end{items}
\end{lemma}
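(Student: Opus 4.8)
\textbf{Proof plan for Lemma \ref{step0}.}

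The plan is to reduce both statements to the splitting theorem machinery of Section \ref{split} applied to the module $M_x = T/p^{x+e}T$ with $e = 3m$ and $P$ the fixed quotient. Recall that $\ol{L}_x$ is the extension of $L$ by $M_x$ via the restricted cocycle $(\rho_x + \eta_x)_L = (\rho_x)_L + (\eta_x)_L$, so $\ol{L}_x$ splits over $M_x$ precisely when $[(\rho_x)_L + (\eta_x)_L] = 0$ in $H^2(L, M_x)$. The key point is that $pro_x$ and the complement $K^2$ realise the natural direct-sum decomposition of Theorem \ref{thm18}: writing $H^2(L, M_x) \cong H^2(L,T) \oplus H^3(L,T)$, the class $[(\rho_x)_L]$ lies in the first summand (it is $pro_x$ applied to $[\rho_L]$) and $[(\eta_x)_L]$ lies in the second summand (it is $inc_{r,m}$ applied to the image of $\eta_L$ under $div_x \circ con$). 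Since $r = x+e \geq 2m$, Theorem \ref{thm18} applies and the decomposition is natural with respect to restriction to $L$, so the two classes live in complementary summands and their sum vanishes if and only if each vanishes separately.

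\textbf{Part (a).} With the above in hand, $[(\rho_x)_L + (\eta_x)_L] = 0$ is equivalent to $[(\rho_x)_L] = 0$ and $[(\eta_x)_L] = 0$. For the second condition: $div_x \circ con$ is an isomorphism from $K^2(L, M_x)$ onto $H^3(L, T)$, and under restriction-compatibility this identifies $[(\eta_x)_L]$ with $\eta_L \in H^3(L,T)$; hence $[(\eta_x)_L] = 0 \iff \eta_L = 0$. For the first condition: $pro_x : H^2(L,T) \to H^2(L, T/p^{x+e}T)$ is injective for $x+e \geq 2m$ (this is exactly the injectivity of $pro_{r-m}$, equivalently the left-hand vertical in the diagram of Theorem \ref{thm18}; alternatively it follows from $inc_r$ vanishing by transfer since $p^{x+e} \geq |L|$), so $[(\rho_x)_L] = pro_x([\rho_L]) = 0 \iff [\rho_L] = 0 \iff L \in \L$. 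Combining, $\ol{L}_x$ splits over $M_x$ $\iff$ $L \in \L$ and $\eta_L = 0$ $\iff$ $L \in \L_\eta$.

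\textbf{Part (b).} If $\ol{L}_x$ splits over $M_x$, then by (a) we have $L \in \L_\eta \subseteq \L$, i.e. $[\rho_L] = 0$, which by the description of $\L$ in Section \ref{infinite} is exactly the statement that $\ol{L}$ splits over $T$. So (b) is immediate from (a) once (a) is established; alternatively one can argue directly that a complement to $M_x$ in $\ol{L}_x$ gives a map $L \to T$ whose coboundary absorbs $\rho_L$ modulo $p^{x+e}T$, and then invoke injectivity of $pro_x$ on $H^2(L,T)$ again to lift. I expect the only real subtlety to be bookkeeping: making sure the identifications in Theorem \ref{thm18} and in the definition of $\eta_x$ (via $div_x \circ con$ and the chosen complement $K^2$) are applied compatibly under restriction, and that the hypothesis $x + e = x + 3m \geq 2m$ needed for Theorem \ref{thm18} is visibly satisfied for all $x \in \N_0$; everything else is formal.
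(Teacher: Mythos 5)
Your argument is correct and is essentially the paper's own proof: both reduce the splitting of $\ol{L}_x$ over $M_x$ to the fact that $[(\rho_x)_L]$ and $[(\eta_x)_L]$ lie in the two complementary summands of the natural decomposition $H^2(L,M_x) \cong H^2(L,T) \oplus H^3(L,T)$, then use injectivity of $pro$ on $H^2(L,T)$ and the identification of the second summand with $H^3(L,T)$ to conclude $L \in \L$ and $\eta_L = 0$, with (b) an immediate consequence of (a). The only cosmetic difference is that the paper runs the argument at the cocycle level via Lemma \ref{lem18}(c), since $res_L(\eta_x)$ lands in $J^{2,*}(L,M_x)$ rather than in $K^2(L,M_x)$, whereas you invoke the naturality of the splitting of Theorem \ref{thm18} directly on cohomology classes.
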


\begin{proof}
(a) Suppose that $\ol{L}_x$ splits over $M_x$. Then $res_L(\rho_x + \eta_x) 
= res_L(\rho_x) + res_L(\eta_x) \in B^2(L, M_x)$. Let $|L| = p^l$ and $|P|
= p^m$. By the construction of $\eta_x$ from $\eta$, it follows that the 
image of $\eta_x$ is contained in $p^{x+e-m}T / p^{x+e}T \leq p^mT/p^{x+e}T 
\leq p^l T/p^{x+e}T$. Lemma \ref{lem18}(c) thus implies that $res_L(\eta_x) 
= \alpha + \beta \in B^2(L, M_x) + K^2(L, M_x)$. Further, $res_L(\rho_x) = 
res_L(pro_{x+e}(\rho)) = pro_{x+e}(res_L(\rho)) \in I^2(L, T/p^{x+e}T)$. As 
$K^2(L, T/p^{x+e}T)$ is a complement to $I^2(L, T/p^{x+e}T)$ and 
$B^2(L, T/p^{x+e}T) \leq I^2(L, T/p^{x+e}T)$, it follows that $\beta = 0$. 
Thus $res_L(\eta_x) \in B^2(L, M_x)$ and $res_L(\rho_x) \in B^2(L, M_x)$. 
Hence $res_L(\rho) \in B^2(L, T)$ and $L \in \L$. Further, $res_L(\eta_x) 
\in B^2(L, M_x)$ and $[res_L(\eta_x)] = 0$. This implies that $\eta_L = 0$ 
and thus yields $L \in \L_\eta$. The converse follows with similar arguments.

(b) If $\ol{L}_x$ splits over $M_x$, then (a) implies that $L \in \L$
and thus $\ol{L}$ splits over $T$.
\end{proof}

Let $L \in \L_\eta$. Then as in Lemma \ref{step0}(a), it follows that
$res_L(\eta_x) \in B^2(L, M_x)$. As $\eta_x$ is defined as an element
of $K^n(P, M_x)$, it follows from Lemma \ref{lem18} that $res_L(\eta_x) 
\in B^2(L, M_x) \cap J^n(L, M_x) \leq p^{x+e-l-k} B^2(L, M_x)$ for $l =
\log_p(|L|) \leq m$ and $p^k = exp(H^2(L, T)) \leq p^l \leq p^m$. Hence 
there exists a map 
\[ \omega_{L,x} : L \ra p^{x+e-2m} M_x\] with 
$\eta_x(l,h) = \omega_{L,x}(lh) - \omega_{L,x}(l)^h - \omega_{L,x}(h)$ 
for $l,h \in L$. In other words, $res_L(\eta_x) = -\Delta_1(\omega_{L,x})$. 
Recall that the map $\eta_x$ is chosen so that $\eta_x = div(\eta_{x+1})$ 
for each $x \in \N_0$. Similarly, we choose $\omega_{L,x}$ so that 
$\omega_{L,x} = div(\omega_{L,x+1})$ holds for each $x \in \N_0$.

Again, let $L \in \L_\eta$. Then $C_L = \{(l, t_L(l) \mid l \in L\}$ is a 
complement to $T$ in $\ol{L}$ in the infinite pro-$p$-group $S$. We define
\[ C_{L,x} = \{ (l, pro_x(t_L(l)) + \omega_{L,x}(l)) \mid l \in L\}.\]

\begin{lemma} \label{imageobj}
Let $x \in \N_0$ and $L \in \L_\eta$. Then $C_{L, x}$ is a subgroup
of $G_x$ and hence a complement to $M_x$ in $\ol{L}_x$.
\end{lemma}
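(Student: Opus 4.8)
The plan is to verify directly that $C_{L,x}$ is closed under multiplication inside $G_x = \Ext(\rho_x + \eta_x)$; since $C_{L,x}$ is finite (being contained in the finite group $\ol{L}_x$) and contains the identity, closure under multiplication suffices to make it a subgroup. Once it is a subgroup, the fact that it maps isomorphically onto $L$ under the natural epimorphism $G_x \ra P$ (its elements are indexed bijectively by $l \in L$ and have first coordinate $l$) shows $C_{L,x} \cap M_x = \{1\}$ and $C_{L,x} M_x = \ol{L}_x$, so it is a complement to $M_x$ in $\ol{L}_x$.

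So the core computation is the following. Take two elements $(l, pro_x(t_L(l)) + \omega_{L,x}(l))$ and $(h, pro_x(t_L(h)) + \omega_{L,x}(h))$ of $C_{L,x}$ and multiply them using the multiplication rule of $G_x$, namely $(g,m)(h,n) = (gh, m^h + n + \rho_x(g,h) + \eta_x(g,h))$. The first coordinate of the product is $lh \in L$, so I need to check that the second coordinate equals $pro_x(t_L(lh)) + \omega_{L,x}(lh)$. Expanding, the second coordinate is
\[ pro_x(t_L(l))^h + \omega_{L,x}(l)^h + pro_x(t_L(h)) + \omega_{L,x}(h) + \rho_x(l,h) + \eta_x(l,h). \]
Now I would split this into the "$pro_x(t_L)$-part" and the "$\omega_{L,x}$-part". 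For the first part, recall from Section \ref{infinite} that $C_L$ is a complement to $T$ in $\ol{L}$ in $S = \Ext(\rho)$, which gives the identity $\rho(l,h) = t_L(lh) - t_L(l)^h - t_L(h)$ in $T$; applying the $P$-module homomorphism $pro_x$ and using $\rho_x = pro_x(\rho)$ (more precisely $\rho_x = pro_{x+e}(\rho)$, with $pro_x$ denoting projection onto $M_x = T/p^{x+e}T$), this yields $\rho_x(l,h) = pro_x(t_L(lh)) - pro_x(t_L(l))^h - pro_x(t_L(h))$. For the second part, recall from the paragraph preceding the statement that $L \in \L_\eta$ gives a map $\omega_{L,x} : L \ra p^{x+e-2m}M_x$ with $res_L(\eta_x) = -\Delta_1(\omega_{L,x})$, i.e. $\eta_x(l,h) = \omega_{L,x}(lh) - \omega_{L,x}(l)^h - \omega_{L,x}(h)$. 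Adding the two identities and cancelling shows the second coordinate of the product is exactly $pro_x(t_L(lh)) + \omega_{L,x}(lh)$, so $C_{L,x}$ is closed under multiplication.

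This is essentially all that is needed; the only mild subtlety — which I would mention but not belabour — is bookkeeping with the index $e = 3m$ and making sure that $\omega_{L,x}$ genuinely exists as a map into $M_x$ (not merely that $res_L(\eta_x)$ is a coboundary over some larger module), but this is precisely what was established by the computation $res_L(\eta_x) \in p^{x+e-l-k}B^2(L, M_x)$ in the run-up to the statement, so it may be taken as given here. I do not expect a genuine obstacle: the proof is a direct one-line-per-coordinate verification once the two coboundary identities for $t_L$ (coming from $C_L \leq S$) and for $\omega_{L,x}$ (coming from $L \in \L_\eta$ and Lemma \ref{lem18}) are in hand.
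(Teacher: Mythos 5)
Your proof is correct and is essentially the paper's own argument: expand the product in $G_x$ and cancel using the two coboundary identities $\rho(l,h) = t_L(lh) - t_L(l)^h - t_L(h)$ (from $C_L \leq S$) and $\eta_x(l,h) = \omega_{L,x}(lh) - \omega_{L,x}(l)^h - \omega_{L,x}(h)$, then note $C_{L,x}\cap M_x = \{1\}$ and $C_{L,x}\cong L$ under the projection to $P$. No gaps; the remarks about finiteness and the existence of $\omega_{L,x}$ are fine and consistent with the setup preceding the lemma.
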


\begin{proof}
This follows directly from Section \ref{compl}. We include an outline of the
proof for completeness. As $C_L$ is a subgroup of the infinite group $S$,
we obtain that $(l, t_L(l)) (h, t_L(h)) = (lh, t_L(l)^h + t_L(h) + \rho(l,h)) 
= (lh, t_L(lh))$. Hence $t_L(l)^h + t_L(h) + \rho(l,h) = t_L(lh)$. Similarly,
in $G_x$ we obtain
\begin{eqnarray*}
 &&  (l, pro_x(t_L(l))+\omega_{L,x}(l)) (h, pro_x(t_L(h))+\omega_{L,x}(h)) \\
 &=& (lh, pro_x(t_L(l))^h+pro_x(t_L(h))+\omega_{L,x}(l)^h + \omega_{L,x}(h) +
      \rho_x(l,h) + \eta_x(l,h)) \\
 &=& (lh, pro_x(t_L(lh)) + \omega_{L,x}(lh) )
\end{eqnarray*}
Hence $C_{L, x}$ is a subgroup of $G_x$. By construction, $C_{L, x} \cap 
M_x = \{1\}$ and $C_{L, x} \cong L$ via the natural epimorphism $G_x \ra P$. 
Thus $C_{L,x}$ is a complement to $M_x$ in $\ol{L}_x$.
\end{proof}

For $L \in \L_\eta$ we denote $t_{L,x}(l) = pro_x(t_L(l)) + \omega_{L,x}(l) \in 
M_x$ and $c_{L,x}(l) = (l, t_{L,x}(l)) \in G_x$. Then $C_{L,x} = \{ c_{L,x}(l)
\mid l \in L \}$. Further, for $\gamma \in Z^1(L, M_x)$ we write 
\[ C_{L,x}(\gamma) 
   = \{ (l, t_{L,x}(l) + \gamma(l)) \mid l \in L \}.\]

As $C_{L,x}$ is a complement to $M_x$ in $\ol{L}_x$ by Lemma \ref{imageobj},
it follows that $C_{L,x}(\gamma)$ is also a complement to $M_x$ in $\ol{L}_x$.

Let $\O_x(L)$ denote the set of elementary abelian subgroups of $M_x$ 
which are centralised by $L$. As $M_x$ is abelian of rank $d$, there 
exists a unique maximal elementary abelian subgroup $A_x = p^{x+e-1} M_x$ 
in $M_x$ and this has rank $d$ as well. The subgroups in $\O_x(L)$ are 
the subgroups of the centraliser of $L$ in $A_x$. If $O \in \O_x(L)$, 
then $O$ is centralised by $L$ and thus also by $\ol{L}_x$ and by 
$C_{L,x}(\gamma)$. Hence the subgroup of $G_x$ generated by $O$ and 
$C_{L,x}(\gamma)$ is a direct product $C_{L,x}(\gamma) \times O$. We define

\[ \C_x(L) = \{ C_{L, x}(\gamma) \times O
   \mid \gamma \in Z^1(L,M_x) \mbox{ and } O \in \O_x(L) \}. \]

\begin{theorem}
The objects in the Quillen category $\A_p(G_x)$ are the disjoint union
of the sets $\C_x(L)$ for $L \in \L_\eta$.
\end{theorem}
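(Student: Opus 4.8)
The plan is to show two containments: every object of $\A_p(G_x)$ lies in some $\C_x(L)$ with $L \in \L_\eta$, and conversely each member of each $\C_x(L)$ is a genuine elementary abelian subgroup of $G_x$; disjointness then follows from the fact that distinct $L$ give distinct images under the natural epimorphism $G_x \ra P$. First I would take an arbitrary object $E \in \A_p(G_x)$, i.e. an elementary abelian subgroup of $G_x$. Set $L = EM_x/M_x \leq P$, so $L$ is elementary abelian and $E$ surjects onto $L$. The key structural point is to analyse $E \cap M_x$: unlike the pro-$p$ case (Theorem on objects of $\A_p(S)$), here $M_x$ is finite and $E \cap M_x$ need not be trivial, but it is an elementary abelian subgroup of $M_x$ centralised by $E$, hence by $L$; thus $E \cap M_x \in \O_x(L)$, and in fact $E \cap M_x \leq C_{A_x}(L)$ where $A_x = p^{x+e-1}M_x$ is the unique maximal elementary abelian subgroup of $M_x$.

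Next I would argue that $L \in \L_\eta$. Since $E$ is a subgroup of $G_x$ mapping onto $L$, the preimage $\ol{L}_x$ contains $E \cdot M_x$, and the image of $E$ in $\ol{L}_x/(E\cap M_x)$ is a complement to $M_x/(E\cap M_x)$; pushing this down one sees $\ol{L}_x$ splits over $M_x$, so Lemma \ref{step0}(a) gives $L \in \L_\eta$. Now pick a complement $O$ to $E \cap M_x$ inside a maximal element of $\O_x(L)$ if needed — more precisely, write $O = E \cap M_x$ itself, which already lies in $\O_x(L)$. Then $E/O$ embeds as a complement to $M_x/O$ in $\ol{L}_x/O$; lifting back, one checks that $E$ is generated by $O$ together with a complement to $M_x$ in $\ol{L}_x$ that centralises $O$. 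By Section \ref{compl} every complement to $M_x$ in $\ol{L}_x$ has the form $C_{L,x}(\gamma)$ for a unique $\gamma \in Z^1(L, M_x)$ (using Lemma \ref{imageobj} to know $C_{L,x}$ is one such complement), and since this complement centralises $O$ we get $E = C_{L,x}(\gamma) \times O \in \C_x(L)$.

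For the reverse inclusion I would simply invoke the discussion preceding the statement: for $L \in \L_\eta$, $\gamma \in Z^1(L,M_x)$ and $O \in \O_x(L)$, the set $C_{L,x}(\gamma)$ is a complement to $M_x$ in $\ol{L}_x$ (hence a subgroup isomorphic to $L$, so elementary abelian since $L \leq P$ is), and $O$ is an elementary abelian subgroup of $M_x$ centralised by $C_{L,x}(\gamma)$, so the product $C_{L,x}(\gamma) \times O$ is elementary abelian and is an object of $\A_p(G_x)$. Finally, disjointness of the union: if $C_{L,x}(\gamma)\times O$ and $C_{H,x}(\sigma)\times O'$ coincide, then applying the natural epimorphism $G_x \ra P$ shows $L = H$ (both equal the image), and then uniqueness of the cocycle $\gamma$ describing a complement forces $\gamma = \sigma$ and hence $O = O'$.

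The main obstacle I expect is the step extracting the decomposition $E = C_{L,x}(\gamma) \times O$ — that is, showing that an elementary abelian subgroup meeting $M_x$ nontrivially splits compatibly as (a complement to $M_x$ in $\ol{L}_x$) times (a subgroup of $M_x$). This requires care because a priori $E$ need not contain any complement to $M_x$ in $\ol L_x$ on the nose; one must choose the complement inside $E\cdot(\text{something})$ correctly. The clean way is: $O := E \cap M_x$ is normal in $\ol{L}_x$ (it is central in $M_x$ and centralised by $L$), pass to $\ol{L}_x / O$, note $M_x/O$ is still an elementary-abelian-by-torsion-free quotient of the right shape so that $E/O$ is a complement there, lift to a complement $C$ of $M_x$ in $\ol{L}_x$ with $C \cap M_x = \{1\}$ and $CO = E$ (possible since $O$ is central), and observe $C$ centralises $O$ because $E$ is abelian. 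Everything else is bookkeeping already set up in Sections \ref{infinite} and \ref{compl}.
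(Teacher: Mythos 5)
Your argument is essentially the paper's proof: set $O = E \cap M_x$ and $L = EM_x/M_x$, split off a complement to $O$ inside $E$, observe that it is a complement to $M_x$ in $\ol{L}_x$, invoke Lemma \ref{step0} to get $L \in \L_\eta$, identify the complement as some $C_{L,x}(\gamma)$, and note the converse is immediate from the discussion preceding the statement. Two small repairs: the existence of a subgroup $C$ with $CO = E$ and $C \cap M_x = \{1\}$ holds because $E$ is elementary abelian (an $\mathbb{F}_p$-vector space, so $O$ has a complement inside $E$), not ``because $O$ is central'' --- centrality alone never produces a complement --- and this same observation is what justifies your earlier claim that the splitting of $\ol{L}_x/O$ over $M_x/O$ ``pushes down'' to a splitting of $\ol{L}_x$ over $M_x$, which is not valid as a general principle. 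Also, in your disjointness remark the cocycle $\gamma$ is \emph{not} uniquely determined by the object (the paper explicitly notes that the description of the objects in $\C_x(L)$ is redundant); disjointness of the union over distinct $L$ needs only that $L = EM_x/M_x$ is determined by $E$, which you already have, so this slip is harmless.
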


\begin{proof}
Let $E$ be an object in $\A_p(G_x)$. Let $O = E \cap M_x$ and $L = E M_x/M_x
\leq P$. As $E$ is elementary abelian, there exists a complement $F$ to $O$
in $E$. It follows that $F$ is a complement to $M_x$ in $\ol{L}_x$ and hence 
$L \in \L_\eta$ by Lemma \ref{step0}. Further, we obtain that 
$F = C_{L,x}(\gamma)$ for some $\gamma \in Z^1(L, M_x)$, since every 
complement to $M_x$ in $\ol{L}_x$ is of this form. Finally, 
$E = F \times O$. Thus $E \in \C_x(L)$. The converse is obvious.
\end{proof}

Thus each object $E$ in $\A_p(G_x)$ has the form $C_{L,x}(\gamma) \times O$.
The subgroups $L$ and $O$ are uniquely defined by $E$. We note that different 
elements $\gamma, \delta \in Z^1(L, M_x)$ can define the same object 
$C_{L,x}(\gamma) \times O = C_{L,x}(\delta) \times O$ and thus the description
of the objects in $C_x(L)$ is not irredundant.

\subsection{The morphisms of $\A_p(G_x)$}
\label{morphismsx}

In this subsection we describe the morphisms of $\A_p(G_x)$. As above, 
let $(G_x \mid x \in \N_0)$ be the coclass family defined by $\eta \in
H^3(P,T)$. We investigate under which conditions an object of $\A_p(G_x)$
conjugates under $g \in G_x$ into another object.

For $L, H \in \L_\eta$ and $g \in G_x$ with $L^g \leq H$ we define
\[ \zeta_{L,H,g,x} : L \ra G_x : c_{H,x}(l^g)^{-g^{-1}} \cdot c_{L,x}(l) \]
and for $\sigma \in Z^1(H,M_x)$ we denote
\[ \sigma_L^g : L \ra T : l \ms \sigma(l^g)^{g^{-1}} \in Z^1(L, M_x).\]

\begin{theorem} \label{morphx}
Let $A = C_{L,x}(\gamma) \times O$ and $B = C_{H,x}(\sigma) \times U$ be 
two objects in $\A_p(G_x)$ and let $g \in G_x$. Then $A^g \leq B$ if and 
only if $L^g \leq H$, and $O^g \leq U$, and there exists $\delta \in 
Z^1(L,M_x)$ with image in $U^{g^{-1}}$ so that $\sigma_L^g - \gamma + 
\delta = \zeta_{L,H,g,x}$.
\end{theorem}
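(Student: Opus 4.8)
The plan is to mimic the proof of Theorem \ref{morph} from the infinite group $S$, carrying the extra summand $O$ (respectively $U$) alongside. First I would observe that conjugation by $g$ is compatible with the natural epimorphism $G_x \ra P$, so $A^g \leq B$ immediately forces $L^g \leq H$. Since $M_x$ is abelian and normal in $G_x$, conjugation by $g$ acts on $M_x$ as multiplication by $\epsilon(g) = w$, and it sends $O$ (a subgroup of $M_x$) to $O^w \leq M_x$; comparing intersections with $M_x$ on both sides, $A \cap M_x = O$ and $B \cap M_x = U$, so $A^g \cap M_x = O^g \leq U$. This disposes of the two easy containments.

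Next I would handle the complement part. Write a general element of $A = C_{L,x}(\gamma) \times O$ as $(l, t_{L,x}(l) + \gamma(l)) \cdot (1,o)$ with $l \in L$, $o \in O$. Conjugating by $g$ and projecting to $P$ gives $l^g \in H$, so the image lands in $\ol{H}_x$; since it lies in $B$, it must factor as $c_{H,x}(l^g)(1,\sigma(l^g))\cdot(1,u)$ for some $u \in U$. Because $g$ acts on $M_x$ as $w$, the element $(1,o)^g = (1,o^w)$ lies in $O^g \leq U$, so it can be absorbed into the $U$-part; hence it suffices to analyse the image of $c_{L,x}(l)(1,\gamma(l))$. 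Exactly as in Theorem \ref{morph}, rearranging $c_{L,x}(l)^g (1,\gamma(l)^g) = c_{H,x}(l^g)(1,\sigma(l^g) + u_l)$ for the appropriate $u_l \in U$ and conjugating back by $g^{-1}$ yields $\zeta_{L,H,g,x}(l) = \sigma_L^g(l) - \gamma(l) + \delta(l)$, where $\delta(l) := u_l^{g^{-1}} \in U^{g^{-1}}$; one checks $\delta$ is a cocycle in $Z^1(L, M_x)$ because it is the difference of $\zeta_{L,H,g,x} - \sigma_L^g + \gamma$, i.e.\ of things that are forced to be cocycles once the product lies in the subgroup $B$. Conversely, given $L^g \leq H$, $O^g \leq U$ and such a $\delta$, running the same computation backwards shows every element of $A^g$ lies in $B$.

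The main obstacle I expect is bookkeeping rather than conceptual: one must be careful that $\delta$ genuinely lies in $Z^1(L,M_x)$ and has image inside $U^{g^{-1}}$, and that the $O$-component really does get absorbed into $U$ without interfering with the complement computation. Concretely, the subtle point is that $C_{L,x}(\gamma)$ and $O$ commute inside $G_x$ (so that $A$ is an internal direct product), which relies on $O \in \O_x(L)$ being centralised by $L$ and hence by $\ol{L}_x$; one needs the analogous fact on the $B$ side, and one needs $U^{g^{-1}}$ to be centralised by $L$ so that the correction term $\delta$ with values in $U^{g^{-1}}$ indeed produces another complement $C_{L,x}(\gamma - \delta)$ of the right shape. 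Granting the setup of Section \ref{objectsx} this is automatic, so the proof is a direct, if slightly lengthy, expansion paralleling the proof of Theorem \ref{morph}, and I would present it in that compressed style, writing ``the converse follows with similar arguments'' for the reverse implication.
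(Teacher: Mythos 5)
Your argument is correct and essentially reproduces the paper's own proof: both derive $L^g \leq H$ and $O^g \leq U$ from the projection $G_x \ra P$ and intersection with $M_x$, then compare $(C_{L,x}(\gamma))^g$ with the corresponding part of $C_{H,x}(\sigma)$ inside $B = C_{H,x}(\sigma) \times U$ (absorbing the $O$-component into $U$) and conjugate back by $g^{-1}$ to obtain $\delta$ with values in $U^{g^{-1}}$, treating the converse symmetrically. One small tightening: justify $\delta \in Z^1(L,M_x)$ not as a ``difference of cocycles'' (note $\zeta_{L,H,g,x}$ is not a cocycle a priori) but, as the paper does, by observing that $(C_{L,x}(\gamma))^g$ and the preimage $W$ of $L^g$ in $C_{H,x}(\sigma)$ are both complements to $U$ in $W \times U$, so by the complement correspondence of Section \ref{compl} they differ by some $\delta' \in Z^1(L^g,U)$.
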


\begin{proof}
First suppose that $A^g \leq B$. As $L = A M_x/M_x$ and $H = B M_x / M_x$,
it follows that $L^g \leq H$. As $O = A \cap M_x$ and $U = B \cap M_x$,
it follows that $O^g \leq U$. It thus remains to show the existence of
$\delta$. 

Recall that $C_{H,x}(\sigma)$ is a complement to $M_x$ in $\ol{H}_x$. 
Let $\epsilon : G_x \ra P$ be the natural homomorphism induced by the 
extension structure of $G_x$. Then restricting $\epsilon$ to subgroups
induces an isomorphism $C_{H,x}(\sigma) \ra H$. Let $W$ be the preimage 
of $L^g$ under this isomorphism. Then $W \times U$ is the full preimage 
of $L^g$ under the restriction of $\epsilon$ to $B$.

Note that $(C_{L,x}(\gamma))^g$ is contained in $B$ and is mapped onto
$L^g$ under $\epsilon$. Thus $(C_{L,x}(\gamma))^g$ is contained in the 
full preimage of $L^g$ under the restriction of $\epsilon$ to $B$. Hence
$(C_{L,x}(\gamma))^g \leq W \times U$ follows. As $C_{L,x}(\gamma)$ is
a complement to $M_x$ in $\ol{L}_x$, it follows that $(C_{L,x}(\gamma))^g$
intersects trivially with $U$. Thus $(C_{L,x}(\gamma))^g$ is a complement
to $U$ in $W \times U$. 

The elements of $(C_{L,x}(\gamma))^g$ are of the form 
$(c_{L,x}(l) (1, \gamma(l)))^g = (c_{L,x}(l))^g (1, \gamma(l)^g)$ and
this element maps onto $l^g$ under $\epsilon$. The elements of $W$ are
of the form $c_H(l^g) (1, \sigma(l^g))$ and this element maps onto $l^g$
under $\epsilon$ as well. As $(C_{L,x}(\gamma))^g$ and $W$ are both
complements to $U$ in $W \times U$ and $W \cong L^g$, there exists
$\delta' \in Z^1(L^g, U)$ with 
$(c_{L,x}(l))^g (1, \gamma(l)^g) 
  = c_{H,x}(l^g) (1, \sigma(l^g) + \delta'(l^g))$.
This translates to 
$c_{H,x}(l^g)^{-1} (c_{L,x}(l))^g 
  = (1, \sigma(l^g) + \delta'(l^g) - \gamma(l)^g)$.
Conjugating with $g^{-1}$ now produces
$\zeta_{L,H,g,x}(l) = c_{H,x}(l^g)^{-g^{-1}} c_{L,x}(l) 
  = (1, \sigma(l^g)^{g^{-1}} + \delta'(l^g)^{g^{-1}} - \gamma(l))$.
Defining $\delta : L \ra U^{g^{-1}} : l \ms \delta'(l^g)^{g^{-1}}$
yields that $\sigma_L^g - \gamma + \delta = \zeta_{L,H,g,x}$ as
desired.

The converse follows with similar arguments.
\end{proof}

We investigate $\zeta_{L,H,g,x}$ in more detail using the extension structure 
of $G_x$. Recall that $G_x$ is an extension of $P$ by $M$ via $\nu_x := 
\rho_x + \eta_x \in Z^2(P,M_x)$. For $L, H \in \L_\eta$ and $w \in P$ let
\begin{eqnarray*}
\varphi_{L, H, w,x} &:& L \ra M_x : 
    l \ms (\omega_{L,x}(l)^w - \omega_{H,x}(l^w) 
             + \eta_x(l, w) - \eta_x(w,l^w) )^{w^{-1}} \\
\Zeta_{L, H, w, x} &:& L \ra M_x : 
    l \ms (t_{L,x}(l)^w -t_{H,x}(l^w) 
             + \nu_x(l, w) - \nu_x(w, l^w))^{w^{-1}}.
\end{eqnarray*}
For $m \in M_x$ let $\lambda_m : L \ra M_x : l \ms [m, l] \in B^1(L, M_x)$
and note that $\lambda_m^w : L \ra M_x : l \ms [m,l^w]^{w^{-1}} = 
[m^{w^{-1}}, l] \in B^1(L,M_x)$. The following lemma is dual to Lemma 
\ref{zeta} and can be proved with similar arguments.

\begin{lemma} \label{zetax}
Let $L, H \in \L_\eta$ and $g = (w,m) \in G_x$ with $L^g \leq H$. Let $h 
= (w,t) \in S$ with $pro_x(t) = m$. Then 
\begin{items}
\item[\rm (a)]
$\zeta_{L,H,g,x} = \Zeta_{L,H,w,x} - \lambda_m^w$.
\item[\rm (b)]
if $\zeta_{L,H,g,x} \in Z^1(L,M_x)$, then $\zeta_{L,H,g',x} \in Z^1(L,M_x)$
for all $g' \in gM_x$.
\item[\rm (c)]
$\zeta_{L,H,g,x} = pro_x(\zeta_{L,H,h}) + \varphi_{L,H,w,x}$ and 
$\Zeta_{L,H,w,x} = pro_x(\Zeta_{L,H,w}) + \varphi_{L,H,w,x}$.
\item[\rm (d)]
$g$ centralises $C_{L,x}(\gamma)$ if and only if $w$ centralises $L$ and
$\zeta_{L,L,g,x} = \gamma^g - \gamma$.
\end{items}
\end{lemma}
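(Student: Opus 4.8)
The plan is to prove Lemma \ref{zetax} as a direct analogue of Lemma \ref{zeta}, using the explicit description of $G_x$ as $\Ext(\nu_x)$ and exploiting the decomposition $\nu_x = \rho_x + \eta_x$ together with $t_{L,x} = \mathit{pro}_x \circ t_L + \omega_{L,x}$.

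For part (a), I would simply repeat the long expansion carried out in the proof of Lemma \ref{zeta}(a), but now inside $G_x = \Ext(\nu_x)$ instead of $S = \Ext(\rho)$. That is, compute $\zeta_{L,H,g,x}(l)^g = c_{H,x}(l^g)^{-1} \cdot c_{L,x}(l)^g$ with $g = (w,m)$, multiply out the four factors $(l^g{}^{-1}, \ldots)\cdot(w^{-1},\ldots)\cdot(l, t_{L,x}(l))\cdot(w,m)$ in $\Ext(\nu_x)$, collapse the $P$-component to $1$, and identify the $M_x$-component. The same cocycle identities used there ($\nu_x(k,k^{-1})^k = \nu_x(k^{-1},k)$ and the associativity relation for $\nu_x$) hold since $\nu_x$ is a genuine $2$-cocycle, and the only genuinely new term is $m - m^k = -[m,k] = -\lambda_m^w(l)^w$, giving $\zeta_{L,H,g,x} = \Zeta_{L,H,w,x} - \lambda_m^w$. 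Part (b) is then immediate from (a), exactly as in Lemma \ref{zeta}(b): $\Zeta_{L,H,w,x}$ depends only on $w$, and $\lambda_m^w \in B^1(L,M_x)$ always, so membership in $Z^1(L,M_x)$ is unaffected by changing $m$, i.e. by replacing $g$ by any $g' \in gM_x$.

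Part (c) is the step I expect to require the most care, since it is the one with no exact counterpart in Lemma \ref{zeta}. Here I would substitute $t_{L,x}(l) = \mathit{pro}_x(t_L(l)) + \omega_{L,x}(l)$ and $t_{H,x}(l^w) = \mathit{pro}_x(t_H(l^w)) + \omega_{H,x}(l^w)$ and $\nu_x = \mathit{pro}_x(\rho) + \eta_x$ into the definition of $\Zeta_{L,H,w,x}$, using that $\mathit{pro}_x$ is a $P$-module homomorphism so it commutes with the $P$-action $(\cdot)^w$ and $(\cdot)^{w^{-1}}$. Collecting the $\mathit{pro}_x$-terms reproduces exactly $\mathit{pro}_x(\Zeta_{L,H,w})$ — recall $\Zeta_{L,H,w}$ was defined in Section \ref{infinite} from $t_L$, $t_H$ and $\rho$ — and collecting the remaining terms gives precisely $\varphi_{L,H,w,x}$ as defined just above the lemma. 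This proves the second identity; the first then follows by subtracting $\lambda_m^w = \mathit{pro}_x(\lambda_t^w)$ (which holds because $\mathit{pro}_x(t) = m$ and $\mathit{pro}_x$ commutes with the commutator map) and invoking part (a) together with Lemma \ref{zeta}(a), namely $\zeta_{L,H,h} = \Zeta_{L,H,w} - \lambda_t^w$.

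Finally, part (d) is the analogue of Lemma \ref{zeta}(d) and is proved the same way: $g = (w,m)$ centralises $C_{L,x}(\gamma)$ iff $(l, t_{L,x}(l)+\gamma(l))\,g = g\,(l,t_{L,x}(l)+\gamma(l))$ for all $l \in L$; expanding both sides in $\Ext(\nu_x)$ and comparing $P$-components forces $w$ to centralise $L$, and comparing $M_x$-components then gives $\Zeta_{L,L,w,x}(l)^w - \lambda_m(l) = \gamma(l) - \gamma(l)^w$, which by part (a) rewrites as $\zeta_{L,L,g,x} = \gamma^g - \gamma$; the converse is the same computation read backwards. The main obstacle throughout is purely bookkeeping — keeping track of which terms carry a $\mathit{pro}_x$ and making sure the $P$-action, $\mathit{pro}_x$, and the commutator all commute with one another — but no new idea beyond Lemma \ref{zeta} and the construction of $\nu_x$, $t_{L,x}$, $\omega_{L,x}$ is needed.
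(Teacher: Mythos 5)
Your proposal is correct and follows exactly the route the paper intends: the paper gives no separate proof for this lemma, stating only that it is dual to Lemma \ref{zeta} and provable by the same arguments, which is precisely what you carry out. Your extra verification for part (c) — substituting $t_{L,x} = pro_x\circ t_L + \omega_{L,x}$ and $\nu_x = pro_x(\rho) + \eta_x$ and collecting the $pro_x$-terms against the remainder, then using $\lambda_m^w = pro_x(\lambda_t^w)$ — is the right (and only genuinely new) computation, and it checks out.
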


We investigate in more detail how $\Zeta_{L,H,w}$ and $\Zeta_{L,H,w,x}$
are connected to each other.

\begin{lemma} \label{prozeta}
There exists $x_0 \in \N$ so that for all $L, H \in \L_\eta$ and all
$w \in P$ we obtain that
\[ \Zeta_{L,H,w,x} \in Z^1(L, M_x) \mbox{ for some } x \geq x_0
\;\;\; \Ra \;\;\; \Zeta_{L,H,w} \in Z^1(L, T). \] 
\end{lemma}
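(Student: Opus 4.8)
The plan is to compare the finite-level cocycle $\Zeta_{L,H,w,x}$ with the $p$-adic cocycle $\Zeta_{L,H,w}$ via Lemma \ref{zetax}(c), which says $\Zeta_{L,H,w,x} = pro_x(\Zeta_{L,H,w}) + \varphi_{L,H,w,x}$. The map $\varphi_{L,H,w,x}$ is built from $\eta_x$ and the $\omega_{L,x}$, and by construction its image lies in $p^{x+e-2m}M_x$, i.e.\ very deep in $M_x$; so modulo a bounded-index subgroup of $M_x$, $\Zeta_{L,H,w,x}$ is just the reduction of $\Zeta_{L,H,w}$. Being a $1$-cocycle is the vanishing of $\Delta_1$, a finite system of linear equations in the values of the map, so the strategy is: if $\Zeta_{L,H,w,x}$ is a cocycle mod $p^{x+e}$, then $\Delta_1(\Zeta_{L,H,w})$ is divisible by $p^{x+e}$ up to the contribution of $\varphi_{L,H,w,x}$, which itself is already divisible by $p^{x+e-2m}$; hence $\Delta_1(\Zeta_{L,H,w})$ is divisible by $p^{x+e-2m}$. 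Letting $x$ grow forces $\Delta_1(\Zeta_{L,H,w}) = 0$ in $T$, i.e.\ $\Zeta_{L,H,w} \in Z^1(L,T)$.

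The first step is to make precise that $\Delta_1$ commutes with $pro_x$ and with the relevant divisions, so that $pro_{x+e}(\Delta_1(\Zeta_{L,H,w})) = \Delta_1(\Zeta_{L,H,w,x}) - \Delta_1(\varphi_{L,H,w,x})$ inside $Z^2(L,M_x)$. The second step is to record that $\varphi_{L,H,w,x}$ has image in $p^{x+e-2m}M_x$: this follows from the bound on the image of $\omega_{L,x}$ (it maps into $p^{x+e-2m}M_x$ by the discussion after Lemma \ref{step0}) together with the fact that $\eta_x$ has image in $p^{x+e-m}T/p^{x+e}T$ by its construction from $\eta \in H^3(P,T)$ via $K^2(P,M_x)$ and Lemma \ref{lem18}(c). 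Consequently $\Delta_1(\varphi_{L,H,w,x})$ lies in $p^{x+e-2m}Z^2(L,M_x)$, so when $\Zeta_{L,H,w,x}$ is a cocycle we get $pro_{x+e}(\Delta_1(\Zeta_{L,H,w})) \in p^{x+e-2m}Z^2(L,M_x)$, which lifts to $\Delta_1(\Zeta_{L,H,w}) \in p^{x+e-2m}Z^2(L,T) + p^{x+e}Z^2(L,T) = p^{x+e-2m}Z^2(L,T)$.

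The third step is the limit argument. The group $Z^2(L,T) \subseteq \mathrm{Map}(L\times L, T)$ is a finitely generated free $\Z_p$-module, so $\bigcap_{j\geq 0} p^j Z^2(L,T) = 0$. If for every $x_0$ there were some $L,H,w$ and some $x \geq x_0$ with $\Zeta_{L,H,w,x} \in Z^1(L,M_x)$ but $\Zeta_{L,H,w} \notin Z^1(L,T)$, then — since there are only finitely many triples $(L,H,w)$ — some fixed triple would work for arbitrarily large $x$, giving $\Delta_1(\Zeta_{L,H,w}) \in p^{x+e-2m}Z^2(L,T)$ for all large $x$, hence $\Delta_1(\Zeta_{L,H,w}) = 0$, a contradiction. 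So a single $x_0$ (in fact $x_0$ can be read off as soon as the divisibility exponent $x+e-2m$ exceeds the $p$-adic valuations needed, but finiteness of the triple set already suffices) works uniformly.

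The main obstacle I anticipate is bookkeeping of the various "divide by a power of $p$" identifications: one must check that $\Delta_1$ genuinely intertwines $pro_x$, $div_x$, and the reduction maps so that the identity in Lemma \ref{zetax}(c) can be differentiated term by term without losing the claimed divisibility — in particular that the image-depth bound on $\varphi_{L,H,w,x}$ survives application of $\Delta_1$ and that lifting a class from $Z^2(L,M_x)$ to $Z^2(L,T)$ does not destroy divisibility by $p^{x+e-2m}$ (which is fine precisely because $x+e-2m \le x+e$). None of this is deep, but it is exactly the kind of place where an off-by-$m$ slip would break the argument, so that is where I would spend the care.
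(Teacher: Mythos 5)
Your proposal is correct and follows essentially the same route as the paper: both rest on the decomposition $\Zeta_{L,H,w,x} = pro_x(\Zeta_{L,H,w}) + \varphi_{L,H,w,x}$ from Lemma \ref{zetax}(c), the fact that $\varphi_{L,H,w,x}$ takes values deep in $M_x$ (in $p^{x+e-2m}M_x$), and the finiteness of the set of triples $(L,H,w)$, so that a nonzero cocycle defect of $\Zeta_{L,H,w}$ cannot survive reduction for arbitrarily large $x$. The paper just packages this with an explicit $x_0 = x_{max}+m$ built from the minimal level at which each defect survives projection, rather than your $\Delta_1$-divisibility and pigeonhole phrasing (and your $2m$ bound on the depth of $\varphi_{L,H,w,x}$ is in fact the careful one).
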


\begin{proof}
Let $L, H \in \L_\eta$ and $w \in P$. For $l_1, l_2 \in L$ denote
$\delta(l_1, l_2) = \Zeta_{L,H,w}(l_1l_2) - \Zeta_{L,H,w}(l_1)^{l_2}
   - \Zeta_{L,H,w}(l_2)$.
If $\Zeta_{L,H,w} \not \in Z^1(L, T)$, then $\delta \neq 0$ and thus 
there exists $x \in \N$ with $pro_x(\delta) \neq 0$. Let $x_{L,H,w}$ 
be the minimal $x \in \N$ with $pro_x(\delta) \neq 0$ in this case. 
If $\Zeta_{L,H,w} \in Z^1(L, T)$, then set $x_{L,H,w} = 0$.

Let $x_{max}$ be the maximum of the finitely many values $x_{L,H,w}$ for
$L, H \in \L_\eta$ and $w \in P$ and define $x_0 = x_{max} + m$ for $m =
\log_p(|P|)$. Let $x \geq x_0$. Now consider $\Zeta_{L,H,w,x} \in 
Z^1(L, M_x)$ and recall that $\varphi_{L,H,w,x}$ has its image in 
$p^{x+e-m}M_x$. Thus $pro_{x-m}(\Zeta_{L,H,w}) = pro_{x-m}(\Zeta_{L,H,w,x}) 
\in Z^1(L, pro_{x-m}(M_x))$.  As $x-m \geq x_{max} \geq x_{L,H,w}$, it now 
follows that $\Zeta_{L,H,w} \in Z^1(L, T)$ by the construction of $x_0$.
\end{proof}

As a final step of this subsection we compare different elements
inducing the same morphism in $\A_p(G_x)$.

\begin{lemma} \label{samemor}
Let $g = (w,m)$ and $g' = (w, m')$ be two elements of $G_x$. Then $g$
and $g'$ induce the same morphism in $Hom(A,B)$ if and only if 
$m - m' \in C_{M_x}(L)^w = \{ n \in M_x \mid [n,l] = 1 \mbox{ for } 
l \in L\}^w$.
\end{lemma}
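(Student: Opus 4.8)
The plan is to use the fact that $g=(w,m)$ and $g'=(w,m')$ induce morphisms $A\to B$ only when they induce the \emph{same} map on the $P$-quotient, namely conjugation by $w$ on $L$, so the interesting part is what happens on $M_x$. First I would observe that, since $g$ and $g'$ have the same image $w$ in $P$, one can write $g' = g\cdot(1,n)$ where $(1,n) = g^{-1}g'$, and a short computation in $\Ext(\nu_x)$ shows $n = (m'-m)^{w^{-1}}$, i.e.\ $m-m' = -n^{w}$. Thus $g$ and $g'$ induce the same morphism on $A = C_{L,x}(\gamma)\times O$ exactly when the inner element $(1,n)\in M_x$ acts trivially on $A$ by conjugation. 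Hence the lemma reduces to: \emph{$(1,n)$ centralises $A$ if and only if $n\in C_{M_x}(L)$}.

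Next I would unwind what it means for $(1,n)$ to centralise $A = C_{L,x}(\gamma)\times O$. Since $O\le M_x$ is abelian and $M_x$ is abelian, $(1,n)$ automatically centralises $O$. So $(1,n)$ centralises $A$ if and only if it centralises $C_{L,x}(\gamma)$. By Lemma \ref{zetax}(d), the element $(1,n)$ centralises $C_{L,x}(\gamma)$ if and only if its image $w'=1$ in $P$ centralises $L$ — which is automatic — and $\zeta_{L,L,(1,n),x} = \gamma^{(1,n)} - \gamma$. Since $(1,n)$ has trivial image in $P$, conjugation by $(1,n)$ acts trivially on $Z^1(L,M_x)$, so the right-hand side is $0$; and by Lemma \ref{zetax}(a) with $w=1$ we get $\zeta_{L,L,(1,n),x} = \Zeta_{L,L,1,x} - \lambda_n^{1} = -\lambda_n$, because $\Zeta_{L,L,1,x} = 0$ (the defining formula collapses when $w=1$, using that $c_{L,x}$ is a homomorphism). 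Hence the centralising condition becomes $\lambda_n = 0$, i.e.\ $[n,l]=1$ for all $l\in L$, which is precisely $n\in C_{M_x}(L)$.

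Finally I would translate back: $(1,n)$ centralises $A$ iff $n\in C_{M_x}(L)$, and since $m-m' = -n^{w}$ (equivalently $m-m'\in C_{M_x}(L)^{w}$ iff $n\in C_{M_x}(L)$, as $C_{M_x}(L)^w$ is by definition $\{[n,l]=1 \text{ for } l\in L\}^w$ and the map $n\mapsto -n^w$ is a bijection of $M_x$ preserving this subgroup up to sign, which is irrelevant for subgroup membership), this gives exactly the stated criterion. The only genuinely delicate point is the bookkeeping in identifying $g^{-1}g'$ with $(1,(m'-m)^{w^{-1}})$ inside $\Ext(\nu_x)$ and confirming that $\Zeta_{L,L,1,x}$ vanishes; everything else is a direct application of Lemma \ref{zetax}. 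I expect this normalisation computation — being careful with the twisted action and the $\rho_x+\eta_x$ correction terms — to be the main obstacle, though it is routine in the style of the proof of Lemma \ref{zeta}(a).
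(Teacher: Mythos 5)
Your proof is correct in substance, but it takes a genuinely different route from the paper. The paper stays inside the morphism criterion of Theorem \ref{morphx}: since $g$ and $g'$ act identically on $A$, they determine the same element $\delta$, and comparing the two identities $\zeta_{L,H,g,x}=\sigma^w_L-\gamma+\delta=\zeta_{L,H,g',x}$ together with Lemma \ref{zetax}(a) gives $\lambda^w_m=\lambda^w_{m'}$, hence $m-m'\in C_{M_x}(L)^w$. You instead factor out the difference of $g$ and $g'$ as an element of $M_x$ and reduce the lemma to the statement that this element centralises $A$, which you settle with Lemma \ref{zetax}(d) and the vanishing $\Zeta_{L,L,1,x}=0$ (for which normalisation of $\nu_x$ suffices; you do not need $c_{L,x}$ to be a homomorphism). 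Your reduction bypasses Theorem \ref{morphx} and the $\delta$-bookkeeping and is arguably more transparent; the paper's proof is a one-line cancellation once its machinery is in place.

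One bookkeeping point should be fixed. With the multiplication of $\Ext(\nu_x)$ one computes $g^{-1}g'=(1,\,m'-m)$, whereas $g'g^{-1}=(1,\,(m'-m)^{w^{-1}})$. So the element whose $M_x$-component is $(m'-m)^{w^{-1}}$ is $g'g^{-1}$, and it is this element (writing $g'=(g'g^{-1})\,g$) whose centralising of $A$ is equivalent to $\mu_g=\mu_{g'}$. With the decomposition you state, $g'=g\,(1,n)$ with $(1,n)=g^{-1}g'$, one has $n=m'-m$, and the correct condition is that $(1,n)$ centralises $A^g$ rather than $A$. Either variant yields the stated criterion, since $C_{M_x}(L^w)=C_{M_x}(L)^w$, so the slip is harmless, but as written your formula for $n$ and your centralising condition belong to different decompositions and should be made consistent.
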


\begin{proof}
If $g$ and $g'$ induce the same morphism, then $(c_{L,x}(l))^g =
(c_{L,x}(l))^{g'}$ for each $l \in L$. Thus $g$ and $g'$ induce
the same element $\delta$ in the notation of Theorem \ref{morphx}
and its proof. Additionally using Lemma \ref{zetax} this implies
that 
\begin{eqnarray*}
\lambda_m^w &=& \Zeta_{L,H,w,x} - \zeta_{L,H,g,x} \\
            &=& \Zeta_{L,H,w,x} - (\sigma_L^w - \gamma + \delta) \\
            &=& \Zeta_{L,H,w,x} - \zeta_{L,H,g',x} \\
            &=& \lambda_{m'}^w
\end{eqnarray*}
Hence $m \equiv m' \bmod C_{M_x}(L)^w$. The converse follows with a
similar computation.
\end{proof}

\section{Skeletons of Quillen categories}
\label{skels}

Recall that a skeleton of a category is the full subcategory obtained
by choosing one object from each isomorphism class of objects. A 
semi-skeleton of a category is the full subcategory obtained by choosing
at least one object from each isomorphism class of objects. It is
well-known in category theory that each category is equivalent to each
of its (semi-)skeletons.

\subsection{A semi-skeleton for $\A_p(S)$}

The objects of $\A_p(S)$ have the form $C_L(\gamma)$ for $L \in \L$
and $\gamma \in Z^1(L, T)$. Let $T^1(L, T)$ denote a transversal of 
$B^1(L, T)$ in $Z^1(L, T)$ and denote $\cT = \{ C_L(\gamma) \mid 
\gamma \in T^1(L,T) \}$. Then we define $\ol{\A}_p(S)$ as the full 
subcategory of $\A_p(S)$ on the objects in $\cT$. 

\begin{lemma}
$\ol{\A}_p(S)$ is a semi-skeleton of $S$. Its number of objects is
\[ \sum_{L \in \L} |H^1(L,T)|. \]
\end{lemma}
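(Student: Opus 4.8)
The plan is to verify the two assertions separately: first that $\ol{\A}_p(S)$ is a semi-skeleton, and then to count its objects. For the first part, I need to show that the full subcategory $\ol{\A}_p(S)$ on the objects in $\cT$ meets every isomorphism class of objects of $\A_p(S)$. Recall that the objects of $\A_p(S)$ are exactly the complements $C_L(\delta)$ for $L \in \L$ and $\delta \in Z^1(L,T)$. So I would fix an arbitrary object $C_L(\delta)$ and produce an object in $\cT$ isomorphic to it inside $\A_p(S)$. Since $T^1(L,T)$ is a transversal of $B^1(L,T)$ in $Z^1(L,T)$, there is $\gamma \in T^1(L,T)$ with $\delta - \gamma = \lambda \in B^1(L,T)$, and $\lambda$ is induced by some $t \in T$ via $\lambda(l) = t^l - t = -[t,l]$ in the notation of the paper. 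By the discussion in Section \ref{compl}, conjugation by $(1,t) \in S$ (equivalently by the element $t$ of $T$) carries $C_L(\delta)$ to $C_L(\gamma)$; more precisely one checks $(l, t_L(l) + \delta(l))^{(1,t)} = (l, t_L(l) + \gamma(l))$ using $\delta = \gamma + \lambda_t$. Hence $C_L(\delta)$ and $C_L(\gamma)$ are isomorphic in $\A_p(S)$, and $C_L(\gamma) \in \cT$. This shows $\cT$ is a semi-skeleton set of objects, so $\ol{\A}_p(S)$ is a semi-skeleton of $\A_p(S)$ (and the general category-theoretic fact quoted in Section \ref{skels} then gives the equivalence $\ol{\A}_p(S) \simeq \A_p(S)$, which is the point of the construction).

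For the count, the objects of $\ol{\A}_p(S)$ are, by construction, the disjoint union over $L \in \L$ of $\{C_L(\gamma) \mid \gamma \in T^1(L,T)\}$. Two distinct $L$ give disjoint sets of objects since $L$ is recovered from $C_L(\gamma)$ as its image $C_L(\gamma)T/T$ under $\epsilon$. So the total count is $\sum_{L \in \L} |\,\{C_L(\gamma) \mid \gamma \in T^1(L,T)\}\,|$. It remains to see that for fixed $L$ the map $T^1(L,T) \to \{\text{objects}\}$, $\gamma \mapsto C_L(\gamma)$, is a bijection onto its image of size $|T^1(L,T)| = |Z^1(L,T)/B^1(L,T)| = |H^1(L,T)|$. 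Injectivity: if $C_L(\gamma) = C_L(\gamma')$ as subgroups of $S$, then comparing the $T$-coordinates of the element lying over a given $l \in L$ forces $t_L(l) + \gamma(l) = t_L(l) + \gamma'(l)$, i.e.\ $\gamma = \gamma'$; since $T^1(L,T)$ has one representative per coset this already gives what we want. Summing gives the stated formula $\sum_{L \in \L} |H^1(L,T)|$.

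The only genuinely delicate point is the isomorphism step in the first part: one must be careful that the isomorphism exhibited is a morphism of $\A_p(S)$, i.e.\ induced by conjugation with an element of $S$, and not merely an abstract group isomorphism. This is handled by choosing the conjugating element to be $(1,t) \in S$ where $t \in T$ induces the coboundary $\delta - \gamma$, exactly as in the last displayed formula of Section \ref{compl}; the computation there shows conjugation by $(1,t)$ sends the complement $C_L(\delta)$ to $C_L(\gamma)$. Everything else is bookkeeping: disjointness of the pieces indexed by $L$ follows from applying $\epsilon$, and the per-$L$ bijectivity is immediate from the definition of a transversal together with the fact that $C_L(\gamma)$ determines $\gamma$.
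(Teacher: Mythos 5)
Your proposal is correct and follows essentially the same route as the paper: it uses the correspondence between coboundaries and conjugation by elements of $T$ from Section \ref{compl} to show every object $C_L(\delta)$ is conjugate in $S$ to some $C_L(\gamma)$ with $\gamma$ in the transversal $T^1(L,T)$, and then counts using $|T^1(L,T)| = |H^1(L,T)|$ together with the fact that distinct cocycles give distinct complements. (Only a harmless sign slip: additively $t^l - t = [t,l]$, not $-[t,l]$; this does not affect the argument.)
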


\begin{proof}
Two objects $C_L(\gamma)$ and $C_H(\sigma)$ are conjugate under the action 
of $T$ if and only if $L = H$ and $\gamma \equiv \sigma \bmod B^1(L, T)$. 
Thus $\ol{\A}_p(S)$ is a semi-skeleton of $\A_p(S)$. It now remains to note
that $|T^1(L,T)| = |H^1(L,T)|$.
\end{proof}

This has the following immediate consequence.

\begin{corollary}
Each skeleton of $\A_p(S)$ is finite.
\end{corollary}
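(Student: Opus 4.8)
The plan is to deduce this immediately from the preceding lemma, which exhibits the explicit semi-skeleton $\ol{\A}_p(S)$ and computes its number of objects as $\sum_{L \in \L} |H^1(L,T)|$. Since every category is equivalent to each of its (semi-)skeletons, and in particular any two (semi-)skeletons of $\A_p(S)$ are equivalent, it suffices to show that $\ol{\A}_p(S)$ is a finite category; that is, it has finitely many objects and finite morphism sets. Finiteness of the morphism sets is automatic: each $\mathrm{Hom}$-set in $\A_p(S)$ (and hence in the full subcategory $\ol{\A}_p(S)$) consists of group homomorphisms between two elementary abelian $p$-subgroups of $S$, and these are finite groups, so there are only finitely many such homomorphisms.

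The substance is thus to prove that $\sum_{L \in \L} |H^1(L,T)|$ is finite. First I would note that $\L$ is a finite set, since by definition its elements are (elementary abelian) subgroups of the finite $p$-group $P = S/T$, and a finite group has only finitely many subgroups. Second, for each fixed $L \in \L$ I would argue that $H^1(L,T)$ is finite. Here $L$ is a finite group acting on $T \cong \Z_p^d$, which is a Noetherian $\Z_p$-module; standard cohomology of finite groups gives that $H^n(L,T)$ is a finitely generated $\Z_p$-module for every $n$, and moreover $H^n(L,T)$ is annihilated by $|L|$ for $n \geq 1$ (a consequence of the transfer, cf. the vanishing of $inc_r$ invoked in the proof of Theorem \ref{thm18}). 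A finitely generated $\Z_p$-module annihilated by $|L|$ is finite, so $|H^1(L,T)| < \infty$.

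Combining the two points, $\ol{\A}_p(S)$ has finitely many objects and finite $\mathrm{Hom}$-sets, hence is a finite category. Any skeleton of $\A_p(S)$ is equivalent to $\ol{\A}_p(S)$; in particular its object set is in bijection with the set of isomorphism classes of objects of $\A_p(S)$, which has cardinality at most $|\mathrm{Ob}(\ol{\A}_p(S))|$ and is therefore finite, while again all its morphism sets are finite for the same reason as above. Thus each skeleton of $\A_p(S)$ is finite. The only nonroutine point is the finiteness of $H^1(L,T)$ for $T$ a $p$-adic module, and this is handled by the two standard facts just cited (finite generation over $\Z_p$ together with annihilation by $|L|$); everything else is bookkeeping about skeletons and finite groups.
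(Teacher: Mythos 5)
Your proof is correct and follows essentially the same route as the paper, which deduces the corollary immediately from the preceding lemma: the semi-skeleton $\ol{\A}_p(S)$ has $\sum_{L \in \L} |H^1(L,T)|$ objects, and this is finite since $\L$ consists of subgroups of the finite group $P$ and each $H^1(L,T)$ is finite. Your additional remarks on the finiteness of the $\mathrm{Hom}$-sets and on $H^1(L,T)$ being a finitely generated $\Z_p$-module killed by $|L|$ simply spell out what the paper leaves implicit.
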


\subsection{A semi-skeleton for $\A_p(G_x)$}

Let $x \in \N$.
The objects of $\A_p(G_x)$ all have the form $C_{L,x}(\gamma) \times O$
for $L \in \L_\eta$, $\gamma \in Z^1(L, M_x)$ and $O \in \O_x(L)$. Let
$W^1(L, M_x) = pro_x(T^1(L,T)) + K^1(L,T)$ and denote $\cT_x = \{
C_{L,x}(\gamma) \times O \mid \gamma \in W^1(L,M_x), O \in \O_x(L) \}$.
Then we define $\ol{\A}_p(G_x)$ as the full subcategory of $\A_p(G_x)$
on the objects in $\cT_x$.

\begin{lemma}
$\ol{\A}_p(G_x)$ is a semi-skeleton of $S$. Its number of objects is
\[ \sum_{L \in \L_\eta} |H^1(L,T)| |H^2(L, T)| |\O_x(L)|. \]
\end{lemma}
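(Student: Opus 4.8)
The plan is to mimic the proof of the previous lemma about $\ol{\A}_p(S)$, but now keeping track of the extra factor $O \in \O_x(L)$ and the larger coefficient group $W^1(L,M_x)$. First I would establish that $\ol{\A}_p(G_x)$ is a semi-skeleton: by the description of the objects of $\A_p(G_x)$ in Section~\ref{objectsx}, every object is of the form $C_{L,x}(\gamma)\times O$ with $L\in\L_\eta$, $\gamma\in Z^1(L,M_x)$, and $O\in\O_x(L)$, so it suffices to show that for each such object there is at least one isomorphic object in $\cT_x$. Two objects $C_{L,x}(\gamma)\times O$ and $C_{L',x}(\gamma')\times O'$ conjugate under $M_x$ exactly when $L=L'$ (both equal the image modulo $M_x$), $O=O'$ (both equal the intersection with $M_x$, which is fixed by an inner automorphism that fixes $M_x$ pointwise-ish — more precisely $O$ is $M_x$-invariant since $M_x$ is abelian), and $\gamma\equiv\gamma'\bmod B^1(L,M_x)$; this last point follows from Section~\ref{compl} applied inside $\ol{L}_x$, exactly as in the $S$-case. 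So I need $W^1(L,M_x)$ to be a transversal of $B^1(L,M_x)$ in $Z^1(L,M_x)$, i.e. to meet every coset exactly once.

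The main technical step is therefore the claim that $W^1(L,M_x) = pro_x(T^1(L,T)) + K^1(L,T/p^{x+e}T)$ (the paper writes $K^1(L,T)$, evidently shorthand for $K^1(L,M_x)$) is such a transversal. By Lemma~\ref{lem18}(a), $Z^1(L,M_x) = I^1(L,M_x)\oplus K^1(L,M_x)$, where $I^1(L,M_x)=pro_x(Z^1(L,T))$. Since $T^1(L,T)$ is a transversal of $B^1(L,T)$ in $Z^1(L,T)$ and $pro_x$ maps $Z^1(L,T)$ onto $I^1(L,M_x)$ with $pro_x(B^1(L,T)) = B^1(L,M_x)\cap I^1(L,M_x)$ — actually I need $pro_x$ restricted to $Z^1(L,T)$ to have the property that $pro_x(T^1(L,T))$ is a transversal of $B^1(L,M_x)$ in $Z^1(L,M_x)$ — I would argue: modulo $B^1(L,M_x)$, the image of $I^1(L,M_x)$ is $H^1(L,T)$ (the $H^1$-summand of the splitting in Theorem~\ref{thm18}) and the image of $K^1(L,M_x)$ is $H^2(L,T)$; since $pro_x(T^1(L,T))$ hits each coset of $B^1(L,T)+ (\ker pro_r)$... this needs care because $pro_x$ need not be injective on $Z^1(L,T)$. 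The clean way is: $pro_x$ induces a bijection $H^1(L,T)\to Image(pro_r)\subseteq H^1(L,M_x)$ (as $inc_r$ vanishes by transfer, cf. the proof of Theorem~\ref{thm18}), so $pro_x(T^1(L,T))$ is a set of coset representatives for $B^1(L,M_x)$ within $I^1(L,M_x)+B^1(L,M_x) = I^1(L,M_x)$ (since $B^1(L,M_x)\subseteq I^1(L,M_x)$ by Lemma~\ref{lem18}(b)); then adding the full group $K^1(L,M_x)$, whose image in $H^1(L,M_x)$ is a complement to $Image(pro_r)$ by Theorem~\ref{thm18} and Lemma~\ref{lem18}(a), yields exactly one representative per coset of $B^1(L,M_x)$ in all of $Z^1(L,M_x)$. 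Hence $|W^1(L,M_x)| = |H^1(L,M_x)| = |H^1(L,T)|\cdot|H^2(L,T)|$ by Theorem~\ref{thm18}.

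Finally, counting the objects of $\ol{\A}_p(G_x)$: they are parametrised by triples $(L,\gamma,O)$ with $L\in\L_\eta$, $\gamma\in W^1(L,M_x)$, $O\in\O_x(L)$, but I must check this parametrisation is irredundant, i.e. distinct triples give distinct objects. Distinct $L$ or distinct $O$ clearly give distinct objects (they are recovered as $E M_x/M_x$ and $E\cap M_x$). For fixed $L,O$, distinct $\gamma,\gamma'\in W^1(L,M_x)$ give $C_{L,x}(\gamma)\ne C_{L,x}(\gamma')$ precisely when $\gamma-\gamma'\notin B^1(L,M_x)$ — wait, here is the subtlety flagged in the remark after the objects theorem in Section~\ref{objectsx}: different $\gamma$ can give the same object $C_{L,x}(\gamma)\times O$ when they differ by a cocycle with values in $C_{M_x}(L)$ contributing to $O$. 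So the count is NOT simply $\sum |H^1||H^2||\O_x(L)|$ unless $\gamma\mapsto C_{L,x}(\gamma)\times O$ is injective on $W^1(L,M_x)$; I expect this to be the real obstacle and would resolve it by noting that since $O\le A_x = p^{x+e-1}M_x$ has exponent $p$ while $W^1(L,M_x)$ consists of cocycles whose ``$\gamma$-part'' is a genuine transversal, and $C_{L,x}(\gamma)\times O = C_{L,x}(\gamma')\times O$ forces $\gamma-\gamma'$ to take values in $O\le A_x$ \emph{and} to lie in $B^1(L,M_x)+(\text{values in }O)$ — one then checks that within a transversal this cannot happen nontrivially, or else one reinterprets the stated count as the definition being slightly redundant and the cited number as an upper bound that is in fact attained for the specific transversal $W^1(L,M_x)$ chosen. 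I would carry out the bookkeeping to confirm the product formula holds exactly as stated, treating the $C_{M_x}(L)^w$-ambiguity from Lemma~\ref{samemor} as affecting only morphisms, not the object set, once the transversal is fixed.
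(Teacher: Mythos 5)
Up to and including the transversal argument, your proposal is the paper's proof with the details written out: the paper likewise characterises $M_x$-conjugacy of $C_{L,x}(\gamma)\times O$ and $C_{H,x}(\sigma)\times U$ by $L=H$, $O=U$ and $\gamma\equiv\sigma\bmod B^1(L,M_x)$, quotes Lemma \ref{lem18} for the fact that $W^1(L,M_x)$ is a transversal of $B^1(L,M_x)$ in $Z^1(L,M_x)$, and concludes $|W^1(L,M_x)|=|T^1(L,T)|\cdot|K^1(L,M_x)|=|H^1(L,T)|\,|H^2(L,T)|$ for each $L\in\L_\eta$. Your expanded justification of the transversal property (injectivity of $pro_x$ on $H^1(L,T)$ via the vanishing of $inc_r$ as in Theorem \ref{thm18}, together with $Z^1=I^1\oplus K^1$ and $B^1\leq I^1$ from Lemma \ref{lem18}) is exactly what that citation is meant to carry, and your reading of $K^1(L,T)$ as $K^1(L,M_x)$ is correct.

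Your last paragraph, however, is where the write-up stops short of a proof: the stated number of objects requires that distinct pairs $(\gamma,O)\in W^1(L,M_x)\times\O_x(L)$ give distinct subgroups, and you leave this as bookkeeping you ``would carry out'', even floating the alternative that the formula is only an upper bound. (To be fair, the paper's proof is equally silent on this point: it multiplies $|W^1(L,M_x)|$ by $|\O_x(L)|$ without comment, despite the redundancy remark at the end of Section \ref{objectsx}.) Your reduction can be pushed to a precise residual condition. If $C_{L,x}(\gamma)\times O=C_{L,x}(\gamma')\times O$ with $\gamma,\gamma'\in W^1(L,M_x)$, then $\psi:=\gamma-\gamma'\in Hom(L,O)$, a cocycle with values in $O\leq A_x\leq p^{x+e-l}M_x$ (where $p^l=|L|$), hence $\psi\in J^1(L,M_x)$. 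Writing $\psi=pro_x(\tau-\tau')+(\kappa-\kappa')$ with $\tau,\tau'\in T^1(L,T)$ and $\kappa,\kappa'\in K^1(L,M_x)$, the $I^1$-component $pro_x(\tau-\tau')=\psi-(\kappa-\kappa')$ lies in $I^1(L,M_x)\cap J^1(L,M_x)\leq B^1(L,M_x)$ by Lemma \ref{lem18}(b), so $\tau=\tau'$ by the transversal property, and therefore $\psi\in K^1(L,M_x)\cap Hom(L,O)$. Thus the only possible redundancy comes from nonzero homomorphisms $L\ra C_{A_x}(L)$ lying in the chosen complement, and the count as stated needs $K^1(L,M_x)\cap Hom(L,C_{A_x}(L))=0$. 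This holds trivially whenever $H^2(L,T)=0$ (as in all examples of Section \ref{examp}), but in general it is a constraint on the choice of $K^1$ that neither you nor the paper verifies; your argument is not complete until you either prove this intersection trivial or arrange it in the choice of the complements $K^1$.
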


\begin{proof}
Two objects $C_{L,x}(\gamma) \times O$ and $C_{H,x}(\sigma) \times U$ are 
conjugate under the action of $M_x$ if and only if $L = H$ and $O = U$ and
$\gamma \equiv \sigma \bmod B^1(L, M_x)$. By Lemma \ref{lem18}, the set
$W^1(L, M_x)$ is a transversal of $B^1(L, M_x)$ in $Z^1(L, M_x)$. Thus 
$\ol{\A}_p(G_x)$ is a semi-skeleton of $\A_p(G_x)$. Further, $|W^1(L, M_x)| 
= |T^1(L, T)| \cdot |K^1(L, M_x)| = |H^1(L, T)| |H^2(L, T)|$. This yields
the desired result.
\end{proof}

This has the following immediate consequence.

\begin{corollary}
The number of objects in $\ol{\A}_p(G_x)$ is independent of $x$.
\end{corollary}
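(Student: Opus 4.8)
The plan is to read the claim straight off the preceding Lemma. By that Lemma the number of objects of $\ol{\A}_p(G_x)$ equals
\[ \sum_{L \in \L_\eta} |H^1(L,T)|\,|H^2(L,T)|\,|\O_x(L)|. \]
The index set $\L_\eta$ and the factors $|H^1(L,T)|$ and $|H^2(L,T)|$ do not involve $x$ at all, so the whole statement reduces to showing that $|\O_x(L)|$ is independent of $x$ for each fixed $L \in \L_\eta$.

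For this I would recall from Section \ref{objectsx} that $M_x = T/p^{x+e}T$ has a unique maximal elementary abelian subgroup $A_x = p^{x+e-1}M_x$, and that $\O_x(L)$ is precisely the set of all subgroups of the centraliser $C_{A_x}(L) = \{ a \in A_x \mid [a,l] = 1 \mbox{ for all } l \in L \}$. Hence $|\O_x(L)|$ is just the number of subgroups of the finite elementary abelian group $C_{A_x}(L)$, a number that depends only on the order (equivalently, the rank) of $C_{A_x}(L)$. So it suffices to show that $C_{A_x}(L)$, as a group with its $L$-action, is independent of $x$ up to isomorphism.

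Finally, division by $p^{x+e-1}$ yields a $P$-module isomorphism $div_{x+e-1} : A_x = p^{x+e-1}T/p^{x+e}T \ra T/pT$ by the construction recalled in Section \ref{cohext}. Restricting the $P$-actions to $L$ it is in particular an isomorphism of $L$-modules, and it carries $C_{A_x}(L)$ onto $C_{T/pT}(L)$. The latter is visibly independent of $x$. Therefore $|\O_x(L)|$ equals the number of subgroups of $C_{T/pT}(L)$ for every $x$, and the displayed sum does not depend on $x$, which proves the Corollary. The argument is entirely routine; the only point to observe is that the socle $A_x$ of $M_x$ carries, via $div$, the same $P$-module structure as $T/pT$, so there is no genuine obstacle here.
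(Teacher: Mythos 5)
Your proposal is correct and follows the same route as the paper, which simply reads the count off the preceding lemma and treats the corollary as immediate. You merely make explicit the one point the paper leaves tacit, namely that $|\O_x(L)|$ is constant in $x$ because $div_{x+e-1}$ identifies $A_x = p^{x+e-1}M_x$ with $T/pT$ as $L$-modules, which is a correct and welcome clarification.
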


\section{A functor from $\A_p(G_x)$ to $\A_p(S)$}

Let $x \geq x_0$ for $x_0$ as in Lemma \ref{prozeta}. Our aim in this
section is to define a functor from the category $\A_p(G_x)$ to the
category $\A_p(S)$. For an arbitrary category let $Hom(A,B)$ denote the 
set of all morphisms from an object $A$ to an object $B$ and let $\mu_g$ 
denote the morphism induced by conjugation with the element $g$. 

An object $A$ in $\A_p(G_x)$ has the form $A = C_{L,x}(\gamma) \times O$. 
We define $\ol{A} = C_L(\ol{\gamma})$ and thus obtain an object $\ol{A}$ 
in $\A_p(S)$. The following theorem provides the basis for the construction
of morphisms in $\A_p(S)$ from morphisms in $\A_p(G_x)$.

\begin{theorem} \label{shift}
Let $x \geq x_0$ and let $A = C_{L,x}(\gamma) \times O$ and 
$B = C_{H,x}(\sigma) \times U$ be two objects in $\A_p(G_x)$. 
If $g = (w,m) \in G_x$ induces a morphism in $Hom(A,B)$, then 
there exists $t \in T$ so that $h = (w, t) \in S$ induces a morphism 
in $Hom(\ol{A}, \ol{B})$.
\end{theorem}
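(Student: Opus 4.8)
The plan is to unwind the hypothesis using Theorem~\ref{morphx} and then transport it to $S$ via the comparison results of Lemma~\ref{zetax}(c) and Lemma~\ref{prozeta}. First I would apply Theorem~\ref{morphx} to the morphism induced by $g = (w,m)$: it tells us that $L^g \le H$, that $O^g \le U$, and that there is some $\delta \in Z^1(L,M_x)$ with image in $U^{g^{-1}}$ satisfying $\sigma_L^g - \gamma + \delta = \zeta_{L,H,g,x}$. In particular $\zeta_{L,H,g,x} \in Z^1(L,M_x)$. By Lemma~\ref{zetax}(a) this gives $\Zeta_{L,H,w,x} = \zeta_{L,H,g,x} + \lambda_m^w \in Z^1(L,M_x)$, since $\lambda_m^w$ is a coboundary and hence a cocycle.

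Next I would invoke Lemma~\ref{prozeta}: since $x \ge x_0$ and $\Zeta_{L,H,w,x} \in Z^1(L,M_x)$, it follows that $\Zeta_{L,H,w} \in Z^1(L,T)$. Now pick any $t \in T$ with $pro_x(t) = m$ and set $h = (w,t) \in S$. By Lemma~\ref{zeta}(a), $\zeta_{L,H,h} = \Zeta_{L,H,w} - \lambda_t^w$, and both summands lie in $Z^1(L,T)$ (the first by the previous step, the second because it is a coboundary), so $\zeta_{L,H,h} \in Z^1(L,T)$. By Lemma~\ref{zeta}(c) this is exactly the condition guaranteeing that there exist $\gamma', \sigma'$ with $C_L(\gamma')^h \le C_H(\sigma')$; taking $\gamma' = -\zeta_{L,H,h}$ and $\sigma' = 0$ as in the proof of that lemma, $h$ induces a morphism $\mu_h \in Hom(C_L(\gamma'), C_H(\sigma'))$ in $\A_p(S)$.

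The remaining issue is to match the source and target of this morphism to $\ol{A} = C_L(\ol{\gamma})$ and $\ol{B} = C_H(\ol{\sigma})$ rather than to the auxiliary complements $C_L(\gamma')$ and $C_H(\sigma')$. Here I would use that all complements to $T$ in $\ol{L}$ are $T$-conjugate to one another iff their defining cocycles differ by an element of $B^1(L,T)$ (Section~\ref{compl}); more relevantly, the key point is that $C_L(\ol\gamma)^h \le C_H(\ol\sigma)$ should follow from Theorem~\ref{morph} once we check $\sigma_L^h - \ol\gamma = \zeta_{L,H,h}$ (or, allowing for a $T$-adjustment exactly as the extra term $\delta$ was used in the finite case, that this holds modulo the appropriate subgroup). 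The cleanest route is: having $\zeta_{L,H,h} \in Z^1(L,T)$, and knowing from the $G_x$-side equation $\sigma_L^g - \gamma + \delta = \zeta_{L,H,g,x}$ together with Lemma~\ref{zetax}(c) that $pro_x(\Zeta_{L,H,w}) + \varphi_{L,H,w,x}$ reduces compatibly, one reads off that $\zeta_{L,H,h} = \Zeta_{L,H,w} - \lambda_t^w$ differs from $\ol\sigma_L^h - \ol\gamma$ only by the lift of $\delta$, which lies in the relevant centraliser; so adjusting $t$ within the coset $m + p^{x+e}T$ if necessary (permissible by Lemma~\ref{zeta}(b), which keeps $\zeta_{L,H,h}$ a cocycle) and conjugating by an element of $T$, we obtain $C_L(\ol\gamma)^h \le C_H(\ol\sigma)$, i.e.\ $\mu_h \in Hom(\ol A, \ol B)$.

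\textbf{Main obstacle.} I expect the genuinely delicate step to be the last one: the passage from "$h$ induces \emph{some} morphism in $\A_p(S)$" to "$h$ (after adjusting the $T$-component within the fibre $pro_x^{-1}(m)$) induces a morphism with the \emph{prescribed} source $\ol A$ and target $\ol B$". This requires carefully tracking the auxiliary $\delta$ from Theorem~\ref{morphx}, checking that its lift lands in $C_T(L)$ (equivalently, that the ambiguity is absorbed by $T$-conjugacy and by the choice of lift $t$), and using the compatibility $\zeta_{L,H,g,x} = pro_x(\zeta_{L,H,h}) + \varphi_{L,H,w,x}$ from Lemma~\ref{zetax}(c) with $\varphi_{L,H,w,x}$ having image in $p^{x+e-m}M_x$. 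The numerical bookkeeping ($e = 3m$, $r \ge 2m$, the $x \ge x_0$ from Lemma~\ref{prozeta}) is exactly what makes these adjustments possible, so the proof should go through, but writing the identification of source and target cleanly is where the real work lies.
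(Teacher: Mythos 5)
Your first half is fine and follows the paper's own preparation: Theorem~\ref{morphx} gives $\delta$ with $\sigma^g_L-\gamma+\delta=\zeta_{L,H,g,x}$, Lemma~\ref{zetax}(a) gives $\Zeta_{L,H,w,x}\in Z^1(L,M_x)$, and Lemma~\ref{prozeta} gives $\Zeta_{L,H,w}\in Z^1(L,T)$. But the step you defer to the end is not a technical afterthought; it is the actual content of the theorem, and your sketch of it does not work. Lemma~\ref{zeta}(c) only produces a morphism between \emph{some} complements $C_L(\gamma')$, $C_H(\sigma')$; what is required is $C_L(\ol{\gamma})^h\leq C_H(\ol{\sigma})$ for the \emph{prescribed} $\ol{\gamma},\ol{\sigma}$, and by Theorem~\ref{morph} this amounts to showing that $\Zeta_{L,H,w}-\ol{\sigma}^w_L+\ol{\gamma}$ equals $\lambda^w_{\ol{m}}$ for some $\ol{m}\in T$, i.e.\ is a coboundary in $B^1(L,T)$. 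Note that the source cannot be repaired afterwards by a $T$-conjugation (that would change the object $\ol{A}$), and one cannot simply pick an arbitrary source either, since a cocycle prescribed on $L^w$ need not extend to $H$. Your heuristic that the discrepancy is ``only the lift of $\delta$, which lies in the relevant centraliser'' is false as stated: projecting the identity $\sigma^g_L-\gamma+\delta=\zeta_{L,H,g,x}$ onto its $T$-part leaves, besides $\delta$, the terms $\varphi_{L,H,w,x}$, $\ul{\gamma}$ and $\ul{\sigma}^w_L$, which live in $M_x$ (not in $T$, and not in any centraliser) and have no canonical lifts.

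The paper's proof handles exactly this point with the splitting machinery of Section~\ref{split}: one writes
\[
pro_x\bigl(\Zeta_{L,H,w}-\lambda^w_s-\ol{\sigma}^w_L+\ol{\gamma}\bigr)
 = -\varphi_{L,H,w,x}+\ul{\sigma}^w_L-\ul{\gamma}+\delta ,
\]
observes that the left side lies in $I^1(L,M_x)$ while the right side lies in $J^{1,*}(L,M_x)\leq B^1(L,M_x)+K^1(L,M_x)$ (this is where the bound $e=3m$ enters, forcing the images of $\eta_x$, $\omega_{L,x}$, $\omega_{H,x}$, $\ul{\gamma}$, $\ul{\sigma}$, $\delta$ into $p^mM_x$ and invoking Lemma~\ref{lem18}(c)), and concludes that both sides lie in $B^1(L,M_x)=pro_x(B^1(L,T))$ because $K^1$ is a complement to $I^1$ and $B^1\leq I^1$. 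Lifting this coboundary and using that the kernel of $pro_x$ on cocycles is contained in $B^1(L,T)$ yields the explicit choice $t=s+(\ol{s}+u)^w$. None of this appears in your proposal, and your suggested repair --- adjusting $t$ within the coset $m+p^{x+e}T$ --- is too restrictive: the correct $t$ is in general not a lift of $m$ at all, since $\ol{s}$ and $u$ need not lie in $p^{x+e}T$. So the proposal has a genuine gap precisely at the step you yourself flag as delicate; the missing idea is the $I^1$ versus $B^1+K^1$ comparison from Lemma~\ref{lem18}.
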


\begin{proof}
By Theorem \ref{morphx}, the assumption $A^g \leq B$ is equivalent to
$L^w \leq H$, and $O^w \leq U$ and there exists $\delta \in Z^1(L, 
U^{w^{-1}})$ with $\zeta_{L,H,g,x} = \sigma_L^w - \gamma + \delta$.
First note that $\sigma = pro_x(\ol{\sigma}) + \ul{\sigma}$ implies 
that $\sigma^w_L = pro_x(\ol{\sigma}^w_L) + \ul{\sigma}_L^w$, as 
addition and $pro_x$ are compatible with restriction and conjugation.
Let $s \in T$ with $pro_x(s) = m$. Then 
\begin{eqnarray*}
0 
&=& \zeta_{L,H,g,x} - \sigma_L^w + \gamma - \delta \\
&=& \Zeta_{L,H,w,x} - \lambda_m^w - \sigma_L^w + \gamma - \delta \\
&=& pro_x(\Zeta_{L,H,w}) + \varphi_{L,H,w,x} - pro_x(\lambda_s^w) 
    - \sigma_L^w + \gamma - \delta \\
&=& pro_x( \Zeta_{L,H,w} - \lambda_s^w - \ol{\sigma}^w_L + \ol{\gamma})  
   + \varphi_{L,H,w,x} - \ul{\sigma}^w_L + \ul{\gamma} - \delta 
\end{eqnarray*}
and thus
\[ pro_x( \Zeta_{L,H,w} - \lambda_s^w - \ol{\sigma}^w_L + \ol{\gamma})  
  = - \varphi_{L,H,w,x} + \ul{\sigma}^w_L - \ul{\gamma} + \delta. 
  \;\;\;\;\;\;\;\; (*)\]

The map $\Zeta_{L,H,w}$ is an element of $Z^1(L, T)$ by Lemma \ref{prozeta}.
Hence the left hand side of equation (*) is an element of $I^1(L, M_x)$.

We investigate the right hand side of equation (*). First note that 
$\varphi_{L,H,w,x} \in Z^1(L, M_x)$ by Lemma \ref{zetax}(b) and thus the 
right hand side of (*) defines an element of $Z^1(L, M_x)$. By construction, 
the images of the maps $\ul{\gamma}$, $\ul{\sigma}^w_L$ and $\delta$ are 
contained in $p^{x+e-m} M_x$ for $m = \log_p(|P|)$. It remains to consider 
$\varphi_{L,H,w,x}$. The definition of $\varphi_{L,H,w,x}$ is based on 
$\eta_x$, $\omega_{L,x}$ and $\omega_{H,x}$. The map $\eta_x$ has its 
image in $p^{x+e-m} M_x$ by construction and the maps $\omega_{L,x}$ and
$\omega_{H,x}$ are defined such that their images are contained in
$p^{x+e-2m} M_x$. As $e = 3m$, it follows that $x+e-2m \geq m$ and hence 
the right hand side of (*) is contained in $J^{1,*}(L, M_x) \leq 
B^1(L, M_x) + K^1(L, M_x)$ by Lemma \ref{lem18} (c).  

Combining the results on the left and the right hand side of (*), we obtain
that both sides are contained in $B^1(L, M_x) = pro_x(B^1(L,T))$. Thus there 
exists $\ol{s} \in T$ so that 
$pro_x( \Zeta_{L,H,w} - \lambda_{s^{w^{-1}} + \ol{s}} - \ol{\sigma}^w_L 
+ \ol{\gamma}) = 0$. As $ker(pro_x) \leq B^1(L, T)$, there exists $u \in T$ 
with $\Zeta_{L,H,w} - \lambda_{s^{w^{-1}} + \ol{s}} - \ol{\sigma}^w_L + 
\ol{\gamma} = \lambda_u$. Choosing $t = s + (\ol{s} + u)^w$ now yields the 
desired result by Theorem \ref{morph} and Lemma \ref{zeta}. 
\end{proof}

We call an element $h$ as determined in Theorem \ref{shift} a {\em lifting}
of $g$ with respect to the objects $A$ and $B$ in $\A_p(G_x)$. The proof
of Theorem \ref{shift} also yields a way to identify liftings:
Given $g = (w, m)$ together with objects $A = C_{L,x}(\gamma) \times O$ and 
$B = C_{H,x}(\sigma) \times U$, the proof of Theorem~\ref{shift} constructs 
an element $\ol{m} = t \in T$ satisfying
\begin{eqnarray*}
(1) && \Zeta_{L,H,w} - \ol{\sigma}^w_L + \ol{\gamma}
        = \lambda_{\ol{m}}^w \, .
\end{eqnarray*}
Conversely let $\ol{m} \in T$ be any element satisfying (1)\@. Then with 
$\ul{m} := m - pro_x(\ol{m}) \in M_x$ and $\delta \in Z^1(L, M_x)$ as 
defined as in Theorem \ref{morphx}, the proof of Theorem~\ref{shift} shows 
that
\begin{eqnarray*}
(2) && \varphi_{L,H,w,x} - \ul{\sigma}^w_L + \ul{\gamma} - \delta 
        = \lambda_{\ul{m}}^w
\end{eqnarray*}
holds. So $m = pro_x(\ol{m}) + \ul{m}$, $g = (w, pro_x(\ol{m}) + \ul{m})$, 
and $h = (w, \ol{m})$ is a lifting of $g$. By Theorem \ref{morph} we observe 
that every $h = (w,t) \in S$ with $\ol{A}^h \leq \ol{B}$ satisfies equation 
(1). Hence equation (1) characterises the liftings of an element $g$ with 
respect to $A$ and $B$. The next lemmata investigate properties of liftings.

\begin{remark} \label{unique}
Let $x \geq x_0$ and suppose that $g = (w,m) \in G_x$ induces a morphism in 
$Hom(A,B)$. Let $h = (w, \ol{m})$ be a lifting of $g$ with respect to $A$ 
and $B$.
\begin{items}
\item[\rm (a)]
$\{ (w, t) \mid t \in \ol{m}+C_T(L)^w \}$ is the set of all liftings of $g$.
\item[\rm (b)]
All liftings of $g$ define the same morphism in $Hom(\ol{A}, \ol{B})$.
\end{items}
\end{remark}

\begin{proof}
This follows directly from equation (1).
\end{proof}

\begin{lemma} \label{functorial}
Let $x \geq x_0$. Let $g_1, g_2 \in G_x$ so that $g_1$ induces a morphism 
in $Hom(A,B)$ and $g_2$ induces a morphism in $Hom(B,C)$. Let $h_1$ be a 
lifting of $g_1$ with respect to $A$ and $B$ and let $h_2$ be a lifting of 
$g_2$ with respect to $B$ and $C$. Then $h_1 h_2$ is a lifting of $g_1 g_2$ 
with respect to $A$ and $C$.
\end{lemma}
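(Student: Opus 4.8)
The plan is to verify the defining equation~(1) for the product $h_1 h_2$, using the known liftings $h_1$ and $h_2$ for $g_1$ and $g_2$. Write $A = C_{L,x}(\gamma)\times O$, $B = C_{H,x}(\sigma)\times U$, $C = C_{K,x}(\tau)\times V$, and $g_1 = (w_1, m_1)$, $g_2 = (w_2, m_2)$, with liftings $h_1 = (w_1, \ol{m}_1)$ and $h_2 = (w_2, \ol{m}_2)$. By Remark~\ref{unique} and the discussion after Theorem~\ref{shift}, equation~(1) holds for the pair $(A,B)$ with $\ol{m}_1$ and for the pair $(B,C)$ with $\ol{m}_2$, namely
\[ \Zeta_{L,H,w_1} - \ol{\sigma}^{w_1}_L + \ol{\gamma} = \lambda_{\ol{m}_1}^{w_1}, \qquad \Zeta_{H,K,w_2} - \ol{\tau}^{w_2}_H + \ol{\sigma} = \lambda_{\ol{m}_2}^{w_2}. \]
Since $h_1 h_2 = (w_1 w_2, \ol{m}_1^{w_2} + \ol{m}_2 + \rho(w_1, w_2))$ in $S = \Ext(\rho)$, I must show that $(w_1 w_2, \ol{m}_1^{w_2} + \ol{m}_2 + \rho(w_1, w_2))$ satisfies equation~(1) with respect to the pair $(A, C)$, i.e.\ that
\[ \Zeta_{L,K,w_1 w_2} - \ol{\tau}^{w_1 w_2}_L + \ol{\gamma} = \lambda_{\ol{m}_1^{w_2} + \ol{m}_2 + \rho(w_1, w_2)}^{w_1 w_2}. \]

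First I would establish a cocycle-style composition identity for $\Zeta$: something of the shape
\[ \Zeta_{L,K,w_1 w_2} = \bigl(\Zeta_{L,H,w_1}\bigr)^{w_2} + \Zeta_{H,K,w_2}\big|_L + (\text{coboundary terms involving } \rho), \]
where the restriction is along $L^{w_1}\le H$. This is the analogue, at the level of the maps $\Zeta_{L,H,w}$, of the fact that $\zeta_{L,H,g}$ behaves multiplicatively under composition of the group elements $g$; it can be checked by expanding the definition $\Zeta_{L,H,w}(l) = (t_L(l)^w - t_H(l^w) + \rho(l,w) - \rho(w,l^w))^{w^{-1}}$ and telescoping, exactly as in the long computation in the proof of Lemma~\ref{zeta}(a) (indeed Lemma~\ref{zeta}(a) already packages $\zeta$ in terms of $\Zeta$ and $\lambda$, so one route is to run the composition argument at the level of $\zeta$ in $S$ and then convert back). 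The point is that $h_1 h_2$ certainly induces a morphism $\mu_{h_1 h_2} = \mu_{h_1}\mu_{h_2}$ in $\A_p(S)$ from $\ol{A}$ to $\ol{C}$, since $\ol{A}^{h_1}\le\ol{B}$ and $\ol{B}^{h_2}\le\ol{C}$; by the remark after Theorem~\ref{shift} (``every $h = (w,t)\in S$ with $\ol{A}^h\le\ol{B}$ satisfies equation~(1)''), this already forces $h_1 h_2$ to satisfy equation~(1) with respect to $(A,C)$ — which is precisely the defining condition for being a lifting of $g_1 g_2$.

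So the cleanest structure is: (i) observe $\ol{A}^{h_1}\le\ol{B}$ and $\ol{B}^{h_2}\le\ol{C}$ because $h_1, h_2$ are liftings, hence $\ol{A}^{h_1 h_2}\le\ol{C}$, so $h_1 h_2$ induces a morphism in $Hom(\ol A,\ol C)$ and therefore satisfies equation~(1); (ii) separately check that $g_1 g_2$ does induce a morphism in $Hom(A,C)$ — this follows because $A^{g_1}\le B$ and $B^{g_2}\le C$ give $A^{g_1 g_2}\le C$; (iii) conclude that $g_1 g_2 = (w_1 w_2, m')$ with $m' = m_1^{w_2} + m_2 + \nu_x(w_1, w_2)$, and that $pro_x(\ol m_1^{w_2} + \ol m_2 + \rho(w_1,w_2)) \equiv m' \bmod$ the relevant $C_{M_x}(L)^{w_1 w_2}$ correction, so that $(w_1 w_2,\ol m_1^{w_2}+\ol m_2+\rho(w_1,w_2))$ is indeed a lifting of $g_1 g_2$ and not merely of some other element with the same $P$-part. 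The main obstacle is bookkeeping in step~(iii): one must check that $pro_x$ of the product of the lifts agrees with the $M_x$-component of $g_1 g_2$ up to the freedom allowed in Remark~\ref{unique}(a) (i.e.\ modulo $C_T(L)^{w_1 w_2}$ after projection), which requires knowing that $\nu_x(w_1,w_2) = pro_x(\rho(w_1,w_2)) + \eta_x(w_1,w_2)$ and that the $\eta_x$-term is absorbed into the coboundary slack exactly as in the proof of Theorem~\ref{shift}; the compatibility of $\Zeta_{L,H,w,x}$ with $pro_x(\Zeta_{L,H,w})$ from Lemma~\ref{zetax}(c) is what makes this go through. Everything else is the telescoping identity for $\Zeta$ under composition, which is routine given Lemma~\ref{zeta}(a).
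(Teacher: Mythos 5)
Your steps (i) and (ii) are precisely the paper's proof: it notes that $\ol{A}^{h_1}\leq\ol{B}$ and $\ol{B}^{h_2}\leq\ol{C}$ give $\ol{A}^{h_1h_2}\leq\ol{C}$, that $A^{g_1}\leq B$ and $B^{g_2}\leq C$ give $A^{g_1g_2}\leq C$, and concludes immediately that $h_1h_2$ is a lifting of $g_1g_2$; your opening plan of a telescoping identity for the maps $\zeta$ is not needed once one invokes the characterisation of liftings by equation (1). Where your proposal goes astray is step (iii), which you single out as the main obstacle: no such compatibility between $pro_x$ of the lift and the $M_x$-component of $g_1g_2$ has to be checked. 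By the discussion after Theorem \ref{shift}, equation (1) characterises the liftings of an element $g=(w,m)$ with respect to $A$ and $B$, and (1) involves only the $P$-part $w$ together with the data of $A$ and $B$ — it imposes no relation between the $T$-part of the lifting and $m$ (Remark \ref{unique}(a) says the same: the set of liftings of $g$ is a coset of $C_T(L)^w$, independent of $m$). So once $g_1g_2$ induces a morphism in $Hom(A,C)$ (your step (ii)) and $h_1h_2$ has $P$-part $w_1w_2$ with $\ol{A}^{h_1h_2}\leq\ol{C}$, it is a lifting of $g_1g_2$ by definition; being "a lifting of some other element with the same $P$-part" is not a different condition. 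Moreover, the congruence you propose to verify — that $pro_x(\ol{m}_1^{w_2}+\ol{m}_2+\rho(w_1,w_2))$ agrees with the $M_x$-component of $g_1g_2$ modulo $C_{M_x}(L)^{w_1w_2}$ — is not something you should expect to hold: the discrepancy is $\ul{m}_1^{w_2}+\ul{m}_2+\eta_x(w_1,w_2)$, and these terms, though deep in $M_x$, have no reason to centralise $L$; fortunately the lemma does not require it. The bookkeeping you have in mind (splitting $\nu_x=\rho_x+\eta_x$ and tracking $\ul{m}=\ul{m}_1^{w_2}+\ul{m}_2+\eta_x(w_1,w_2)$) does occur in the paper, but only later, in the proof that $F:\ol{\A}_p(G_x)\ra\ol{\A}_p(G_{x+1})$ respects composition of pushouts — it is not part of Lemma \ref{functorial}.
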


\begin{proof}
Denote $g_i = (w_i, m_i)$ and $h_i = (w_i, t_i)$ for $i = 1,2$. Then 
$g_1g_2 = (w_1w_2,m)$ and $h_1h_2 = (w_1w_2,t)$ for some $m \in M_x$ 
and $t \in T$. Since $A^{g_1} \leq B$ and $h_1$ is a lifting of $g_1$, 
we have $\ol{A}^{h_1} \leq \ol{B}$. Since $B^{g_2} \leq C$ and $h_2$ 
is a lifting of $g_2$, we have $\ol{B}^{h_2} \leq \ol{C}$. So $A^{g_1g_2} 
\leq C$ and $\ol{A}^{h_1h_2} \leq \ol{C}$. Hence $h_1h_2$ is a lifting 
of $g_1g_2$.
\end{proof}

\begin{lemma} \label{identity}
Let $x \geq x_0$. Suppose that $g$ induces the identity morphism in $Hom(A,B)$
and let $h$ be a lifting of $g$ with respect to $A$ and $B$. Then $h$ induces
the identity morphism in $Hom(\ol{A}, \ol{B})$.
\end{lemma}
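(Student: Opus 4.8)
The plan is to read ``$g$ induces the identity morphism in $Hom(A,B)$'' in the only way compatible with the categorical notion of identity, namely that $B=A$ and conjugation by $g$ is the identity map on $A$, equivalently $g\in C_{G_x}(A)$. Writing $A=C_{L,x}(\gamma)\times O$ as usual, we then have $\ol B=\ol A=C_L(\ol\gamma)$, and it suffices to show that the lifting $h=(w,t)$ centralises $\ol A$: for then $\mu_h$ is the identity automorphism of $\ol A$, i.e.\ the identity in $Hom(\ol A,\ol B)$. I would take Lemma \ref{zeta}(d) as the target criterion: $h=(w,t)$ centralises $C_L(\ol\gamma)$ as soon as $w$ centralises $L$ and $\zeta_{L,L,h}=\ol\gamma^h-\ol\gamma$.

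So there are two things to check. First, that $w=\epsilon(g)$ centralises $L$: since the natural epimorphism $\epsilon:G_x\ra P$ intertwines conjugation by $g$ with conjugation by $w$ and satisfies $\epsilon(A)=L$, the hypothesis $a^g=a$ for all $a\in A$ forces $l^w=l$ for all $l\in L$ (alternatively, apply Lemma \ref{zetax}(d) to the direct factor $C_{L,x}(\gamma)\le A$, which $g$ also centralises). Second, that $\zeta_{L,L,h}=\ol\gamma^h-\ol\gamma$: here I would invoke the characterisation of liftings by equation~(1) from the paragraph after Theorem \ref{shift}. Since $B=A$, in the notation there $H=L$ and $\sigma=\gamma$, so equation~(1) specialises to $\Zeta_{L,L,w}-\ol\gamma^w_L+\ol\gamma=\lambda_t^w$. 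Combining this with Lemma \ref{zeta}(a), which gives $\zeta_{L,L,h}=\Zeta_{L,L,w}-\lambda_t^w$, yields $\zeta_{L,L,h}=\ol\gamma^w_L-\ol\gamma$. Because $w$ centralises $L$ we have $l^w=l$ for every $l\in L$, so the twisted restriction $\ol\gamma^w_L$ agrees with the action $\ol\gamma^w$, which is exactly the map written $\ol\gamma^h$ in Lemma \ref{zeta}(d) since $h$ has $P$-part $w$. Hence $\zeta_{L,L,h}=\ol\gamma^h-\ol\gamma$, and Lemma \ref{zeta}(d) gives that $h$ centralises $C_L(\ol\gamma)=\ol A$, as wanted. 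By Remark \ref{unique}(b) the choice of lifting is immaterial, consistent with this conclusion.

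The computation is short, so the only delicate point — and hence the main obstacle — is the bookkeeping in the second half of the argument: recognising that the twisted-restriction map $\ol\gamma^w_L$ occurring in the lifting equation is literally the action map $\ol\gamma^h$ appearing in Lemma \ref{zeta}(d). That identification is legitimate precisely because $w$ centralises $L$, which is why that fact must be secured first. A smaller point worth stating explicitly is that ``identity morphism'' forces domain and codomain to coincide and $g$ to act trivially on $A$, so the two-letter notation $Hom(A,B)$ introduces no genuinely separate case.
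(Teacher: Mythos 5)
Your proof is correct and essentially the paper's own argument: both first establish that $w$ centralises $L$ and then combine the lifting equation (1) with Lemma \ref{zeta}(a) and (d) to conclude that $h$ centralises $C_L(\ol{\gamma})$ (you apply (1) directly to the given lifting $h$, whereas the paper produces an auxiliary lifting $h'$ with the centralising property and transfers to $h$ via Remark \ref{unique}). The only caveat is scope: the paper reads the hypothesis as $g \in C_{G_x}(A)$ with $A \leq B$ allowed, concluding that $\mu_h$ is the inclusion morphism, while you insist on $B = A$; since every application in the paper (Lemma \ref{well-def} and the functoriality theorem) has $A = B$, this restriction is harmless.
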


\begin{proof}
The element $g$ induces the identity morphism in $Hom(A,B)$ if and only
if $g$ centralises $A$. Let $A = C_{L,x}(\gamma) \times O$ and $g = (w,m)$. 
If $g$ centralises $A$, then $g$ centralises $C_{L,x}(\gamma)$ and, by Lemma 
\ref{zetax}(d), it follows that $w$ centralises $L$ and $\zeta_{L,L,g,x} = 
\gamma^g - \gamma$. As in equation (1), it follows that $g$ has a lifting 
$h'$ such that $\zeta_{L,L,h'} = \ol{\gamma}^{h'} - \ol{\gamma}$. By Lemma 
\ref{zeta}(d) we obtain that $h'$ centralises $C_L(\ol{\gamma})$. So $h$ 
does too, by Remark~\ref{unique}(c).  Further $A \leq B$ implies that 
$\ol{A} \leq \ol{B}$ and thus we obtain that $h$ induces the trivial 
morphism in $Hom(\ol{A}, \ol{B})$.
\end{proof}

\begin{lemma} \label{well-def}
Let $x \geq x_0$. Suppose that the elements $g_1$ and $g_2$ of $G_x$ 
induce the same morphism in 
$Hom(A,B)$ and let $h_i$ be a lifting of $g_i$ with respect to $A$ and $B$
for $i = 1,2$. Then $h_1$ and $h_2$ induce the same morphism in $Hom(\ol{A}, 
\ol{B})$.
\end{lemma}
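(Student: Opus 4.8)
The plan is to reduce the statement to Remark~\ref{unique}(b) together with Lemma~\ref{samemor}. Suppose $g_1$ and $g_2$ induce the same morphism in $Hom(A,B)$, where $A = C_{L,x}(\gamma) \times O$ and $B = C_{H,x}(\sigma) \times U$. Write $g_i = (w, m_i)$; note first that since $g_1$ and $g_2$ induce the same morphism, they certainly have the same image $w$ in $P$ (the induced map on $P$ is conjugation by $w$, and this is recovered from the action of $g_i$ on $L = AM_x/M_x$). By Lemma~\ref{samemor}, inducing the same morphism means precisely $m_1 - m_2 \in C_{M_x}(L)^w$.

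The key observation is then the following. Given two elements $g_1=(w,m_1)$ and $g_2=(w,m_2)$ of $G_x$ that both induce a morphism in $Hom(A,B)$, and given a lifting $h_1 = (w,\ol{m}_1) \in S$ of $g_1$, I would show that $h_2 := (w,\ol{m}_1)$ (the \emph{same} element of $S$) is also a lifting of $g_2$ with respect to $A$ and $B$. This is immediate from the characterisation of liftings via equation~(1) in the discussion after Theorem~\ref{shift}: equation~(1) reads
\[ \Zeta_{L,H,w} - \ol{\sigma}^w_L + \ol{\gamma} = \lambda_{\ol{m}}^w, \]
and this condition depends only on $L,H,w$ and on the chosen skeleton-representatives $\ol{\gamma},\ol{\sigma}$ attached to the objects $A,B$ — it does \emph{not} involve $m$ at all. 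So the set of liftings of $g_1$ and the set of liftings of $g_2$ (with respect to the same pair $A,B$) coincide. Hence there is a common lifting $h := (w,\ol{m})$ of both $g_1$ and $g_2$.

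Now finish by a two-step comparison. By Remark~\ref{unique}(b), all liftings of $g_1$ induce the same morphism in $Hom(\ol{A},\ol{B})$, so $h_1$ and $h$ induce the same morphism; likewise $h_2$ and $h$ induce the same morphism. Therefore $h_1$ and $h_2$ induce the same morphism in $Hom(\ol{A},\ol{B})$, which is exactly the claim. The only point requiring a little care — and the main (very mild) obstacle — is verifying that equation~(1) genuinely characterises liftings independently of $m$; but this was already established in the paragraph preceding Remark~\ref{unique}, where it is shown that $h=(w,t)\in S$ satisfies $\ol{A}^h \leq \ol{B}$ if and only if it satisfies~(1), and that every $\ol{m}$ solving~(1) yields a lifting. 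So the argument is essentially a bookkeeping exercise combining Lemma~\ref{samemor}, the lifting criterion~(1), and Remark~\ref{unique}(b).
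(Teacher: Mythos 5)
Your reduction to the case $g_1=(w,m_1)$, $g_2=(w,m_2)$ is where the argument breaks down: it is not true that two elements of $G_x$ inducing the same morphism in $Hom(A,B)$ must have the same image in $P$. Inducing the same morphism means exactly that $c:=g_2g_1^{-1}$ lies in $C_{G_x}(A)$, and from the action on $A$ one only recovers the image of $g_i$ in $P$ modulo $C_P(L)$, not on the nose. Since $A$ is abelian we have $A\leq C_{G_x}(A)$, and $A$ maps onto $L$ in $P$; so whenever $L\neq 1$ one can take $g_2=g_1a$ with $a\in A$ having nontrivial image in $L$, obtaining two elements that induce the same morphism but have different $P$-components. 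Concretely, in the dihedral example of Section \ref{examp} both $1$ and $a$ induce the identity morphism on $\langle a\rangle$, yet their images in $P$ differ. Note also that Lemma \ref{samemor} only compares elements that already share the same $w$, so it cannot be used to justify the reduction.

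Your argument for the equal-$w$ case is fine (and there you do not even need Lemma \ref{samemor}: equation (1) does not involve $m$, so $g_1$ and $g_2$ have the same set of liftings, and Remark \ref{unique}(b) finishes it), but that is a proper special case. The general case is precisely where the real content lies, and it is what the paper's proof supplies: write $g_2=cg_1$ with $c\in C_{G_x}(A)$, lift $c$ to some $h\in S$ with respect to $A$ and $A$, observe by Lemma \ref{functorial} that $hh_1$ is a lifting of $g_2$, invoke Lemma \ref{identity} to conclude that $h$ centralises $\ol{A}$ (this is the step your proposal avoids, and it genuinely uses Lemma \ref{zetax}(d) and Lemma \ref{zeta}(d), since $c$ may have nontrivial image in $C_P(L)$), and then compare $h_2$ with $hh_1$ via Remark \ref{unique}. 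Without an argument that liftings of centralising elements with nontrivial $P$-part still centralise $\ol{A}$, the claim is not established.
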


\begin{proof}
By assumption $g_2 = c  g_1$, where $c := g_2 g_1^{-1}$ lies in $C_{G_x}(A)$. 
Let $h$ be lifting of $c$ with respect to $A$ and $A$. Then $hh_1$ is another 
lifting of $g_2$, by Lemma~\ref{functorial}. Hence by Remark~\ref{unique}(c), 
$h_2$ induces the same morphism as $hh_1$. But $h$ induces the identity 
morphism by Lemma~\ref{identity}.
\end{proof}

Based on the definitions of $\ol{A}$ for objects $A$ and liftings $h$ for
elements $g$ we define 
\[ F : \A_p(G_x) \ra \A_p(S)
     : \left \{ \begin{array}[h]{ll}
        A \ms \ol{A} & \mbox{ for objects } A  \mbox{ in } \A_p(G_x) \\
        \mu_g \ms \mu_h & \mbox{ for } g \in G_x \mbox{ with lifting } h \\
       \end{array} \right..\]

Lemma \ref{well-def} shows that this map is well-defined and Lemmas 
\ref{functorial} and \ref{identity} yield that it is a functor. Note 
that $F_S$ maps the semi-skeleton $\ol{A}_p(G_x)$ to the semi-skeleton 
$\ol{\A}_p(S)$.

\section{A functor from $\A_p(G_x)$ to $\A_p(G_{x+1})$}
\label{equiv}

Let $x \geq x_0$ for $x_0$ as in Lemma \ref{prozeta}. Our aim in this
section is to define a functor from the semi-skeleton $\ol{\A}_p(G_x)$
to the semi-skeleton $\ol{\A}_p(G_{x+1})$ and then to show that this 
functor induces an equivalence of categories.

As preliminary step recall that $mul : M_x \ra M_{x+1}: t+p^{x+e}T \ra 
pt + p^{x+e+1}T$ is induced by multiplication with $p$. This also
defines a map $mul$ on cohomology groups and this, in turn, yields 
an isomorphism $K^1(L, M_x) \ra K^1(L, M_{x+1})$.

Let $A = C_{L,x}(\gamma) \times O$ be an object in $\ol{\A}_p(G_x)$. Then 
$\gamma = pro_x(\ol{\gamma}) + \ul{\gamma}$ with $\ol{\gamma} \in T^1(L, T)$ 
and $\ul{\gamma} \in K^1(L, M_x)$. We define $iso : W^1(L, M_x) \ra 
W^1(L, M_{x+1}) : \gamma \ms pro_{x+1}(\ol{\gamma}) + mul(\ul{\gamma})$ and 
note that this is a bijection. This allows to define the following object
in $\ol{\A}_p(G_{x+1})$
\[ \hat{A} = C_{L,x+1}(iso(\gamma)) \times mul(O). \]

Suppose that $g \in G_x$ induces a morphism in $Hom(A,B)$. Write $g = 
(w, pro_x(\ol{m}) + \ul{m})$, where $h = (w, \ol{m})$ is a lifting of $g$. 
Then we define
\[ \hat{g} = (w, pro_{x+1}(\ol{m}) + mul(\ul{m})) .\]
We say that $\hat{g}$ is a {\em pushout} of $g$. Note that $\hat{g}$ depends 
on the choices made for a lifting $h$.

\begin{lemma}
\label{g-hat}
Let $g \in G_x$ and let $h \in S$ be a lifting of $g$ with respect to 
$A$ and $B$. 
\begin{items}
\item[\rm (a)] 
The map $\mu_{\hat{g}}$ is a morphism in $Hom(\hat{A}, \hat{B})$.
\item[\rm (b)]
The morphism $\mu_{\hat{g}}$ is independent of the choices made in the 
definition of $\hat{g}$.
\item[\rm (c)]
The element $h \in S$ is a lifting of $\hat{g}$ with respect to 
$\hat{A}$ and $\hat{B}$.
\end{items}
\end{lemma}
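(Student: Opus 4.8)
The plan is to reduce everything to the characterisation of morphisms in $\A_p(G_{x+1})$ given by Theorem \ref{morphx} together with the explicit description of the relationship between the level-$x$ and level-$(x+1)$ data supplied by the maps $mul$, $pro_x$ and $div$. For part (a) I would start from the hypothesis that $g = (w, pro_x(\ol m) + \ul m)$ induces a morphism in $Hom(A,B)$, so by Theorem \ref{morphx} we have $L^w \le H$, $O^w \le U$, and a $\delta \in Z^1(L, U^{w^{-1}})$ with $\sigma_L^w - \gamma + \delta = \zeta_{L,H,g,x}$. The equations $(1)$ and $(2)$ from the discussion after Theorem \ref{shift} split this into a ``torsion-free part'' satisfied by the lifting $h = (w,\ol m)$ and a ``small part'' $\varphi_{L,H,w,x} - \ul\sigma_L^w + \ul\gamma - \delta = \lambda_{\ul m}^w$. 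The key observation is that passing from level $x$ to level $x+1$ is, on each of the small quantities $\ul\gamma, \ul\sigma, \delta, \ul m$, exactly the map $mul$, while on the torsion-free representatives $\ol\gamma, \ol\sigma, \ol m$ it is simply re-reduction via $pro_{x+1}$; and the maps $\eta_x, \omega_{L,x}, \omega_{H,x}$ defining $\varphi_{L,H,w,x}$ were chosen compatibly (we have $\eta_x = div(\eta_{x+1})$, $\omega_{L,x} = div(\omega_{L,x+1})$), so that $mul(\varphi_{L,H,w,x}) = \varphi_{L,H,w,x+1}$. Applying $mul$ to equation $(2)$ and keeping equation $(1)$ for $h$ unchanged, I would reassemble, via Lemma \ref{zetax}(a),(c), the identity $\sigma'^w_L - iso(\gamma) + mul(\delta) = \zeta_{L,H,\hat g, x+1}$, where $\sigma' = iso(\sigma)$; since $mul(\delta) \in Z^1(L, mul(U)^{w^{-1}})$ and $O^w \le U$ forces $mul(O)^w \le mul(U)$, Theorem \ref{morphx} then yields $\hat A^{\hat g} \le \hat B$.

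For part (c) I would argue that $h = (w,\ol m)$ satisfies equation $(1)$ with respect to $\hat A$ and $\hat B$ — but this is the \emph{same} equation $(1)$: it only involves the torsion-free data $\Zeta_{L,H,w}, \ol\sigma, \ol\gamma$ and $\ol m$, none of which change when we replace $A,B$ by $\hat A,\hat B$ (the bar of $iso(\gamma)$ is again $\ol\gamma$ by construction). Since equation $(1)$ characterises liftings by the remark following Theorem \ref{shift}, $h$ is a lifting of $\hat g$ with respect to $\hat A, \hat B$, which is (c).

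Part (b) is where the real work sits, and I expect it to be the main obstacle. Two sources of ambiguity enter $\hat g$: the choice of lifting $h$ of $g$, and (implicitly, through the objects) the choice of $\delta$ in Theorem \ref{morphx}. For the first, Remark \ref{unique}(a) says any two liftings differ by an element of $C_T(L)^w$; applying $mul$ and $pro_{x+1}$, the two resulting pushouts $\hat g, \hat g'$ differ by an element of $\{(1, n) : n \in C_{M_{x+1}}(L)^w\}$, so by Lemma \ref{samemor} they induce the same morphism in $Hom(\hat A, \hat B)$ — provided $\hat A, \hat B$ really are objects associated to $L$ and $H$ as in that lemma's hypotheses, which (a) has just established. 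For the $\delta$-ambiguity one checks that changing $\delta$ changes $\ul m$ by a corresponding element lying in $mul$ of something in $C_{M_x}(L)^w$-coset direction, again landing in the kernel of Lemma \ref{samemor}; here I would have to be careful that the decomposition $m = pro_x(\ol m) + \ul m$ is the one forced by a \emph{fixed} choice of $\ol m \in T$, so the only freedom is the $C_T(L)^w$ freedom already handled. The cleanest route is probably: fix $h$, show $\hat g$ is then completely determined, and observe that Remark \ref{unique}(a) together with Lemma \ref{samemor} absorbs all remaining choices into a single morphism. The delicate point throughout is bookkeeping of which summand ($pro$ versus $K$) each small term lives in, and confirming the image conditions (everything relevant in $p^{x+e-2m}M_x$ with $x+e-2m \ge m$, since $e = 3m$) are preserved under $mul$ so that Lemma \ref{lem18}(c) keeps applying at level $x+1$.
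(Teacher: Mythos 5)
Your proposal is correct and takes essentially the same route as the paper: parts (a) and (c) are exactly the paper's ``follow readily from Theorem \ref{morphx} and equations (1) and (2)'' argument (which you spell out in more detail, using $mul(\varphi_{L,H,w,x})=\varphi_{L,H,w,x+1}$ and the fact that equation (1) involves only the torsion-free data), and your part (b) — two liftings differ by $c \in C_T(L)^w$ by Remark \ref{unique}, hence the pushouts differ by an element of $C_{M_{x+1}}(L)^w$ and Lemma \ref{samemor} applies — is precisely the paper's proof. Your extra concern about a $\delta$-ambiguity is moot, as you yourself conclude: $\delta$ is determined by $g$ and the objects, so the only freedom is the choice of $\ol{m}$, which is absorbed as above.
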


\begin{proof}
(a) and (c) follow readily from Theorem \ref{morphx} and equations (1) and
(2) which characterise the lifting $h$. It remains to consider (b).
Let $h = (w,\ol{m})$ and $h' = (w,\ol{m}')$ be two liftings of $g = (w,m)$, 
corresponding to the decompositions $g = pro_x(\ol{m}) + \ul{m}$ and $g 
= pro_x(\ol{m}') + \ul{m}'$. By Remark \ref{unique}, the element 
$c := \ol{m}'-\ol{m}$ lies in $C_T(L)^w$. Then $\ol{m'} = \ol{m} + c$ and 
$\ul{m'} = \ul{m'} - pro_x(c)$, and so since $mul(pro_x(c)) = pro_{x+1}(pc)$, 
it follows that the pushout $\hat{g}'$ obtained using $h'$ is
\[
\hat{g}' = (w,pro_{x+1} (\ol{m}') + mul(\ul{m}')) 
         = \hat{g}(1,pro_{x+1}((1-p)c)) \, .
\]
Since $c \in C_T(L)^w$ we have $pro_{x+1}((1-p)c) \in C_{M_{x+1}}(L)^w$. 
By Lemma \ref{samemor}, it follows that $\hat{g}'$ and $\hat{g}$ induce 
the same morphism in $Hom(\hat{A}, \hat{B})$.
\end{proof}

Based on the definitions of $\hat{A}$ and $\hat{g}$ we define a map 
\[ F : \ol{\A}_p(G_x) \ra \ol{\A}_p(G_{x+1})
     : \left \{ \begin{array}[h]{ll} 
        A \ms \hat{A} & \mbox{ for } A \in \cT_x \\
        \mu_g \ms \mu_{\hat{g}} & \mbox{ for } g \in G_x \\
       \end{array} \right..\]

\begin{theorem}
Let $x \geq x_0$. Then $F$ is a functor from $\ol{\A}_p(G_x)$ to 
$\ol{\A}_p(G_{x+1})$.
\end{theorem}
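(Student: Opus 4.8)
The plan is to verify the two defining properties of a functor: that $F$ respects identities and that it respects composition. The heavy lifting has already been done in the previous section, where we established that $F : \A_p(G_x) \ra \A_p(S)$ is a functor via the lifting construction; here we only need to transport those facts through the pushout operation, which replaces a lifting $h = (w,\ol{m})$ by $\hat{g} = (w, pro_{x+1}(\ol{m}) + mul(\ul{m}))$.

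First I would check that $F$ is well-defined on objects: given $A = C_{L,x}(\gamma) \times O \in \cT_x$, the element $iso(\gamma)$ lies in $W^1(L,M_{x+1})$ and $mul(O) \in \O_{x+1}(L)$, so $\hat A = C_{L,x+1}(iso(\gamma)) \times mul(O)$ is genuinely an object of $\cT_x$ at level $x+1$; this is immediate from the remarks preceding the statement. Next, $F$ is well-defined on morphisms: if $g_1, g_2 \in G_x$ induce the same morphism in $Hom(A,B)$, I must show $\hat{g}_1$ and $\hat{g}_2$ induce the same morphism in $Hom(\hat A, \hat B)$. By Lemma \ref{samemor}, $g_1$ and $g_2$ agree modulo $C_{M_x}(L)^w$; I would pick liftings $h_1, h_2 \in S$ of $g_1, g_2$ and use Lemma \ref{well-def} to see that $h_1$ and $h_2$ induce the same morphism in $Hom(\ol A, \ol B)$, hence (by Remark \ref{unique}) differ by an element of $C_T(L)^w$; pushing this difference forward via $pro_{x+1}$ and $mul$ lands in $C_{M_{x+1}}(L)^w$, and another application of Lemma \ref{samemor} at level $x+1$ finishes it. This also subsumes the content of Lemma \ref{g-hat}(b).

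For functoriality proper: if $g_1$ induces a morphism in $Hom(A,B)$ and $g_2$ one in $Hom(B,C)$, choose liftings $h_1, h_2$ of $g_1, g_2$. By Lemma \ref{functorial}, $h_1 h_2$ is a lifting of $g_1 g_2$ with respect to $A$ and $C$. Writing out $\hat{g}_1 \hat{g}_2$ and the pushout $\widehat{g_1 g_2}$ computed using the lifting $h_1 h_2$, I would check by a direct multiplication in $G_{x+1}$ — using that $pro_{x+1}$ and $mul$ are additive and that the cocycle $\nu_{x+1}$ on the relevant elementary abelian subgroups is controlled by equations (1) and (2) of Section~8 — that the two agree up to an element of $C_{M_{x+1}}(L)^w$, so that $\mu_{\widehat{g_1 g_2}} = \mu_{\hat{g}_1} \mu_{\hat{g}_2}$ by Lemma \ref{samemor}; well-definedness (just proved) guarantees the answer does not depend on which liftings were used. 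For identities, if $g$ centralises $A$ then by Lemma \ref{zetax}(d) $w$ centralises $L$ and $\zeta_{L,L,g,x} = \gamma^g - \gamma$; the lifting $h$ then centralises $C_L(\ol\gamma)$ by the argument in Lemma \ref{identity}, and feeding this through equations (1) and (2) at level $x+1$ shows $\hat{g}$ centralises $\hat A$, i.e. $\mu_{\hat g}$ is the identity of $Hom(\hat A, \hat A)$.

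The main obstacle I anticipate is the composition step: one must be careful that the decomposition $g = (w, pro_x(\ol{m}) + \ul{m})$ used to define the pushout is compatible with multiplication, since the split $m = pro_x(\ol m) + \ul m$ is not canonical and the "bar" and "underline" parts of a product are not simply the products of the parts — the cocycle terms $\nu_x(w_1, w_2)$ intervene. The resolution is that $h_1 h_2$ is itself a bona fide lifting of $g_1 g_2$ (Lemma \ref{functorial}), so we are free to compute the pushout of $g_1 g_2$ using precisely that lifting; then the discrepancy between $\hat{g}_1 \hat{g}_2$ and $\widehat{g_1 g_2}$ is exactly the difference between applying $mul \circ pro_x$ and $pro_{x+1}$ to the $T$-valued cocycle correction, which lies in the centraliser subgroup that Lemma \ref{samemor} declares invisible. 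Once this bookkeeping is organised, the rest is routine.
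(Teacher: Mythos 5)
Your overall architecture matches the paper's: push out via a lifting, check independence of the lifting, verify composition by computing $\widehat{g_1g_2}$ with the lifting $h_1h_2$ from Lemma \ref{functorial}, and show that centralising elements push out to centralising elements. However, your well-definedness step contains a genuine gap. You claim that if $g_1,g_2\in G_x$ induce the same morphism in $Hom(A,B)$ then by Lemma \ref{samemor} they agree modulo $C_{M_x}(L)^w$. Lemma \ref{samemor} only applies to two elements with the \emph{same} first component $w$, and elements inducing the same morphism need not share it: they differ by an arbitrary element $c\in C_{G_x}(A)$, whose image in $P$ lies in $C_P(L)$ and is typically nontrivial (since $A$ is abelian, $A\leq C_{G_x}(A)$, so already $1$ and any element of $C_{L,x}(\gamma)$ induce the same identity morphism while having different images in $P$). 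What your argument actually establishes is independence of the choice of lifting for a fixed $g$ (i.e.\ Lemma \ref{g-hat}(b)) together with the equal-$w$ case; the genuinely new case is not covered by that reduction, and your composition paragraph then leans on ``well-definedness (just proved)''. A side remark: the citation of Remark \ref{unique} for ``$h_1,h_2$ differ by an element of $C_T(L)^w$'' is also off, since that remark concerns liftings of a single $g$; the fact you want follows directly from equation (1), which depends only on $L,H,w,\gamma,\sigma$.

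The repair is available from ingredients you already list, and it is exactly the paper's route: write $g'=cg$ with $c\in C_{G_x}(A)$, push out $g'$ using the lifting $hh_1$ (Lemma \ref{functorial}), invoke lifting-independence for the fixed element $g'$, and then show $\hat{c}\in C_{G_{x+1}}(\hat{A})$ --- which is your ``identities'' paragraph. Be aware that this last step carries real content and is where the paper's main computation sits: one must verify $\zeta_{L,L,\hat{c},x+1}=iso(\gamma)^w-iso(\gamma)$, using equation (1), equation (2) with $\delta=0$, Lemma \ref{zetax}(d), and the compatibilities $\varphi_{L,L,w,x+1}=mul(\varphi_{L,L,w,x})$ and $\eta_{x+1}=mul(\eta_x)$ coming from the choices $\eta_x=div(\eta_{x+1})$ and $\omega_{L,x}=div(\omega_{L,x+1})$; your proposal only gestures at this. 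Similarly, in the composition step the relevant input is not that ``$\nu_{x+1}$ is controlled by equations (1) and (2)'' but the identities $\nu_{x+1}=\rho_{x+1}+\eta_{x+1}$ and $mul(\eta_x)=\eta_{x+1}$, which give the exact equality $\widehat{g_1g_2}=\hat{g}_1\hat{g}_2$ for the lifting $h_1h_2$ (your weaker ``equal up to $C_{M_{x+1}}(L)^w$'' would suffice, but the direct computation yields equality on the nose).
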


\begin{proof}
$F$ maps the objects of $\ol{\A}_p(G_x)$ one-to-one to objects in the 
Quillen category $\ol{\A}_p(G_{x+1})$ by construction. It remains to 
consider the morphisms.  

The map $g \mapsto \mu_{\hat{g}}$ is well-defined by Lemma~\ref{g-hat}(b). 
We now show that $\mu_{\hat{g_1}} \mu_{\hat{g_2}} = \mu_{\widehat{g_1g_2}}$. 
Let $g_i = (w_i,m_i)$ with lifting $h_i = (w_i,\ol{m}_i)$ ($i=1,2$). So 
$g_1g_2 = (w_1w_2,m)$ for
\[
m = m_1^{w_2} + m_2 + \nu_x(w_1,w_2) \, .
\]
By Lemma \ref{functorial}, $h_1 h_2$ is a lifting of $g_1g_2$. Choosing 
this lifting gives us
\[
\ol{m} = \ol{m}_1^{w_2} + \ol{m}_2 + \rho(w_1, w_2) 
   \quad \mbox{and hence} \quad
\ul{m} = \ul{m}_1^{w_2} + \ul{m}_2 + \eta_x(w_1,w_2) \, .
\]
So $mul(\ul{m}) = mul(\ul{m}_1)^{w_2} + mul(\ul{m}_2) + \eta_{x+1}(w_1,w_2)$, 
and therefore $\widehat{g_1g_2} = \widehat{g_1}\widehat{g_2}$ for this 
choice of liftings.

Now suppose that $\mu_g = \mu_{g'}$ in $Hom(A,B)$. Then $g' = cg$ for 
$c = g'g^{-1}$ in $C_{G_x}(A)$. We want $\mu_{\hat{g}} = \mu_{\hat{g}'}$, 
for which it suffices to show that $\hat{c} \in C_{G_{x+1}}(\hat{A})$. 
Let $A = C_{L,x}(\gamma) \times O$ and $c = (w,m)$. Let $h = (w,\ol{m})$ 
be a lift of $c$, then $w \in C_P(L)$; $O \leq C_{G_x}(w)$; and $h \in 
C_S(\ol{A})$ by Lemma~\ref{identity}\@. So $\zeta_{L,L,hg,x} = \gamma^w 
- \gamma$ by Lemma~\ref{zetax}(d), whence $\delta = 0$ in Theorem~\ref{morphx};
and $\zeta_{L,L,h} = \ol{\gamma}^w - \ol{\gamma}$ by Lemma~\ref{zeta}(d)\@.
So since
\begin{eqnarray*}
\zeta_{L,L,\hat{c},x+1} 
  & = & \hat{\zeta}_{L,L,w,x+1} 
                    - \lambda^w_{pro_{x+1}(\ol{m})+mul(\ul{m})}\\ 
  & = & pro_{x+1}(\hat{\zeta}_{L,L,w} 
                    - \lambda^w_{\ol{m}}) 
                    +\phi_{L,L,w,x+1}-\lambda^w_{mul(\ul{m})} \\ 
  & = & pro_{x+1}(\ol{\gamma}^w - \ol{\gamma}) + mul(\phi_{L,L,w,x}
                    -\lambda^w_{\ul{m}}) \quad \mbox{by (1) and 
                    construction of $\phi$} \\ 
  & = & pro_{x+1}(\ol{\gamma}^w - \ol{\gamma}) + mul(\ul{\gamma}^w
                    - \ul{\gamma}) \quad \mbox{by (2), since $\delta = 0$} \\ 
  & = & iso(\gamma)^w - iso(\gamma) \, ,
\end{eqnarray*}
$\hat{c}$ does indeed centralise. Finally, $1 \in S$ is a lifting of 
$1 \in G_x$ and thus $1 \in G_{x+1}$ is a pushout of $1 \in G_x$; that is, 
we can choose $\hat{1} = 1$.

Hence $F$ is well-defined on morphisms and satisfies $F(\mu_{g_1} \mu_{g_2})
= F(\mu_{g_1}) F(\mu_{g_2})$ as well as $F(id_A) = id_{F(A)}$. Thus, in 
summary, $F$ is a well-defined functor. 
\end{proof}

The following theorem also yields a proof for the main theorem of this
paper.

\begin{theorem}
Let $x \geq x_0$. Then $F$ is essentially surjective (or dense), full 
and faithful.
\end{theorem}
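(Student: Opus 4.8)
The plan is to establish the three properties of $F$ in turn, relying on the explicit bookkeeping already set up.

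\textbf{Essential surjectivity.} By construction $iso : W^1(L,M_x) \ra W^1(L,M_{x+1})$ is a bijection and $mul : \O_x(L) \ra \O_{x+1}(L)$ is a bijection (since $mul : M_x \ra M_{x+1}$ restricts to an isomorphism $A_x \ra A_{x+1}$ of the maximal elementary abelian subgroups and preserves the property of being centralised by $L$). Since $\cT_x$ consists exactly of the objects $C_{L,x}(\gamma) \times O$ with $L \in \L_\eta$, $\gamma \in W^1(L,M_x)$, $O \in \O_x(L)$, the assignment $A \ms \hat A$ is a bijection $\cT_x \ra \cT_{x+1}$. Hence $F$ is in fact bijective on objects, which is much stronger than essential surjectivity.

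\textbf{Fullness.} Fix objects $A = C_{L,x}(\gamma)\times O$ and $B = C_{H,x}(\sigma)\times U$ and a morphism $\psi \in Hom(\hat A, \hat B)$; I must exhibit $g \in G_x$ with $\mu_{\hat g} = \psi$. Write $\psi = \mu_{g'}$ for some $g' = (w, m') \in G_{x+1}$. By Theorem~\ref{morphx} applied to $\hat A$, $\hat B$ we get $L^w \leq H$, $O^w \leq U$, and a $\delta' \in Z^1(L, M_{x+1})$ with image in $mul(U)^{w^{-1}}$ satisfying $\sigma_L^w - iso(\gamma) + \delta' = \zeta_{L,H,g',x+1}$, where $\sigma$ here means $iso(\sigma)$. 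I would run the lifting machinery of Theorem~\ref{shift} at level $x+1$: it produces $\ol m \in T$ with $\Zeta_{L,H,w} - \ol\sigma_L^w + \ol\gamma = \lambda_{\ol m}^w$ (equation (1), which is independent of $x$) and then a decomposition $m' = pro_{x+1}(\ol m) + \ul{m}'$ with $\ul{m}' \in p^{(x+1)+e-m}M_{x+1}$ satisfying equation (2) at level $x+1$. The key point: the image of $\ul{m}'$ together with $\delta'$, $\ul\gamma$, $\ul\sigma$ all lie in $p^{\,\cdot}M_{x+1}$ with exponent $\geq x+e$, so $\ul{m}'$ actually lies in the image of $mul$, say $\ul{m}' = mul(\ul m)$ with $\ul m \in M_x$; likewise $\delta' = mul(\delta)$. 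Then set $g = (w, pro_x(\ol m) + \ul m) \in G_x$. Using equation (2) at level $x$ (which is $mul$-equivariant by the construction of $\omega_{L,x}$, $\eta_x$, $\varphi_{L,H,w,x}$ and the fact that $\ul\gamma, \ul\sigma$ are chosen $mul$-compatibly) one checks $g$ induces a morphism in $Hom(A,B)$ with lifting $h = (w,\ol m)$, and then by definition $\hat g = g'$ up to the ambiguity already controlled by Lemma~\ref{g-hat}(b). Hence $\mu_{\hat g} = \psi$.

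\textbf{Faithfulness.} Suppose $g_1, g_2 \in G_x$ induce morphisms in $Hom(A,B)$ with $\mu_{\hat{g_1}} = \mu_{\hat{g_2}}$ in $Hom(\hat A, \hat B)$; I must show $\mu_{g_1} = \mu_{g_2}$. Write $g_i = (w_i, m_i)$. Applying $\epsilon$ (the projection to $P$) to the equality of induced maps on $\hat A$ forces $w_1 = w_2 =: w$, since the action on $L$ is determined by $w$ and $C_{L,x+1}(iso(\gamma))$ maps isomorphically to $L$. So $m_1, m_2$ both give the same action, and by Lemma~\ref{samemor} at level $x+1$ we get $\hat m_1 - \hat m_2 \in C_{M_{x+1}}(L)^w$, where $\hat m_i = pro_{x+1}(\ol m_i) + mul(\ul m_i)$. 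I would then trace this back: $C_{M_{x+1}}(L)^w = mul(C_{M_x}(L)^w) + (\text{the } pro_{x+1}\text{-part})$, and combining with equation (1) being common to $h_1, h_2$ when these are chosen as liftings at level $x$, the difference $m_1 - m_2$ must lie in $C_{M_x}(L)^w$. By Lemma~\ref{samemor} at level $x$ this gives $\mu_{g_1} = \mu_{g_2}$ in $Hom(A,B)$.

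The main obstacle is the fullness argument: specifically, verifying that a general morphism of $\hat A \ra \hat B$, presented via an arbitrary $g' = (w,m') \in G_{x+1}$, has its ``$\ul{}$-part'' $\ul{m}'$ genuinely divisible by $p$ in $M_{x+1}$, so that it is a pushout of something at level $x$. This rests on the exponent estimates $p^{x+e-2m}$ for the $\omega$-maps and $p^{x+e-m}$ for $\eta_x$, $\ul\gamma$, $\ul\sigma$, $\delta$, together with $e = 3m$; one must make sure that after choosing the lifting at level $x+1$ these same bounds (shifted by one) hold, forcing divisibility by $p$, and then that the resulting $\ul m \in M_x$ is compatible with the $mul$-consistent choices of $K^1$ and $\omega$ so that equation (2) transports correctly to level $x$. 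The other two properties are essentially a dualisation of the already-proven lemmas and should go through routinely.
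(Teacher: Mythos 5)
Your outline follows the paper's own route: $F$ is bijective on the objects of the semi-skeletons, which gives essential surjectivity, and fullness is obtained by reversing the pushout construction, the crux being that for an arbitrary $g' = (w,m') \in G_{x+1}$ inducing a morphism $\hat{A} \ra \hat{B}$ the component $\ul{m}'$ lies in the image of $mul$. You correctly identify this divisibility as the decisive point, but you leave it as "the main obstacle" to be checked rather than checking it; the paper closes it in two lines: at level $x+1$ one has $x+1+e-2m \geq m+1$, so the right-hand side of equation (*) in the proof of Theorem \ref{shift} lies in $Z^1(L,p^{m+1}T/p^{x+1+e}T) = mul\, J^{1,*}(L,M_x)$, while the left-hand side lies in $I^1(L,M_{x+1})$; hence $\ol{s}$ may be chosen in $pT$ and $\ul{m}'$ is indeed in the image of $mul$. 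Since you name exactly the exponent bounds ($p^{x+e-2m}$ for the $\omega$-maps, $p^{x+e-m}$ for $\eta_x$, $\ul{\gamma}$, $\ul{\sigma}$, $\delta$, with $e=3m$) that make this work, this part is incomplete rather than wrong.

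The genuine gap is in your faithfulness argument: the claim that $\mu_{\hat{g}_1} = \mu_{\hat{g}_2}$ forces $w_1 = w_2$ is false. Equality of the induced maps on $\hat{A}$ only gives $l^{w_1} = l^{w_2}$ for all $l \in L$, i.e.\ $w_2 w_1^{-1} \in C_P(L)$, and $C_P(L)$ is in general nontrivial (indeed much larger than $L$), so $w_1 \neq w_2$ is entirely possible. Lemma \ref{samemor} compares only elements with the same $P$-component, so your appeal to it does not cover this case and the argument as written breaks down there. To repair it you would have to handle the centralising element $c = g_2 g_1^{-1}$, whose image in $P$ lies in $C_P(L)$ but need not be trivial: for instance, show that if $\hat{g}_2\hat{g}_1^{-1}$ centralises $\hat{A}$ then the condition $\zeta_{L,L,c,x} = \gamma^c - \gamma$ of Lemma \ref{zetax}(d) holds at level $x$, using injectivity of $mul$ and the same divisibility estimate as in the fullness step --- in effect running backwards the centraliser computation carried out in the proof that $F$ is a functor. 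The paper avoids a separate faithfulness argument altogether: it observes that the level-$(x+1)$-to-level-$x$ construction inverts $\mu_g \ms \mu_{\hat{g}}$ on morphism sets, so that $Hom(A,B) \ra Hom(\hat{A},\hat{B})$ is a bijection, giving fullness and faithfulness simultaneously.
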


\begin{proof}
The functor $F$ is essentially surjective, as $F$ is a bijection on the 
objects of the underlying categories. Further, the functor $F$ is full 
and faithful, as it induces a bijection between the morphisms $Hom(A,B)$ 
with $Hom(F(A), F(B))$. More precisely, if $\mu_g$ is a morphism in 
$Hom(A,B)$, then $\mu_{\hat{g}}$ is a morphism in $Hom(F(A),F(B))$ and 
as we show below, this construction can be reversed. Hence we obtain the 
desired bijection.

To see that the construction can be reversed we take $g = (w,m) \in G_{x+1}$,
decompose $m$ as $m = pro_{x+1}(\ol{m}) + \ul{m}$, and want $\ul{m}$ to 
lie in the image of $mul$\@. In the proof of Theorem~\ref{shift} we have 
$x+1+e-2m \geq m+1$, and so the right hand side of (*) in that proof lies 
in $Z^1(L,p^{m+1}T/p^{x+1+e}T) = mul J^{1,*}(L,M_x)$. As the left hand side 
lies in $I^1(L,M_x)$, we conclude that $\ol{s}$ may be chosen to lie in 
$pT$, and hence $\ul{m}$ in the image of $mul$.
\end{proof}

\section{An example}
\label{examp}

We consider the Quillen categories for the 2-groups of coclass $1$. These
fall into 3 infinite families: the family of dihedral groups, the family
of quaternion groups and the family of semidihedral groups. All three
families are associated with the infinite pro-2-group $S$ defined by the
pro-2 presentation
\[ S = \langle a, b, t \mid a^2 = 1, b^2 = t, 
                            b^a = b t^{-1}, t^a = t^{-1}, t^b = t \rangle.\]

This infinite pro-2-group is a pro-2-analogue of the infinite dihedral
group. Note that the generator $t$ is redundant in the presentation; 
however, we use $T = \langle t \rangle = \gamma_2(S)$ as translation 
subgroup for our calculations and hence it is useful to include $t$ in
the presentation. 

\subsection{The infinite pro-2-group}

The group $P = S/T$ is abelian of order 4 and exponent 2. Let $\ol{a}, 
\ol{b}$ denote the images of $a$ and $b$ in $P$. Then $P = \langle \ol{a}, 
\ol{b} \rangle$ and as a first step we observe that $\L = \{ 
\langle 1 \rangle, \langle \ol{a} \rangle, \langle \ol{a} 
\ol{b} \rangle \}$ is the set of subgroups of $P$ which split over $T$. 
Based on this, we determine the set of objects in $\ol{\A}_p(S)$ as 
\[ \cT = \{ \langle 1 \rangle,
            \langle a \rangle, 
            \langle at \rangle, 
            \langle ab \rangle, 
            \langle abt \rangle \}. \]

We now list the morphisms in $\cT$. For each pair of objects $A$ and 
$B$ we list the morphisms $A \ra B$ by exhibiting a conjugating element.
We omit pairs $A$, $B$ with $Hom(A,B) = \emptyset$ and we omit the trivial
case $\langle 1 \rangle \ra A$ for each object $A$.

\begin{center}
\begin{tabular}{l|c}
source $\ra$ range & induced by \\
\hline
$\langle a \rangle \ra \langle a \rangle$ & $1$ \\
\hline
$\langle a \rangle \ra \langle at \rangle$ & $b$ \\
\hline
$\langle at \rangle \ra \langle at \rangle$ & $1$ \\
\hline
$\langle at \rangle \ra \langle a \rangle$ & $b t^{-1}$ \\
\hline
$\langle ab \rangle \ra \langle ab \rangle$ & $1$ \\
\hline
$\langle ab \rangle \ra \langle abt \rangle$ & $b$ \\
\hline
$\langle abt \rangle \ra \langle abt \rangle$ & $1$ \\
\hline
$\langle abt \rangle \ra \langle ab \rangle$ & $b t^{-1}$ \\
\hline
\end{tabular}
\end{center}

\subsection{The family of dihedral groups}

This family is defined by $\eta = 0$ and is described by the presentation
\[ G_x = \langle a, b, t \mid a^2 = 1, b^2 = t, t^{2^{x+2}} = 1,
                            b^a = b t^{-1}, t^a = t^{-1}, t^b = t \rangle.\]
Thus $G_x$ has order $2^{x+4}$. Let $M_x = \langle t \rangle$ of order 
$2^{x+2}$ and $O = \langle t^{2^{x+1}} \rangle = Z(G_x)$ of order $2$.
We note that $\L_\eta = \L$ and the set of objects in $\ol{\A}_p(G_x)$ is 
given by
\[ \cT_x = \{ \langle 1 \rangle,
              \langle a \rangle, 
              \langle at \rangle, 
              \langle ab \rangle, 
              \langle abt \rangle,
              O, 
              \langle a \rangle \times O, 
              \langle at \rangle \times O, 
              \langle ab \rangle \times O, 
              \langle abt \rangle \times O \}. \]

We list the morphisms between the objects in $\cT_x$ in the same format
as for $\cT$. We omit the trivial cases $\langle 1 \rangle \ra A$ and 
$O \ra A \times O$. We collect those cases on $Hom(A,B)$ together which 
have the same $Hom(\ol{A}, \ol{B})$ to facilitate an easy comparison of
liftings. 

\begin{center}
\begin{tabular}{l|c||l|c}
source $\ra$ range & induced by & 
source $\ra$ range & induced by \\
\hline
$\langle a \rangle 
    \ra \langle a \rangle$ & $1$ &
$\langle a \rangle 
    \ra \langle a \rangle \times O$ & $1$, $t^{2^x}$ \\
&& $\langle a \rangle \times O
    \ra \langle a \rangle \times O$ & $1$, $t^{2^x}$ \\
\hline

$\langle a \rangle 
    \ra \langle at \rangle$ & $b$ &
$\langle a \rangle 
    \ra \langle at \rangle \times O$ & $b$, $bt^{2^x}$ \\
&& $\langle a \rangle \times O
    \ra \langle at \rangle \times O$ & $b$, $b t^{2^x}$ \\
\hline

$\langle at \rangle 
    \ra \langle at \rangle$ & $1$ &
$\langle at \rangle 
    \ra \langle at \rangle \times O$ & $1$, $t^{2^x}$ \\
&& $\langle at \rangle \times O
    \ra \langle at \rangle \times O$ & $1$, $t^{2^x}$ \\
\hline

$\langle at \rangle 
    \ra \langle a \rangle$ & $b t^{-1}$ &
$\langle at \rangle 
    \ra \langle a \rangle \times O$ & $b t^{-1}$, $bt^{2^x-1}$ \\
&& $\langle at \rangle \times O
    \ra \langle a \rangle \times O$ & $b t^{-1}$, $bt^{2^x-1}$ \\
\hline

$\langle ab \rangle 
    \ra \langle ab \rangle$ & $1$ &
$\langle ab \rangle 
    \ra \langle ab \rangle \times O$ & $1$, $t^{2^x}$ \\
&& $\langle ab \rangle \times O
    \ra \langle ab \rangle \times O$ & $1$, $t^{2^x}$ \\
\hline

$\langle ab \rangle 
    \ra \langle abt \rangle$ & $b$ &
$\langle ab \rangle 
    \ra \langle abt \rangle \times O$ & $b$, $bt^{2^x}$ \\
&& $\langle ab \rangle \times O
    \ra \langle abt \rangle \times O$ & $b$, $bt^{2^x}$ \\
\hline

$\langle abt \rangle 
    \ra \langle abt \rangle$ & $1$ &
$\langle abt \rangle 
    \ra \langle abt \rangle \times O$ & $1$, $t^{2^x}$ \\
&& $\langle abt \rangle \times O
    \ra \langle abt \rangle \times O$ & $1$, $t^{2^x}$ \\
\hline

$\langle abt \rangle 
    \ra \langle ab \rangle$ & $b t^{-1}$ &
$\langle abt \rangle 
    \ra \langle ab \rangle \times O$ & $b t^{-1}$, $bt^{2^x-1}$ \\
&& $\langle abt \rangle \times O
    \ra \langle ab \rangle \times O$ & $b t^{-1}$, $b t^{2^x-1}$ \\
\hline
\end{tabular}
\end{center}

In this example the functor $F_S$ is essentially surjective. Further, for 
every two objects
$A$ and $B$ in $\ol{\A}_p(G_x)$, the map $Hom(A,B) \ra Hom(\ol{A}, \ol{B})$ 
induced by $F_S$ is surjective, but not necessarily injective. Hence
$F_S$ is full, but not faithful. 

\subsection{The family of semidihedral groups}

This family has $\eta \neq 0$ and is described by the presentation
\[ G_x = \langle a, b, t \mid a^2 = 1, b^2 = t, t^{2^{x+2}} = 1,
                            b^a = b t^{2^{x+e-1}-1}, t^a = t^{-1}, t^b = t 
         \rangle.\]

Thus $G_x$ has order $2^{x+4}$. Let $M_x = \langle t \rangle$ of order 
$2^{x+2}$ and $O = \langle t^{2^{x+1}} \rangle = Z(G_x)$ of order $2$.
In this example we find that $\L_\eta = \{ \langle 1 \rangle, \langle 
\ol{a} \rangle \}$. The set of objects in $\ol{\A}_p(G_x)$ is given by
\[ \cT_x = \{ \langle 1 \rangle,
            \langle a \rangle, 
            \langle at \rangle, 
            O, 
            \langle a \rangle \times O, 
            \langle at \rangle \times O \}. \]

We list the morphisms between the objects in $\cT_x$ in the same format
as for the dihedral groups. Again, we omit the trivial cases $\langle 1 
\rangle \ra A$ and $O \ra A \times O$. 

\begin{center}
\begin{tabular}{l|c||l|c}
source $\ra$ range & induced by & 
source $\ra$ range & induced by \\
\hline
$\langle a \rangle 
    \ra \langle a \rangle$ & $1$ &
$\langle a \rangle 
    \ra \langle a \rangle \times O$ & $1$, $t^{2^x}$ \\
&& $\langle a \rangle \times O
    \ra \langle a \rangle \times O$ & $1$, $t^{2^x}$ \\
\hline
$\langle a \rangle 
    \ra \langle at \rangle$ & $b t^{2^x}$ &
$\langle a \rangle 
    \ra \langle at \rangle \times O$ & $b$, $bt^{2^x}$ \\
&& $\langle a \rangle \times O
    \ra \langle at \rangle \times O$ & $b$, $bt^{2^x}$ \\
\hline
$\langle at \rangle 
    \ra \langle at \rangle$ & $1$ &
$\langle at \rangle 
    \ra \langle at \rangle \times O$ & $1$, $t^{2^x}$ \\
&& $\langle at \rangle \times O
    \ra \langle at \rangle \times O$ & $1$, $t^{2^x}$ \\
\hline
$\langle at \rangle 
    \ra \langle a \rangle$ & $b t^{-1}$ &
$\langle at \rangle 
    \ra \langle a \rangle \times O$ & $b t^{-1}$, $bt^{2^x-1}$ \\
&& $\langle at \rangle \times O
    \ra \langle a \rangle \times O$ & $b t^{-1}$, $bt^{2^x-1}$ \\
\hline
\end{tabular}
\end{center}

Hence in this case the functor $F_S$ is not essentially surjective, as 
$\L_\eta \neq \L$.
As in the example of the dihedral group, the functor $F_S$ is full, but
not faithful.

\subsection{The family of quaternion groups}

This family has $\eta \neq 0$ and is described by the presentation
\[ G_x = \langle a, b, t \mid a^2 = t^{2^{x+e-1}}, b^2 = t, t^{2^{x+2}} = 1,
                            b^a = b t^{-1}, t^a = t^{-1}, t^b = t 
         \rangle.\]

Thus $G_x$ has order $2^{x+4}$. Let $M_x = \langle t \rangle$ of order 
$2^{x+2}$ and $O = \langle t^{2^{x+1}} \rangle = Z(G_x)$ of order $2$.
In this example we find that $\L_\eta = \{ \langle 1 \rangle \}$ and the
set of objects in $\ol{\A}_p(G_x)$ is given by
\[ \cT_x = \{ \langle 1 \rangle, O \}. \]

Hence $\ol{\A}_p(G_x)$ contains only the trivial morphism $\langle 1 \rangle
\ra O$ in this case.

\end{document}